\theoremstyle{plain}
\newtheorem{thm}{Theorem}[section]
\newtheorem{lemma}[thm]{Lemma}
\newtheorem{prop}[thm]{Proposition}
\newtheorem{cor}[thm]{Corollary}
\theoremstyle{definition}
\newtheorem{example}[thm]{Example}
\newtheorem{defn}[thm]{Definition}
\newtheorem{remark}[thm]{Remark}
\theoremstyle{remark}
\newtheorem*{ack}{Acknowledgment}
\numberwithin{equation}{section}
\def\cC{\mathcal{C}}
\def\FF{\mathbb{F}}
\def\RR{\mathbb{R}}
\def\ZZ{\mathbb{Z}}
\def\fc{\mathfrak{c}}
\def\fo{\mathfrak{o}}
\def\fr{\mathfrak{r}}
\def\fs{\mathfrak{s}}
\def\ft{\mathfrak{t}}
\def\bt{\mathbf{t}}
\def\Aut{\mathrm{Aut}}
\newcommand{\vect}[1]{\boldsymbol{#1}}
\def\la{\lambda}
\colorlet{lightgray}{black!20}
\renewcommand{\@makefnmark}{\mbox{\textsuperscript{}}}
\title{Regular sequences and random walks in affine buildings}
\author{
J. Parkinson \and W. Woess
}
\date{\today}
\begin{document}

\maketitle

\begin{abstract}
We define and characterise regular sequences in affine buildings, thereby giving the $p$-adic analogue of the fundamental work of Kaimanovich~on regular sequences in symmetric spaces. As applications we prove limit theorems for random walks on affine buildings and their automorphism groups.
\let\thefootnote\relax
\footnote{2010 Mathematics Subject Classification: 20E42, 51E24, 05C81, 60J10.}
\footnote{Keywords: Affine building, CAT(0), multiplicative ergodic theorem, 
random walks, regular sequences.}
\footnote{Supported by Austrian Science Fund projects FWF-P19115, FWF-P24028 and FWF-W1230}
\footnote{and by Australian Research Council grant DP110103205.
}
\end{abstract}

\section*{Introduction}

The celebrated \textit{Multiplicative Ergodic Theorem} of Oseledets \cite{O} establishes 
conditions for Lyapunov regularity of random real matrices. In particular, under 
a finite first moment assumption the product of independent identically distributed real 
random matrices behaves ``asymptotically'' like the sequence of powers of some fixed positive 
definite symmetric matrix~$\Lambda$, and the Lyapunov exponents are the logarithms of the 
eigenvalues of~$\Lambda$.

Let $S=GL_r(\RR)/O_r(\RR)$ be the symmetric space associated with $GL_r(\mathbb{R})$, and 
let $g_1,g_2,\ldots$ be a stationary sequence in $GL_r(\RR)$ with finite first moment. 
Kaimanovich \cite{kaimanovich} observed that Lyapunov regularity of the sequence 
$(g_1\cdots g_n)_{n\geq 1}$ of products is equivalent to the existence of a unit speed 
geodesic $\gamma:[0,\infty)\to S$ in $S$ and a number $a\geq 0$ such that 
\begin{align*}
\lim_{n\to\infty}\frac{d\bigl(x_n,\gamma(an)\bigr)}{n}=0,\quad
\textrm{where $\; x_n=g_1\cdots g_no\;$ with  $\;o=O_r(\mathbb{R})\,$, 
the basepoint of $\;S$}.
\end{align*}
This lead naturally to the definition of \textit{regularity} of a sequence in the symmetric 
space of an arbitrary Lie type group: A sequence $(x_n)_{n\geq 0}$ 
in $S$ is \textit{regular} if there is a unit speed geodesic $\gamma:[0,\infty)\to S$ and 
a number $a\geq 0$ such that $d\bigl(x_n,\gamma(an)\bigr)=o(n)$. 
In \cite[Theorems~2.1 and~2.4]{kaimanovich}, Kaimanovich obtains a complete characterisation 
of regular sequences in terms of spherical coordinates and horospheric coordinates 
in the symmetric space. As a consequence, Kaimanovich obtains a Multiplicative Ergodic 
Theorem for noncompact semisimple real Lie groups with finite centre, generalising 
Oseledets' Theorem to arbitrary Lie type.

A striking feature of Kaimanovich's analysis is that it is entirely geometric in nature, 
converting the statement about Lyapunov regularity of matrices to the geometric statement 
about being sublinearly close to a geodesic in the associated symmetric space. 
These ideas have since been extended much further. In \cite[Theorem~2.1]{KM}, Karlsson and 
Margulis prove a Multiplicative Ergodic Theorem that applies to random walks on the isometry 
group of a uniformly convex, Busemann nonpositively curved, complete metric space 
(for example, a $\mathrm{CAT}(0)$ space). In \cite{KL}, Karlsson and Ledrappier give 
a general version of the Multiplicative Ergodic Theorem in terms of Busemann functions.

Our main aim here is to carry Kaimanovich's characterisation of regular sequences 
across to the $p$-adic case of semisimple Lie groups over non-archimedean local fields. 
In this setting the symmetric space is replaced by the \textit{affine Bruhat-Tits building} 
of the group. In fact we will work more generally with arbitrary affine buildings~$\Delta$ 
(including those that do not arise from group constructions, c.f. Ronan \cite{ronanconstruction}). 
Affine buildings are $\mathrm{CAT}(0)$ spaces, and by analogy with the symmetric space case 
we define a sequence $(x_n)_{n\geq 0}$ in $\Delta$ to be \textit{regular} if there exist a 
unit speed geodesic $\gamma:[0,\infty)\to \Delta$ and a number $a\geq 0$ such that
$$
d\bigl(x_n,\gamma(an)\bigr)=o(n).
$$
In Theorem~\ref{thm:main} we give a characterisation of regular sequences in affine buildings 
in the spirit of Kaimanovich's original symmetric space characterisation. 
This characterisation is in terms of a natural \textit{vector distance} in the building 
(the analogue of spherical coordinates in the symmetric space), and in terms of a 
\textit{vector Busemann function} (the analogue of horospheric coordinates in the 
symmetric space). These results can also be seen as extensions into higher rank of 
results of Cartwright, Kaimanovich and Woess \cite{CKW}, where regular sequences in 
trees are studied (these are the simplest affine buildings).

As applications of our characterisation we prove limit theorems and convergence theorems 
for the right random walk on the automorphism group of an affine building. In particular, 
these results apply to `groups of $p$-adic type' (see Macdonald \cite{macsph}). We also give 
limit theorems and convergence theorems for \textit{semi-isotropic} random walks 
on the buildings themselves. In this case there is not necessarily any underlying group 
structure. For example, by a free construction of Ronan \cite{ronanconstruction}, there 
are $\tilde{A}_2$ buildings with trivial automorphism group. 
However we note that by a fundamental theorem of Tits \cite{T} (see also Weiss \cite{weiss}), 
all irreducible affine buildings of rank $4$ or more are ``classical'' and arise from a 
group construction. 

Regarding random walks on $p$-adic groups and affine buildings we mention the works 
of Sawyer \cite{ss}, Cartwright, Kaimanovich and Woess \cite{CKW}, and Brofferio \cite{SB} concerning trees,
and in higher dimension the works of Tolli \cite{tolli}, Lindlbauer and Voit \cite{LV}, 
Cartwright and Woess \cite{CW}, Parkinson~\cite{P3}, Shapira~\cite{BS}, and Parkinson and Shapira~\cite{PS}.
Further references can be found in those papers.

This paper is organised as follows. In Section~\ref{sect:background} we give the relevant background on Coxeter groups and the Coxeter complex. In Section~\ref{sect:buildings} we give background on affine buildings, and we define vector distances and vector Busemann functions in the affine building. In Section~\ref{sect:regularity} we define regular sequences in affine buildings, and present our main theorem characterising regular sequences in the spirit of Kaimanovich's original characterisation for symmetric spaces. In Section~\ref{sect:applications} we give applications of our main result to random walks on $p$-adic Lie groups and affine buildings. 

\begin{ack} We would like to warmly thank Vadim Kaimanovich for helpful discussions 
and suggestions on this paper. 
\end{ack}

\section{Coxeter groups and the Coxeter complex}\label{sect:background}

\textit{Coxeter groups} form the backbone of the more sophisticated \textit{buildings} 
which are the subject of this paper. In this section we give some relevant background
 on Coxeter groups, focussing on the special case of \textit{affine Weyl groups}.

\subsection{Affine Coxeter groups}\label{sect:roots}

A \textit{Coxeter system} $(W,S)$ is a group $W$ generated by a set $S$ with relations
$$
s^2=1\qquad\textrm{and}\qquad (st)^{m_{st}}=1\qquad\textrm{for all $\;s,t\in S\;$ with $\;s\neq t\,$,}
$$ 
where $m_{st}=m_{ts}\in\ZZ_{\geq 2}\cup\{\infty\}$ for all $s\neq t$. As is standard, 
we often say that $W$ is a \textit{Coxeter group} when the generating set $S$ is implied. 
The \textit{length} of $w\in W$ is
$$
\ell(w)=\min\{n\geq 0\mid w=s_1\cdots s_n\textrm{ with }s_1,\ldots ,s_n\in S\}.
$$
An expression $w=s_1\cdots s_n$ with $n=\ell(w)$ is called a \textit{reduced expression} 
for~$w$. A Coxeter system $(W,S)$ is \textit{irreducible} if there is no partition 
$S=S_1\cup S_2$ into nonempty disjoint sets $S_1$ and $S_2$ with $ss'=s's$ 
(that is, $m_{ss'}=2$) for all $s\in S_1$ and $s'\in S_2$. 

A Coxeter group~$W$ is \textit{affine} if it is not finite, but contains a normal Abelian 
subgroup such that the corresponding quotient group is a finite group. All irreducible 
affine Coxeter groups can be constructed as an \textit{affine Weyl group} associated to 
a \textit{root system}. This construction realises the affine Coxeter group as a group 
of reflections in (affine) hyperplanes in an Euclidean space. We outline this explicit 
construction below (the canonical reference is Bourbaki~\cite{bourbaki}). 

Let $E$ be an $r$-dimensional real vector space with inner product $\langle\cdot,\cdot\rangle$. 
If $\alpha\in E\backslash\{0\}$ and $k\in\RR$ let
$
H_{\alpha,k}=\{x\in E\mid \langle x,\alpha\rangle=k\}.
$
This affine hyperplane is parallel to the linear hyperplane 
$H_{\alpha}=H_{\alpha,0}=(\mathbb{R}\alpha)^{\perp}$. The orthogonal reflection 
in the hyperplane $H_{\alpha,k}$ is given by the formula
$$
s_{\alpha,k}(x)=x-(\langle x,\alpha\rangle-k)\alpha^{\vee},\qquad
\textrm{where \quad$\alpha^{\vee}=2\alpha/\langle\alpha,\alpha\rangle$}.
$$
Write $s_{\alpha}=s_{\alpha,0}$, and for $\lambda\in E$ let 
$t_{\lambda}\in\mathrm{Aff}(E)$ be the translation $t_{\lambda}(x)=x+\lambda$.

Let $R$ be a \textit{root system} in~$E$ (see \cite{bourbaki}). Thus $R$ is a finite 
set of non-zero vectors such that (i) $R$ spans $E$, (ii) if $\alpha\in R$ and 
$k\alpha\in R$ then $k=\pm 1$, (iii) if $\alpha,\beta\in R$ then $s_{\alpha}(\beta)\in R$, 
and (iv) if $\alpha,\beta\in R$ then $\langle \alpha,\beta^{\vee}\rangle\in \mathbb{Z}$. 
Assume further that $R$ is \textit{irreducible}, and so there is no partition 
$R=R_1\cup R_2$ with $R_1$ and $R_2$ nonempty such that $\langle\alpha,\beta\rangle=0$ 
for all $\alpha\in R_1$ and all $\beta\in R_2$. Let $\alpha_1,\ldots,\alpha_r\in R$ be 
a fixed choice of \textit{simple roots}. Thus every $\alpha\in R$ can be written as a 
linear combination of $\alpha_1,\ldots,\alpha_r$ with integer coefficients which are 
either all nonpositive, or all nonnegative. Those roots whose coefficients are all 
nonnegative are called \textit{positive roots}, and the set of all positive roots 
is denoted~$R^+$. Then $R=R^+\cup(-R^+)$.

The \textit{Weyl group} of $R$ is the subgroup $W_0$ of $\mathrm{GL}(E)$ generated 
by the reflections $s_{\alpha}$ with $\alpha\in R$. Let $s_j=s_{\alpha_j}$ for $j=1,\ldots,r$. 
The group $W_0$ is generated by the reflections $s_1,\ldots,s_r$, and the order of 
the product $s_is_j$ is $m_{ij}$, where the angle between $\alpha_i$ and $\alpha_j$ 
is $\pi-\pi/m_{ij}$. Thus $W_0$ is a finite Coxeter group relative to the generators 
$s_1,\ldots,s_r$. There is a unique element $w_0\in W_0$ of maximal length 
(the \textit{longest element} of~$W_0$).

The \textit{affine Weyl group} of $R$ is the subgroup $W$ of $\mathrm{Aff}(E)$ generated 
by the reflections $s_{\alpha,k}$ with $\alpha\in R$ and $k\in\ZZ$. 
Let $\varphi\in R$ be the \textit{highest root} of $R$ (the \textit{height} of the 
root $\alpha=a_1\alpha_1+\cdots+a_r\alpha_r$ is $\mathrm{ht}(\alpha)=a_1+\cdots+a_r$, 
and $\varphi$ is the unique root with greatest height). Let $s_0=s_{\varphi,1}$. 
The group $W$ is generated by the reflections $s_0,s_1,\ldots,s_r$, and the order 
of $s_0s_j$ is $m_{0j}$, where the angle between $\varphi$ and $\alpha_j$ is $\pi-\pi/m_{0j}$. 
Thus $W$ is an infinite Coxeter group relative to the generators $s_0,s_1,\ldots,s_r$. 
Since $s_{\alpha,k}=t_{k\alpha^{\!\vee}}\,s_{\alpha}$ we have
$$
W=Q\rtimes W_0,\qquad\textrm{where}\qquad Q=\ZZ\alpha_1^{\vee}+\cdots+\ZZ\alpha_r^{\vee}\,.
$$
Thus $W$ is an affine Coxeter group. All irreducible affine Coxeter groups arise in this 
way for some choice of root system. The lattice $Q$ is the \textit{coroot lattice} of $R$. 
Let $\omega_1,\ldots,\omega_r$ be the basis of $E$ dual to $\alpha_1,\ldots,\alpha_r$ 
(so $\langle\omega_i,\alpha_j\rangle=\delta_{ij}$). The \textit{coweight lattice} of $R$ is 
$$
P=\ZZ\omega_1+\cdots+\ZZ\omega_r=
\{\lambda\in E\mid \langle\lambda,\alpha\rangle\in\ZZ \;\textrm{for all $\alpha\in R$}\},
$$
and elements of $P$ are called \textit{coweights} of $R$ (with $\omega_1,\ldots,\omega_r$ 
being \textit{fundamental coweights}). We have $Q\subseteq P$, and for any lattice $L$ 
with $Q\subseteq L\subseteq P$ we define a group $W_L=L\rtimes W_0.$
In particular $W_Q=W$, however in general $W_L$ is not a Coxeter group. 
We have $W_L=W\rtimes (L/Q)$ with $L/Q$ finite and Abelian. We call $W_L$ an 
\textit{extended affine Weyl group}.

\subsection{The Coxeter complex}\label{sect:coxcomp}

Let $R$ be an irreducible root system with Weyl group~$W_0$, affine Weyl group~$W$, coroot 
lattice~$Q$, and coweight lattice~$P$ (as in the previous section). Let $\mathscr{H}$ be 
the family of hyperplanes $H_{\alpha,k}$ with $\alpha\in R^+$ and $k\in\mathbb{Z}$. 
The closures of the open connected components of $E\backslash \mathscr{H}$ are geometric 
simplices of dimension~$r$. The extreme points of these geometric simplices are 
\textit{vertices}, and the resulting simplicial complex $\Sigma$ is the 
\textit{Coxeter complex of~$W$}. We will rather freely interchange between regarding 
$\Sigma$ as an ``abstract'' simplicial complex (cf. Abramenko and Brown \cite{AB}) and a 
``geometric'' simplicial complex (embedded in the Euclidean space~$E$). The extended affine 
Weyl groups $W_L$ act on  $\Sigma$ by simplicial complex automorphisms. Thus, in particular, 
each $\lambda\in P$ is a vertex of~$\Sigma$, however in general $P$ is a strict subset of the vertex set. The vertices $\lambda\in P$ are called the \textit{special vertices} 
of~$\Sigma$. 

The maximal dimensional simplices of~$\Sigma$ are called \textit{chambers} (thus, each 
chamber has exactly $r+1$ vertices). The \textit{fundamental chamber} is the geometric simplex
$$
\fc_0=\{x\in E\mid \langle x,\alpha_i\rangle\geq 0\;\textrm{for all}\; i=1, \dots, r \;
\textrm{and}\;\langle x,\varphi\rangle\leq 1\},
$$
where $\varphi$ is the highest root of~$R$. This chamber is bounded by the hyperplanes 
$H_{\alpha_i}$ ($1\leq i\leq r$) and $H_{\varphi,1}$. The affine Weyl group $W$ acts simply 
transitively on the set of chambers of~$\Sigma$, and we usually identify the chambers 
with~$W$ (by $w\fc_0\leftrightarrow w$).

The vertex set of $\fc_0$ is
$
\{0\}\cup\{\omega_i/m_i\mid 1\leq i\leq r\},
$
where $\varphi=m_1\alpha_1+\cdots+m_r\alpha_r$ is the highest root. Define the \textit{type} 
of the vertex $0$ to be $\tau(0)=0$ and the type of the vertex $\omega_i/m_i$ to be 
$\tau(\omega_i/m_i)=i$ for $1\leq i\leq r$. This extends uniquely to a type function 
$\tau:\Sigma\to 2^{\{0,1,\ldots,r\}}$ making $\Sigma$ into a \textit{labelled simplicial complex} 
(where each chamber has exactly one vertex of each type). The type of a simplex $\sigma\in\Sigma$ 
is $\tau(\sigma)=\{\tau(x)\mid x\in \sigma\}$. The action of $W$ on $\Sigma$ preserves types, 
and $Q$ is the set of all type~$0$ vertices of~$\Sigma$.

The hyperplanes in $\mathscr{H}$ are called the \textit{walls} of $\Sigma$. Each wall 
$H_{\alpha,k}$ of $\Sigma$ determines two \textit{half apartments}, 
$H_{\alpha,k}^+=\{x\in E\mid \langle x,\alpha\rangle\geq k\}$ and 
$H_{\alpha,k}^-=\{x\in E\mid \langle x,\alpha\rangle\leq k\}$ (the term ``apartment'' comes 
from the building language of the next section). 

The \textit{fundamental sector} of $\Sigma$ is the geometric cone
$$
\fs_0=\bigcap_{i=1}^rH_{\alpha_i,0}^+=\{x\in E\mid \langle x,\alpha_i\rangle\geq 0\;
\textrm{for all}\; i=1,\dots, r\}.
$$
Points $x\in\fs_0$ are called \textit{dominant}, and it is convenient at times to 
write $E^+=\fs_0$. Any set of the form $w\fs_0$ with $w\in W_P$ is called a \textit{sector} 
of $\Sigma$. The \textit{base vertex} of $\fs_0$ is $0$, and the \textit{sector panels} 
of $\fs_0$ are the half lines $H_{\alpha_i}\cap \fs_0=\RR_{\geq 0}\,\omega_i\,$, 
$1\leq i\leq r$. The base vertex and sector panels of $w\fs_0$ are the translates of 
those for~$\fs_0$, and so in particular sectors are always based at special vertices. 
Two sectors are \textit{adjacent} if they have the same base vertex and share a sector panel. 
Since the group $W_0$ acts simply transitively on the set of sectors with base vertex~$0$, for 
each $\mu\in E$ there is a unique dominant element $\mu^+\in E^+$ in the orbit $W_0\mu$. 

\newpage

\begin{example} Figure~\ref{fig:C2} shows the $\tilde{C}_2$ case. 
\begin{figure}[!h]
\begin{minipage}[t]{0.45\linewidth}
\vspace{0pt}
\begin{center}
\begin{tikzpicture}[scale=1.35]
\path [fill=lightgray] (0,0) -- (2.25,0) -- (2.25,2.25) -- (0,0);
\draw (-2.25,-2) -- (2.25,-2); 
\draw (-2.25,-1) -- (2.25,-1);
\draw (-2.25,0) -- (2.25,0);
\draw (-2.25,1) -- (2.25,1);
\draw (-2.25,2) -- (2.25,2);
\draw (-2,-2.25) -- (-2,2.25);
\draw (-1,-2.25) -- (-1,2.25);
\draw (0,-2.25) -- (0,2.25);
\draw (1,-2.25) -- (1,2.25);
\draw (2,-2.25) -- (2,2.25);
\draw (-2.25,1.75) -- (-1.75,2.25);
\draw (-2.25,-0.25) -- (0.25,2.25);
\draw (-2.25,-2.25) -- (2.25,2.25);
\draw (-0.25,-2.25) -- (2.25,0.25);
\draw (1.75,-2.25) -- (2.25,-1.75);
\draw (-2.25,-1.75) -- (-1.75,-2.25);
\draw (-2.25,0.25) -- (0.25,-2.25);
\draw (-2.25,2.25) -- (2.25,-2.25);
\draw (-0.25,2.25) -- (2.25,-0.25);
\draw (1.75,2.25) -- (2.25,1.75);
\node at (0,0) {\Large{$\bullet$}};
\node at (-2,0) {\Large{$\bullet$}};
\node at (2,0) {\Large{$\bullet$}};
\node at (-2,-2) {\Large{$\bullet$}};
\node at (0,-2) {\Large{$\bullet$}};
\node at (2,-2) {\Large{$\bullet$}};
\node at (0,2) {\Large{$\bullet$}};
\node at (-2,2) {\Large{$\bullet$}};
\node at (2,2) {\Large{$\bullet$}};
\node at (-1,1) {\large{$\Box$}};
\node at (1,1) {\large{$\Box$}};
\node at (-1,-1) {\large{$\Box$}};
\node at (1,-1) {\large{$\Box$}};
\node at (1.5,-1.8) {\small{$\alpha_1^{\vee}$}};
\node at (0.5,1.8) {\small{$\alpha_2^{\vee}$}};
\node at (0.65,0.25) {\small{$\mathfrak{c}_0$}};
\node at (1.6,0.15) {\small{$\omega_1$}};
\node at (1.4,1.15) {\small{$\omega_2$}};
\end{tikzpicture}
\end{center}
\end{minipage}\hfill
\begin{minipage}[t]{0.53\linewidth}
\vspace{0pt}
Let $R=\pm\{e_1-e_2,e_1+e_2,2e_1,2e_2\}$ be a root system of type $C_2$. 
The roots $\alpha_1=e_1-e_2$ and $\alpha_2=2e_2$ are simple roots. 
Thus $\alpha_1^{\vee}=e_1-e_2$ and $\alpha_2^{\vee}=e_2$ (as shown). 
The fundamental coweights are $\omega_1=e_1$ and $\omega_2=\frac{1}{2}(e_1+e_2)$. 
The highest root is $2\alpha_1+\alpha_2$, and the vertices of $\fc_0$ are 
$\{0,\omega_1/2,\omega_2\}$. The type $0$ vertices (respectively type $2$ vertices) 
are marked with $\bullet$ (respectively $\Box$). The remaining vertices have type~$1$, 
and are not special vertices. The fundamental sector $\fs_0$ is shaded. The coroot 
lattice $Q$ is the set of type $0$ vertices and the coweight lattice is the set of 
all special vertices.
\end{minipage}
\caption{The $\tilde{C}_2$ Coxeter complex}\label{fig:C2}
\end{figure}
\end{example}

\section{Affine buildings}\label{sect:buildings}

The theory of \textit{buildings} grew from the fundamental work of Jacques Tits starting 
in the 1950s. The initial impetus was to give a uniform description of semisimple Lie groups 
and algebraic groups by associating a geometry to each such group. This ``geometry of 
parabolic subgroups'' later became known as the \textit{building} of the group \cite{titsbook}. 
Since their invention, buildings and related geometries have enjoyed extensive study and 
development, and have found applications in many areas of mathematics; see Abramenko and Brown 
\cite{AB}, Ronan \cite{ronan}, and the survey by Ji \cite{ji}. Our main reference for this 
section is~\cite{AB}.

We begin with the definition of an affine building, along with some basic definitions from 
the theory of buildings. After this we define vector distances in the affine building, 
and then we introduce and develop the theory of vector valued Busemann functions on the affine 
building.

\subsection{Definitions}\label{sect:defns}

We adopt Tits' definition of buildings as simplicial complexes satisfying certain axioms 
(cf.~\cite{AB}). Recall that a \textit{simplicial complex} with vertex set $V$ is a 
collection $\Delta$ of finite subsets of $V$ (called \textit{simplices}) such that for every 
$v\in V$, the singleton $\{v\}$ is a simplex, and every subset of a simplex $\sigma$ is 
a simplex (a \textit{face} of $\sigma$). If $\sigma$ is a simplex which is not a proper subset 
of any other simplex then $\sigma$ is a \textit{chamber} of~$\Delta$. 

\begin{defn}\label{defn:building} Let $(W,S)$ be an irreducible affine Coxeter system 
with Coxeter complex $\Sigma$. A \textit{building of type $(W,S)$} is a nonempty simplicial 
complex $\Delta$ which is the union of subcomplexes called \textit{apartments}, each isomorphic 
to~$\Sigma$, such that
\begin{enumerate}
\item[(B1)] given any two simplices of $\Delta$ there is an apartment containing both of them, and
\item[(B2)] if apartments $A,A'$ of $\Delta$ contain a common chamber then there is a unique simplicial 
complex isomorphism $\psi:A\to A'$ fixing every vertex of $A\cap A'$.
\end{enumerate}
\end{defn}

Fix, once and for all, an apartment of $\Delta$ and identify it with $\Sigma$. Thus we 
regard $\Sigma$ as an apartment of $\Delta$, the ``standard apartment''. Let 
$I=\{0,1,\ldots,r\}$. The type function on $\Sigma$ extends uniquely to a 
type function $\tau:\Delta\to 2^I$ making $\Delta$ into a labelled simplicial complex. 
The isomorphism in the second building axiom is necessarily type preserving.

The cardinality $|\sigma|$ of a simplex $\sigma\in \Delta$ is called the \textit{dimension} 
of the simplex, and the \textit{codimension} of $\sigma$ is $r+1-|\sigma|$. The chambers are 
the maximal dimensional simplices, and a \textit{panel} is a codimension~$1$ simplex. 
Chambers $c$ and $d$ of $\Delta$ are \textit{$i$-adjacent} (written $c\sim_i d$) if they 
share a panel~$\pi$ of type $I\backslash\{i\}$. A \textit{gallery of type $(i_1,\ldots,i_n)$} 
from $c$ to $d$ is a sequence of chambers
$$
c=c_0\sim_{i_1}c_1\sim_{i_2}\cdots\sim_{i_n}c_n=d\quad
\textrm{with $\;c_{k-1}\neq c_k\;$ for $\;1\leq k\leq n\,$}.
$$
This gallery has minimal length amongst all galleries from $c$ to $d$ if and only if 
$s_{i_1}\cdots s_{i_n}$ is a reduced expression in~$W$.

A building is called \textit{thick} if every panel is contained in at least~$3$ chambers. 
For most of this paper we do not explicitly require $\Delta$ to be thick, but we note that 
thick buildings are certainly the most interesting buildings.

As a minor technical point, we will always take the \textit{complete} apartment system of 
$\Delta$. Thus every subcomplex of $\Delta$ which is isomorphic to $\Sigma$ is taken to be 
an apartment of~$\Delta$. See \cite[\S4.5]{AB}. The definitions of walls, sectors, sector 
panels, and half apartments of~$\Sigma$ transfer across to the building $\Delta$. For 
example, a \textit{sector} of $\Delta$ is a subset of $\Delta$ which is a sector in 
some apartment of $\Delta$, and so on.

\begin{example} Figure~\ref{fig:piece} shows a simplified picture of an affine $\tilde{A}_2$ 
building. If the building is thick then the branching must actually occur along \textit{every} 
wall, and so the picture is rather incomplete. The chambers are the triangles, and the 
panels are the edges of the triangles. There are $6$ apartments shown (for example, the 
``horizontal'' sheet), and a sector is shaded. 
\begin{figure}[h]
\begin{center}
\begin{tikzpicture}[scale=0.6]
\path [fill=lightgray] (-6,0) -- (-4,-2) -- (-2,0);
\path [fill=lightgray] (5,-3) -- (-3,-3) -- (-3.4,-2.6) -- (-3,-2.5) -- (-1,-2.5) -- (1,-2) -- (3,-2) -- (5,-1.5) -- (6.5,0) -- (8,0) -- (5,-3);
\draw (-5,-3) -- (-3,-2.5) -- (-1,-2.5) -- (1,-2) -- (3,-2) -- (5,-1.5) -- (10,3.5) -- (8,3) -- (6,3) -- (4,2.5) -- (2,2.5) -- (0,2) -- (-6,2) -- (-11,-3) -- (5,-3) -- (10,2) -- (8.5,2);
\draw (-7,-3) -- (-9,-4) -- (-11,-4.5) -- (-13,-4.5) -- (-8,0.5) -- (-7.5,0.5);
\draw (-9,-4) -- (-8.1,-3.1);
\draw (-5,-3) -- (0,2);
\draw (-3,-2.5) -- (2,2.5);
\draw (-1,-2.5) -- (4,2.5);
\draw (1,-2) -- (6,3);
\draw (-9,-3) -- (-4,2);
\draw (-7,-3) -- (-2,2);
\draw (-10,-2) -- (-4,-2) -- (-2,-1.5) -- (0,-1.5) -- (2,-1) -- (4,-1) -- (6,-0.5);
\draw (-9,-1) -- (-3,-1) -- (-1,-0.5) -- (1,-0.5) -- (3,0) -- (5,0) -- (7,0.5);
\draw (-8,0) -- (-2,0) -- (0,0.5) -- (2,0.5) -- (4,1) -- (6,1) -- (8,1.5);
\draw (-7,1) -- (-1,1) -- (1,1.5) -- (3,1.5) -- (5,2) -- (7,2) -- (9,2.5);
\draw (-10,-2) -- (-9,-3);
\draw (-9,-1) -- (-7,-3);
\draw (-8,0) -- (-5,-3);
\draw (-7,1) -- (-4,-2);
\draw (-6,2) -- (-3,-1);
\draw (-4,2) -- (-2,0);
\draw (-2,2) -- (-1,1);
\draw (8,3) -- (9,2.5);
\draw (6,3) -- (7,2) -- (8,1.5);
\draw (4,2.5) -- (5,2) -- (6,1) -- (7,0.5);
\draw (2,2.5) -- (3,1.5) -- (4,1) -- (5,0) -- (6,-0.5);
\draw (0,2) -- (1,1.5) -- (2,0.5) -- (3,0) -- (4,-1) -- (5,-1.5);
\draw (3,-2) -- (8,3);
\draw (-1,1) -- (0,0.5) -- (1,-0.5) -- (2,-1) -- (3,-2);
\draw (-2,0) -- (-1,-0.5) -- (0,-1.5) -- (1,-2);
\draw (-3,-1) -- (-2,-1.5) -- (-1,-2.5);
\draw (-4,-2) -- (-3,-2.5);
\draw (7.7,1) -- (9,1);
\draw (6.7,0) -- (8,0);
\draw (5.7,-1) -- (7,-1);
\draw (3.3,-2) -- (6,-2);
\draw (3,-3) -- (4.2,-1.8);
\draw (1,-3) -- (1.9,-2.1);
\draw (-1,-3) -- (-0.5,-2.5);
\draw (-3,-3) -- (-2.6,-2.6);
\draw (-3,-3) -- (-3.3,-2.7);
\draw (-1,-3) -- (-1.4,-2.6);
\draw (1,-3) -- (0.3,-2.3);
\draw (3,-3) -- (2.1,-2.1);
\draw (5,-3) -- (3.9,-1.9);
\draw (6,-2) -- (5.4,-1.4);
\draw (7,-1) -- (6.4,-0.4);
\draw (8,0) -- (7.4,0.6);
\draw (9,1) -- (8.4,1.6);
\draw (-9,-4) -- (-10,-3.5) -- (-8.3,-3.1);
\draw (-11,-4.5) -- (-9.7,-3.2);
\draw (-11,-4.5) -- (-12,-3.5) -- (-10,-3.5) -- (-10.4,-3.1);
\draw (-10.83,-2.67) -- (-11,-2.5);
\draw (-9.83,-1.67) -- (-10,-1.5);
\draw (-8.83,-0.67) -- (-9,-0.5);
\draw (-7.83,0.33) -- (-8,0.5);
\draw (-11,-2.5) -- (-10.65,-2.5);
\draw (-10,-1.5) -- (-9.65,-1.5);
\draw (-9,-0.5) -- (-8.65,-0.5);
\end{tikzpicture}
\end{center}
\caption{A piece of an $\tilde{A}_2$ building}
\label{fig:piece}
\end{figure}
\end{example}

The affine building $\Delta$ can be viewed as a metric space $(|\Delta|,d)$ in a standard 
way (the \textit{geometric realisation}, see \cite[\S11.2]{AB}). Each apartment is 
isomorphic to an affine Coxeter complex and hence can be viewed as a metric space by 
the construction in Section~\ref{sect:roots}. Then the building axioms ensure that these 
metrics can be `glued together' in a unique way to give a metric on~$|\Delta|$. All 
isomorphisms in the building axioms can be taken to be isometries. Since we will henceforth 
regard $\Delta$ as a metric space, we will drop the vertical bar notation, and simply 
write $\Delta$ for the metric space $(|\Delta|,d)$.

By \cite[Theorem~11.16]{AB} the building $\Delta$ is a $\mathrm{CAT}(0)$ space. Thus, $\Delta$ 
is a complete metric space with unique geodesics, satisfying the 
\textit{negative curvature inequality}: If $x,y\in \Delta$, $t\in [0,1]$, and $p(t)$ is the 
unique point on the geodesic $[x,y]$ with $d\bigl(x,p(t)\bigr)=t\,d(x,y)$ then
\begin{align}\label{eq:negcurv}
d^2\bigl(z,p(t)\bigr)\leq(1-t)\,d^2(z,x)+t\, d^2(z,y)-t(1-t)\,d^2(x,y)\quad
\textrm{for all} \;z\in \Delta\,.
\end{align}
 
Since $\Delta$ is $\mathrm{CAT}(0)$ we define the \textit{visibility boundary} $\partial \Delta$ 
in the usual way as the set of equivalence classes of rays (with two rays being 
\textit{parallel} if the distance between them is bounded). The standard topology makes 
$\overline{\Delta}=\Delta\cup\partial \Delta$ into a compact Hausdorff space 
(see Bridson and Haefliger \cite[\S II.8.5]{bridson}). Points of the visibility boundary are called 
\textit{ideal points} of $\Delta$. Given $\xi\in\partial \Delta$ and $x\in \Delta$, 
there is a unique ray in the class $\xi$ with base point $x$ (\cite[Proposition~II.8.2]{bridson} 
or \cite[Lemma~11.72]{AB}). We sometimes denote this ray by $[x,\xi)$. Thus one may think 
of $\partial \Delta$ as ``all rays based at $x$'' for any fixed~$x\in\Delta$.

Two fundamental facts from \cite[Chapter~11]{AB} are: 
\begin{enumerate}
\item[(S1)] Given a sector $\fs$ and a chamber $c$ of $\Delta$, there is a subsector $\fs'$ of 
$\fs$ such that $\fs'\cup c$ lies in an apartment, and 
\item[(S2)] given sectors $\fs$ and $\ft$ of $\Delta$, there are subsectors 
$\fs'\subseteq \fs$ and $\ft'\subseteq \ft$ such that $\fs'\cup\ft'$ lies in 
an apartment. 
\end{enumerate}

The following configurations of half apartments, sectors, and chambers will arise later in this paper. 

\begin{lemma}\label{lem:config} Let $c$ be a chamber of~$\Delta$, let $\fs$ be a 
sector of~$\Delta$, and let $H^+$ be a half apartment of $\Delta$ bounded by a 
wall~$H$ of~$\Delta$.
\begin{enumerate}
\item If $c$ has a panel contained in $H$ then there is an apartment containing $H^+\cup c$.
\item If $c$ contains the base vertex of $\fs$ then there is an apartment containing $\fs\cup c$.
\item If the half apartment $H^+$ and the sector $\fs$ intersect exactly along a sector 
panel of $\fs$ then there is an apartment containing $H^+\cup\fs$. 
\end{enumerate}
\end{lemma}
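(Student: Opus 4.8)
The three statements are variations on a single theme: two \emph{convex} pieces of $\Delta$ (a half apartment, a sector, a chamber) that meet in the expected way can be glued into one apartment. My plan is to treat (2) as the basic template, handle (1) by a folding construction, and then obtain (3) by combining the two. Throughout I fix an apartment $A$ containing $H^+$, write $H$ for the wall bounding $H^+$, and let $d$ be the chamber of $H^+$ carrying the relevant panel $\pi\subset H$.

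For (2) I would apply the sector property (S1) to the sector $\fs$ and the chamber $c$, producing a subsector $\fs'\subseteq\fs$ with $\fs'\cup c$ contained in an apartment $A'$. Since $c$ contains the base vertex $v$ of $\fs$, we have $v\in A'$, and $A'$ also contains $\fs'$. Inside the Euclidean apartment $A'$ there is a unique sector based at the special vertex $v$ and parallel to $\fs'$; call it $\fs''$, and note $\fs''\supseteq\fs'$. The key point is that $\fs=\fs''$: both are sectors based at $v$ containing the common subsector $\fs'$, so they have the same ideal boundary, and since a sector is the union of the geodesic rays from its base vertex to the ideal points of that boundary — rays which are unique by \cite[Prop.~II.8.2]{bridson} — the two coincide. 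Hence $\fs=\fs''\subseteq A'$, so $A'\supseteq\fs\cup c$. This ``upgrade a subsector to the full sector'' device is what converts the \emph{subsector} conclusions of (S1)--(S2) into \emph{full} sector statements.

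For (1) I would argue by folding. We may assume $c\neq d$, since otherwise $c\subseteq H^+\subseteq A$. The target apartment should meet $A$ exactly in $H^+$, so I want to produce a half apartment $\beta$ with boundary wall $H$ (as a subset of $\Delta$), lying on the opposite side of $H$ from $H^+$ and containing $c$: once $\beta$ is found, $H^+\cup\beta$ is a subcomplex isomorphic to $\Sigma$ and hence, because we use the complete apartment system, is an apartment containing $H^+\cup c$. To build $\beta$ I would take an apartment $B$ containing $c$ and $d$ (axiom (B1)), use that in $B$ the panel $\pi$ lies on a \emph{unique} wall, and transport the opposite half of $A$ across $H$ by the type-preserving isomorphism from (B2). \textbf{The main obstacle is exactly this gluing:} the isomorphisms furnished by (B2) fix only the common chambers, whereas to glue $\beta$ onto $H^+$ into an apartment I need a map fixing the entire wall $H$ pointwise, not merely the panel $\pi$. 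Overcoming this is the crux; I would resolve it using the convexity of roots together with the uniqueness of the wall through $\pi$ in the thin complex $A$ (which forces the boundary of the transported root to be $H$), following the standard apartment-gluing results of \cite[Ch.~4]{AB}.

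Finally, in (3) the sector $\fs$ meets $H^+$ only along a sector panel $\fp\subseteq H$, so $\fs$ sits on the far side of $H$, and I again seek a half apartment $\beta$ opposite $H^+$ across $H$, now with $\fs\subseteq\beta$. I would produce $\beta$ by the folding of (1), applied to a chamber of $\fs$ carrying a panel in $\fp\subseteq H$, and then verify $\fs\subseteq\beta$ by the subsector-to-sector upgrade of part (2): anchor a subsector of $\fs$ inside the apartment $H^+\cup\beta$ using (S1), and invoke uniqueness of the sector with the given base vertex and direction to place all of $\fs$ there. The main obstacle is once more the wall-gluing of (1); granting that, part (3) follows by assembling the two devices above.
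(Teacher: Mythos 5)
Your part (2) is correct, and it is a genuinely different route from the paper's: the paper disposes of (1) and (2) as ``routine applications'' of the isometry criterion \cite[Theorem~5.73]{AB}, whereas you combine (S1) with uniqueness of the sector based at a given point in a given parallel class (a legitimate CAT(0) argument, and a fact the paper itself uses later when it writes $\fs_x^{\omega}$). Part (1), however, has a genuine gap. The half apartment $\beta$ you transport lives in an auxiliary apartment $B$ containing $c$ and $d$, and its boundary is the wall of $B$ through the panel $\pi$. In a thick building, the walls of two different apartments through a common panel need not coincide as subsets of $\Delta$: they agree at $\pi$ and can branch apart beyond it. The uniqueness you invoke---that there is a unique wall through $\pi$---is a statement internal to each thin apartment separately, so it cannot force $\partial\beta=H$, and without that the configuration ``two half apartments meeting along the common wall $H$'' is never established. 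Moreover, even granting $\partial\beta=H$, your assertion that $H^+\cup\beta$ is a subcomplex isomorphic to $\Sigma$ is itself a nontrivial claim whose standard proof runs through exactly the isometry criterion; the paper instead applies that criterion directly to the chamber set of $H^+\cup c$ (the routine check being that $\mathrm{proj}_\pi(e)=d$, hence $d(c,e)=d(d,e)+1$, for every chamber $e$ of $H^+$), which avoids the wall-matching problem altogether.

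Part (3) has a second, independent gap: you cannot ``anchor a subsector of $\fs$ inside the apartment $H^+\cup\beta$ using (S1)''. Axiom (S1) produces \emph{some} apartment, of its own choosing, containing a subsector of $\fs$ and a given chamber; it gives you no control over which apartment that is, so it cannot place a subsector inside the particular apartment produced in step (1). After applying (1) to a chamber $c_1$ of $\fs$ with a panel in $\fp$, nothing prevents $\fs$ from branching away from that apartment immediately beyond $c_1$, so that apartment may contain no subsector of $\fs$ at all---and the uniqueness device of (2) needs both the base vertex and a full subsector inside the target apartment before it can start. The paper's proof of (3) is global rather than two-step: one checks that any two chambers of $H^+\cup\fs$ are joined by a minimal gallery remaining inside $H^+\cup\fs$, so this configuration is isometric to a subset of $W$, and a single application of \cite[Theorem~5.73]{AB} then yields an apartment containing all of $H^+\cup\fs$. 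If you wish to keep your architecture, you must both repair the gluing in (1) and replace the (S1) step in (3); in practice each of these reduces to the isometry criterion that your proposal was trying to avoid.
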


\begin{proof}
The first two statements are routine applications of \cite[Theorem~5.73]{AB} 
(or \cite[Theorem~3.6]{ronan}). For the third statement, using ideas from \cite[\S11.5]{AB} 
one sees that the configuration $H^+\cup\fs$ is isometric to a subset of $W$ (the key point 
here is that for any chambers $c,d\in H^+\cup\fs$ there is a minimal gallery from $c$ to $d$ 
which is contained in $H^+\cup\fs$). Hence $H^+\cup\fs$ is contained in an apartment 
by \cite[Theorem~5.73]{AB}.
\end{proof}

\subsection{Vector distance}

Recall that $E^+=\{x\in E\mid\langle x,\alpha_i\rangle\geq 0\textrm{ for all }1\leq i\leq r\}$. 

\begin{defn} Let $x$ and $y$ be any points of  $\Delta$. The \textit{vector distance} 
$\vect{d}(x,y)\in E^+$ from $x$ to $y$ is defined as follows. By (B1), choose an apartment $A$ containing 
$x$ and $y$ and choose a type preserving isomorphism $\psi:A\to \Sigma$. Then we define
$$
\vect{d}(x,y)=\bigl(\psi(y)-\psi(x)\bigr)^+,
$$
where for $\mu \in E$, we denote by $\mu^+$ the unique element in $W_0\mu\cap E^+$.
\end{defn}

By an argument similar to \cite[Proposition~5.6]{P1}, the building axiom~(B2) implies that 
the value of $\vect{d}(x,y)$ does not depend on the choices of $A$ and $\psi$ in the definition. 
More intuitively, to compute $\vect{d}(x,y)$ one looks at the vector from $x$ to $y$ (in any 
apartment containing $x$ and $y$) and takes the dominant representative of this vector under the 
$W_0$-action. We have 
$
d(x,y)=\|\vect{d}(x,y)\|,
$
where $d(x,y)$ is the metric distance in~$\Delta$.

\subsection{Vector Busemann functions}

\textit{Busemann functions} (see \cite[Definition~8.17]{bridson}) play an important role 
in the theory of $\mathrm{CAT}(0)$ spaces. In this section we will use retractions in the 
building to define vector analogues of Busemann functions for affine buildings. Essentially 
the ``geodesic ray'' in the usual definition of a Busemann function is replaced by a ``sector'' 
in the affine building (see Proposition~\ref{prop:ft}).

\begin{defn} In affine buildings there are two kinds of retractions -- \textit{chamber based}, 
and \textit{sector based}. Let $A$ be an apartment of $\Delta$. Let $c$ be a chamber of $A$ and 
let $\fs$ be a sector of~$A$. The \textit{retractions} $\rho_{A,c}:\Delta\to A$ and 
$\rho_{A,\fs}:\Delta\to A$ are defined as follows. Let $x\in \Delta$.
\begin{enumerate}
\item[(1)] To compute $\rho_{A,c}(x)$ choose an apartment $A'$ containing $c$ and $x$ (using (B1)) and let 
$\psi:A'\to A$ be the isomorphism from (B2) fixing $A'\cap A$. Then 
$
\rho_{A,c}(x)=\psi(x).
$
\item[(2)] To compute $\rho_{A,\fs}(x)$ choose an apartment $A'$ which contains a subsector 
$\fs'$ of $\fs$ and the point~$x$ (using (S1)). Let $\psi:A'\to A$ be the isomorphism from (B2) fixing $A'\cap A$, and define
$
\rho_{A,\fs}(x)=\psi(x).
$
\end{enumerate}
It is readily seen from the building axioms that the values of $\rho_{A,c}(x)$ and $\rho_{A,\fs}(x)$ do not depend on the 
particular choice of the apartment~$A'$ (see~\cite{AB}).
\end{defn}

For chambers $c$ ``sufficiently deep'' in the sector $\fs$ we have
$
\rho_{A,\fs}(x)=\rho_{A,c}(x),
$
although it is not possible to choose $c$ independently of~$x$. Conceptually, $\rho_{A,c}$ 
``radially flattens'' the building onto $A$ from the centre $c$,
and $\rho_{A,\fs}$ flattens the building onto $A$ from a remote centre ``deep in the sector~$\fs$''. 
Note that if $\fs'$ is a sector of $A$ such that $\fs\cap \fs'$ contains a sector, 
then $\rho_{A,\fs'}=\rho_{A,\fs}$.

If $\fs$ is a sector in an apartment $A$, then there is a unique isomorphism 
$\psi_{A,\fs}:A\to \Sigma$ mapping $\fs$ to $\fs_0$ and preserving vector distances (cf. \cite[Lemma~3.2]{sph}). 
That is, $\psi_{A,\fs}(\fs)=\fs_0$ and 
$\vect{d}\bigl(\psi_{A,\fs}(x),\psi_{A,\fs}(y)\bigr)=\vect{d}(x,y)$ 
for all $x,y\in A$. We often use $\psi_{A,\fs}$ to fix a Euclidean coordinate system on an apartment~$A$.

\begin{defn}\label{defn:horof} The \textit{vector Busemann function} associated to the 
sector $\fs$ is the function
$$
\vect{h}_{\fs}:\Delta\to E\quad\textrm{given by}\quad 
\vect{h}_{\fs}(x)=\psi_{A,\fs}\bigl(\rho_{A,\fs}(x)\bigr)
$$
where $A$ is any apartment containing~$\fs$. It follows from the building axioms that 
the value of $\vect{h}_{\fs}$ does not depend on the choice of the apartment $A$ containing~$\fs$. 
We write $\vect{h}=\vect{h}_{\fs_0}$. Thus
$$
\vect{h}:\Delta\to E\qquad\textrm{is given by}\qquad \vect{h}(x)=\rho_{\Sigma,\fs_0}(x),
$$
since $\psi_{\Sigma,\fs_0}:\Sigma\to\Sigma$ is the identity.
\end{defn}

\begin{example}
Let $A$ be the ``horizontal apartment'' in Figure~\ref{fig:piece}, and let $\fs$ be the shaded sector. 
There are two ways to identify $A$ with $\Sigma$ such that $\fs$ is identified with $\fs_0$, 
but only one of these ways (namely $\psi_{A,\fs}$) will preserve vector distance (in this case 
this amounts to looking at~$A$ from ``above'' or ``below''). To compute the vector Busemann function 
$\vect{h}_{\fs}(x)$ one can imagine a strong wind blowing from deep within the sector~$\fs$, 
causing the building to flatten down to the apartment~$A$. Then $\vect{h}_{\fs}(x)$ is the place 
in~$A$ where $x$ flattens to, where $A$ is identified with $\Sigma$ via~$\psi_{A,\fs}$.
\end{example}

Comparing the following proposition with \cite[Definition~8.17]{bridson} shows that our vector 
Busemann function is an analogue of the usual Busemann functions for $\mathrm{CAT}(0)$ spaces 
with ``geodesic rays'' replaced by ``sectors'' (and also with an extra superficial minus sign).

\begin{prop}\label{prop:ft} Let $\fs$ be a sector of $\Delta$ with base point $\fs(0)$. 
For each $\lambda\in E^+$ let $\fs(\lambda)$ be the unique point of $\fs$ with 
$\vect{d}\bigl(\fs(0),\fs(\lambda)\bigr)=\lambda$. Then
$$
\vect{h}_{\fs}(x)=\lim_{\lambda\to\infty}\Bigl(\lambda-\vect{d}\bigl(x,\fs({\lambda})\bigr)\Bigr)
\quad\textrm{for all}\:x\in \Delta\,,
$$
where the limit is taken with $\langle\lambda,\alpha_i\rangle\to\infty$ for each $i=1,\ldots,r$. 
In particular, the limit exists.
\end{prop}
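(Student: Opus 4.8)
The plan is to fix an apartment $A$ containing the sector $\fs$, identify it with $\Sigma$ via $\psi_{A,\fs}$, and reduce the claimed identity to a statement about a single point $z = \vect{h}_{\fs}(x) = \psi_{A,\fs}(\rho_{A,\fs}(x))$ inside $\Sigma$. The key structural fact I would exploit is the retraction characterisation: for each fixed $x$, once $\lambda$ is deep enough in the sector (meaning $\langle\lambda,\alpha_i\rangle$ is large for every $i$), there is an apartment $A'$ containing both a subsector $\fs'$ of $\fs$ and the point $x$, and on $A'$ the retraction $\rho_{A,\fs}$ agrees with the isomorphism $\psi\colon A'\to A$ fixing $A'\cap A$. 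The essential gain is that $\psi$ is an isometry preserving vector distance, so $\vect{d}(x,\fs(\lambda)) = \vect{d}(\psi(x),\psi(\fs(\lambda)))$, and $\psi(\fs(\lambda))$ is just the point $\fs_0(\lambda)$ in $\Sigma$ (the image of the sector point), i.e. the point at vector distance $\lambda$ from the base along $\fs_0$.

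Having transported everything into $\Sigma$, the problem becomes purely Euclidean. First I would compute in $\Sigma \cong E$: writing $z = \vect{h}_{\fs}(x)$ for the flattened image of $x$, and $\fs_0(\lambda) = \lambda$ (identifying the sector point with the coweight $\lambda\in E^+$ since $\fs_0$ is based at $0$), I claim that for $\lambda$ sufficiently dominant, $\vect{d}(z,\lambda) = \lambda - z$, i.e. the straight vector $\lambda - z$ is already dominant and no $W_0$-folding occurs. This is where the hypothesis $\langle\lambda,\alpha_i\rangle\to\infty$ for each $i$ is used decisively: since $z$ is a fixed point of $E$, once $\langle\lambda,\alpha_i\rangle$ exceeds $\langle z,\alpha_i\rangle$ for every simple root $\alpha_i$, the difference $\lambda - z$ lies in the open dominant cone, hence equals its own dominant representative $(\lambda-z)^+$. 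Consequently $\lambda - \vect{d}(x,\fs(\lambda)) = \lambda - (\lambda - z) = z = \vect{h}_{\fs}(x)$, and the expression is eventually constant, so the limit exists and equals $\vect{h}_{\fs}(x)$.

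The one step requiring care, and the main obstacle, is justifying that for $x$ fixed the single apartment $A'$ (containing $x$ and a subsector of $\fs$) can be chosen so that it simultaneously contains $\fs(\lambda)$ for all sufficiently deep $\lambda$, and that on this apartment the equalities $\rho_{A,\fs}=\psi$ and $\psi(\fs(\lambda))=\lambda$ hold uniformly. I would handle this using (S1): choose $A'$ containing $x$ and some fixed subsector $\fs'$ of $\fs$. Every $\fs(\lambda)$ with $\lambda$ deep enough lies in $\fs'$, hence in $A'$, so the vector-distance computation can be done entirely within $A'$, where $\psi_{A,\fs}\circ\rho_{A,\fs}$ restricts to $\psi_{A,\fs}\circ\psi$, a vector-distance-preserving isometry onto $\Sigma$. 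The phrase ``deep enough'' must be made quantitative in terms of both the finitely many simple-root inequalities needed to place $\fs(\lambda)$ in $\fs'$ and the inequalities $\langle\lambda,\alpha_i\rangle > \langle z,\alpha_i\rangle$ needed for the folding-free computation; taking $\lambda$ with all $\langle\lambda,\alpha_i\rangle$ large suffices for both, which is exactly the sense in which the limit is taken. With this bookkeeping the identity is established and the ``in particular, the limit exists'' clause follows immediately from eventual constancy of the sequence.
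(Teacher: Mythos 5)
Your proposal is correct and follows essentially the same route as the paper's proof: use (S1) to place $x$ and a subsector $\fs'$ of $\fs$ in an apartment $A'$, note that all sufficiently deep points $\fs(\lambda)$ lie in $\fs'\subseteq A'$, exploit that the retraction restricted to $A'$ (composed with $\psi_{A,\fs}$) preserves vector distances, and conclude by the Euclidean observation that $\lambda-\vect{h}_{\fs}(x)$ is dominant for large $\lambda$, so the expression is eventually constant. The only cosmetic slip is conflating $\psi\colon A'\to A$ with the composite $\psi_{A,\fs}\circ\psi$ when writing $\psi(\fs(\lambda))=\fs_0(\lambda)$, which does not affect the argument.
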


\begin{proof} Let $A$ be an apartment containing $\fs$, and let $\rho=\rho_{A,\fs}$ and 
$\psi=\psi_{A,\fs}$. Thus, by definition, $\vect{h}_{\fs}(x)=\psi(\rho(x))$.
Choose an apartment $A'$ (by (S1)) containing a subsector $\fs'$ of $\fs$ and the point $x\in \Delta$. 
Note that $A'$ contains all points $\fs(\lambda)$ with each $\langle\lambda,\alpha_i\rangle$ 
sufficiently large, and for such $\lambda$ we have 
$$
\vect{d}\Bigl(\rho(x),\rho\bigl(\fs(\lambda)\bigr)\Bigr)=\vect{d}\bigl(x,\fs(\lambda)\bigr),
$$
because $\rho|_{A'}:A'\to A$ is a type preserving isomorphism (and hence preserves vector distance). 
On the other hand, since $\vect{h}_{\fs}=\psi\circ \rho$ and $\psi$ preserves vector distances, we have
$$
\vect{d}\Bigl(\rho(x),\rho\bigl(\fs(\lambda)\bigr)\Bigr)
=\vect{d}\Bigl(\vect{h}_{\fs}(x),\vect{h}_{\fs}\bigl(\fs(\lambda)\bigr)\Bigr)
=\vect{d}\bigl(\vect{h}_{\fs}(x),\lambda\bigr).
$$
For sufficiently large $\lambda$ we have $\vect{d}\bigl(\vect{h}_{\fs}(x),\lambda\bigr)
=\lambda-\vect{h}_{\fs}(x)$ by the Euclidean geometry of $E$ and the fact that 
$\lambda-\vect{h}_{\fs}(x)$ is dominant for large~$\lambda$. Thus for fixed $x\in \Delta$ we have
$
\vect{h}_{\fs}(x)=\lambda-\vect{d}\bigl(x,\fs(\lambda)\bigr)$ for all $\lambda$ with each 
$\langle\lambda,\alpha_i\rangle$ sufficiently large, hence the result.
\end{proof}

\subsection{The building of a Lie group over a non-archimedean local field}\label{sect:ex}

Affine buildings are intimately related to Lie groups over non-archimedean local fields. Specifically, if $G$ is such a group with maximal compact subgroup~$K$, then $G/K$ is a subset of the vertex set of the so called \textit{affine Bruhat-Tits building}~$\Delta$ of~$G$ (see \cite{BT} and \cite{macsph}).

Let us first give the details for the concrete type~$A$ example. Let $\mathbb{F}$ be a non-archimedean local field with ring of integers $\mathfrak{o}$ and 
uniformiser~$\varpi$. Thus $\mathbb{F}$ is either a finite extension of the $p$-adics 
$\mathbb{Q}_p$ or $\mathbb{F}$ is the field of formal Laurent series $\mathbb{F}_q(\!(t)\!)$ 
with coefficients in a finite field~$\mathbb{F}_q$. If $\mathbb{F}=\mathbb{Q}_p$ then 
$\fo=\mathbb{Z}_p$ and $\varpi=p$, and if $\mathbb{F}=\mathbb{F}_q(\!(t)\!)$ then 
$\fo=\mathbb{F}_q[[t]]$ and $\varpi=t$.

Let $G=PGL_{r+1}(\FF)$ and let $K=PGL_{r+1}(\fo)$. 
Write $P=\ZZ^{r+1}+\ZZ\mathbf{1}$, with $\mathbf{1}=(1,\ldots,1)$, and
$P^+=\{\lambda\in \ZZ^{r+1}\mid \lambda_1\geq \cdots\geq \lambda_{r+1}\}+\ZZ\mathbf{1}$. 
For each $\lambda\in P$ let 
$t_{\lambda}=\mathrm{diag}(\varpi^{-\lambda_1},\ldots,\varpi^{-\lambda_{r+1}})$ 
(note that $t_{\lambda+\ZZ\mathbf{1}}=t_{\lambda}$ in $G$). Let $U$ be the upper 
triangular matrices in $G$ with $1$s on the diagonal. The \textit{Cartan} and \textit{Iwasawa} 
decompositions of $G$ are (respectively): 
\begin{align}\label{eq:CIdecomp}
G=\bigsqcup_{\lambda\in P^+}Kt_{\lambda}K\qquad\textrm{and}\qquad G=\bigsqcup_{\mu\in P}Ut_{\mu}K.
\end{align}
The Bruhat-Tits building of $G$ is a simplicial 
complex $\Delta$ with vertex set~$G/K$. In the $1$-skeleton of~$\Delta$, the vertex $gK$ 
is adjacent to the vertex $hK$ if and only if $g^{-1}hK\subseteq Kt_{\omega_i}K$ for some 
$1\leq i\leq r$, where $\omega_i=(1,\ldots,1,0,\ldots,0)+\ZZ\mathbf{1}$ (with $i$ $1$s). 

Each panel of $\Delta$ lies in exactly $|k|+1$ chambers, where $k=\mathfrak{o}/\varpi\mathfrak{o}$ 
is the (finite) residue field of~$\mathbb{F}$. Thus, for example, if 
$\mathbb{F}=\mathbb{F}_q(\!(t)\!)$ then every panel lies in exactly $q+1$ chambers. The vector distance between vertices $gK$ and $hK$ in $\Delta$ is 
\begin{align}\label{eq:vbuild}
\vect{d}(gK,hK)=\lambda\qquad\textrm{if and only if}\qquad g^{-1}hK\subseteq Kt_{\lambda}K,
\end{align}
and since each $u\in U$ stabilises a subsector of the fundamental sector of $\Delta$, 
the Busemann function $\vect{h}$ is given by 
\begin{align}\label{eq:hbuild}
\vect{h}(gK)=\mu\qquad\textrm{if and only if}\qquad gK\subseteq Ut_{\mu}K.
\end{align}

More generally, let $R$ be a root system with coroot lattice $Q$ and coweight lattice $P$, and let $L$ be a lattice with $Q\subseteq L\subseteq P$. Let $\mathbb{F}$ be a non-archimedean local field as above. Let $G=G(\mathbb{F})$ be the Chevalley group over~$\mathbb{F}$ with root datum $(R,L)$ (see \cite{steinberg} or \cite{carter}). Thus $G$ is generated by elements $x_{\alpha}(f)$ and $h_{\lambda}(g)$ with $\alpha\in R$, $\lambda\in L$, $f\in\mathbb{F}$, and $g\in\mathbb{F}^{\times}$. One may view the elements $x_{\alpha}(f)$ as analogues of the elementary matrices from type~$A$, and the elements $h_{\lambda}(g)$ are analogues of the diagonal matrices. Then the relations in $G$ are the analogues of the usual row reduction operations.  

Let $\theta:\fo\to k$ be the canonical homomorphism onto the residue field $k=\mathfrak{o}/\varpi\mathfrak{o}$ (for example, if $\FF=\FF_q(\!(t)\!)$ then $\theta$ is evaluation at~$t=0$). The \textit{standard Iwahori} subgroup is defined by the following diagram, where $B(k)$ is the standard \textit{Borel subgroup} of $G(k)$.
$$
\begin{array}{cclcl}
G&=&G(\mathbb{F})\\
\rotatebox{90}{$\subseteq$}& &\\
K&=&G(\mathfrak{o})&\xrightarrow{\quad\theta\quad}&G(k)\\
\rotatebox{90}{$\subseteq$}& & & &\,\,\,\rotatebox{90}{$\subseteq$}\\
I&=&\theta^{-1}(B(k))&\xrightarrow{\quad\theta\quad}&B(k)
\end{array}
$$
Then $G/I$ is the set of chambers of an affine building $\Delta$ of type~$W$, where $W$ is the affine Weyl group of~$R$. Moreover, $G/K$ is a subset of the vertex set of~$\Delta$. Specifically, $G/K$ is the set of all vertices $x$ of $\Delta$ with $\tau(x)\in\{\tau(\lambda)\mid\lambda\in L\}$ (if $L=Q$ then this is precisely the set of all type~$0$ vertices of~$\Delta$, and if $L=P$ then it is the set of all \textit{special vertices} of~$\Delta$). The Cartan and Iwasawa decompositions~(\ref{eq:CIdecomp}) hold (with $U$ being the subgroup of $G$ generated by the elements $x_{\alpha}(f)$ with $\alpha\in R^+$ and $f\in\mathbb{F}$, and with $t_{\lambda}$ given by $t_{\lambda}=h_{\lambda}(\varpi^{-1})$). Moreover the vector distance function~$\vect{d}$ and Buesmann function~$\vect{h}$ are exactly as in (\ref{eq:vbuild}) and~(\ref{eq:hbuild}).

\section{Regular sequences in affine buildings}\label{sect:regularity}

Let $\Delta$ be an irreducible affine building with Coxeter complex $\Sigma$ in the 
Euclidean space $E$. Let $\lambda\in E^+$. A \textit{$\lambda$-ray} in $\Delta$ is a 
function $\fr:[0,\infty)\to \Delta$ such that
$$
\vect{d}\bigl(\fr(t_1),\fr(t_2)\bigr)=(t_2-t_1)\lambda\qquad\textrm{for all}\;t_2\geq t_1\geq 0.
$$
Since we are specifying both speed and direction, the notion of a $\lambda$-ray is a refinement 
of the usual notion of a ray in a $\mathrm{CAT}(0)$ space. 

\begin{defn}
A sequence $(x_n)_{n\geq 0}$ is \textit{$\lambda$-regular} if there exists a $\lambda$-ray 
$\fr:[0,\infty)\to \Delta$ such that
$$
d\bigl(x_n,\fr(n)\bigr)=o(n).
$$
Here, $o(n)$ is the usual ``little-$o$'' notation.
Without loss of generality we may stipulate that $\fr(0)=x_0=o$ in the definition, and we will 
do so throughout.
\end{defn}

The main result of this paper is the following characterisation of $\lambda$-regular sequences 
in the spirit of Kaimanovich's original characterisation \cite[Theorems~2.1 and~2.4]{kaimanovich} 
for symmetric spaces.

\newpage

\begin{thm}\label{thm:main}
Let $(x_n)_{n\geq 0}$ be a sequence in $\Delta$, and let $\lambda\in E^+$. Let $\fs$ be a sector of $\Delta$. The following are equivalent.
\begin{enumerate}
\item[\rm (1)] The sequence $(x_n)_{n\geq 0}$ is $\lambda$-regular.
\item[\rm (2)] $d(x_n,x_{n+1})=o(n)\;$ and $\;\vect{h}_{\fs}(x_n)=n\mu_{\fs}+o(n)\;$ 
for some $\mu_{\fs}\in W_0\lambda$ (independent of~$n$).
\item[\rm (3)] $d(x_n,x_{n+1})=o(n)\;$ and $\;\vect{d}(o,x_n)=n\lambda+o(n)$.
\end{enumerate}
\end{thm}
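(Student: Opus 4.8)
The plan is to establish the cycle (1)$\Rightarrow$(3), (3)$\Rightarrow$(2), (2)$\Rightarrow$(1), after first recording two Lipschitz estimates that let me move freely between the metric $d$ and the two vector coordinates. Both $x\mapsto\vect{d}(o,x)$ and $x\mapsto\vect{h}_{\fs}(x)$ are $1$-Lipschitz into $E$. For $\vect{h}_{\fs}$ this is immediate from $\vect{h}_{\fs}=\psi_{A,\fs}\circ\rho_{A,\fs}$, since retractions are distance non-increasing and $\psi_{A,\fs}$ is an isometry, giving $\|\vect{h}_{\fs}(x)-\vect{h}_{\fs}(y)\|=d(\rho_{A,\fs}(x),\rho_{A,\fs}(y))\le d(x,y)$. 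For $\vect{d}(o,\cdot)$ I would fix a chamber $c$ containing $o$ and a retraction $\rho_{A,c}$ onto an apartment $A\ni o$; the folding isomorphism computing $\rho_{A,c}$ is type preserving and fixes $o$, hence preserves vector distance from $o$, so $\vect{d}(o,x)=\vect{d}(o,\rho_{A,c}(x))$. As $o,\rho_{A,c}(x),\rho_{A,c}(y)$ then lie in one apartment, the estimate reduces to the elementary fact that the folding map $\mu\mapsto\mu^{+}$ of $E$ onto $E^{+}$ is $1$-Lipschitz, which follows from $\|\mu^{+}\|=\|\mu\|$ together with $\langle\mu^{+},\nu^{+}\rangle\ge\langle\mu,\nu\rangle$. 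Consequently, altering $x_n$ by $o(n)$ in the metric changes neither coordinate by more than $o(n)$, so these estimates reduce every implication to a computation along a model ray.

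For the forward implications, let $\fr$ be a $\lambda$-ray with $\fr(0)=o$ and $d(x_n,\fr(n))=o(n)$. Then $d(x_n,x_{n+1})\le d(x_n,\fr(n))+\|\lambda\|+d(\fr(n+1),x_{n+1})=o(n)$, the clause common to (2) and (3). By the Lipschitz estimates it suffices to evaluate the coordinates along $\fr$. The definition of a $\lambda$-ray gives $\vect{d}(o,\fr(n))=\vect{d}(\fr(0),\fr(n))=n\lambda$, proving (3). For (2), the ray $\fr$ and the sector $\fs$ eventually lie in a common apartment $A'$ (a standard sector configuration, cf.\ \cite[Ch.~11]{AB}); there $\vect{h}_{\fs}$ coincides with the vector-distance-preserving chart $\psi_{A',\fs}$, and $\fr$ is the Euclidean ray in a direction $\mu_{\fs}$ with $\mu_{\fs}^{+}=\lambda$, so $\mu_{\fs}\in W_0\lambda$ and $\vect{h}_{\fs}(\fr(n))=n\mu_{\fs}+O(1)$, whence $\vect{h}_{\fs}(x_n)=n\mu_{\fs}+o(n)$.

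The substance of the theorem is the converse, and here I would first produce the tracking ray from the directional data. Choose (using Lemma~\ref{lem:config}) an apartment $A$ containing $o$ and a subsector of $\fs$, with chart $\psi_{A,\fs}$, so that (2) reads $\rho_{A,\fs}(x_n)=\psi_{A,\fs}^{-1}(n\mu_{\fs}+o(n))$: the retracted sequence tracks the Euclidean ray $t\mapsto t\mu_{\fs}$ in the flat $A$ with no transverse drift, and its ideal endpoint $\eta\in\partial\Delta$ (the boundary point of $A$ in the direction $\mu_{\fs}$) is thereby determined. I set $\fr=[o,\eta)$; since $A$ is convex and $\eta\in\partial A$, the ray $\fr$ lies in $A$ in the direction $\mu_{\fs}\in W_0\lambda$, so it is a $\lambda$-ray. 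For the tracking estimate I would use the CAT(0) inequality (\ref{eq:negcurv}): writing $b_{\eta}$ for the scalar Busemann function of $\eta$ normalised by $b_{\eta}(o)=0$, once one knows $b_{\eta}(x_n)=n\|\lambda\|+o(n)$ and $d(o,x_n)=n\|\lambda\|+o(n)$, the value $b_{\eta}(x_n)$ is within $o(n)$ of its maximum $d(o,x_n)$, and convexity forces $d(x_n,\fr)\le\sqrt{d(o,x_n)^{2}-b_{\eta}(x_n)^{2}}=\sqrt{o(n)\cdot O(n)}=o(n)$, with the foot of the perpendicular at $b_{\eta}$-level $n\|\lambda\|+o(n)$, i.e.\ near $\fr(n)$. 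This is the deterministic analogue of the tracking lemmas of \cite{KM, KL}.

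The genuine difficulty -- and the step I expect to be the main obstacle -- is justifying the two inputs $b_{\eta}(x_n)=n\|\lambda\|+o(n)$ and $d(o,x_n)=n\|\lambda\|+o(n)$, equivalently the equivalence (2)$\Leftrightarrow$(3) of the two coordinate growth conditions. The point is that the horospherical data of (2) does not by itself bound $d(o,x_n)$ from above: the fibres of $\rho_{A,\fs}$ are unbounded, so a point may carry the correct vector Busemann value while lying far from $A$. Nor does a soft estimate suffice, since $d(x_n,x_{n+1})=o(n)$ only controls the defect $d(o,x_n)-b_{\eta}(x_n)$ consecutively and hence a priori only to order $o(n^{2})$. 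Closing this gap is where the flat structure of apartments and the full strength of the hypotheses must enter: one should use that $\mu_{\fs}$ is a complete $W_0$-translate of $\lambda$ (so that the vector Busemann function records all $r$ horospherical coordinates, not merely a single one), together with the dominance relation $\vect{h}_{\fs}(x)^{+}\preceq\vect{d}(o,x)$ between the two coordinate systems, to show that ballistic growth of one coordinate in a regular direction, in the presence of $d(x_n,x_{n+1})=o(n)$, forces ballistic growth of the other at the matching rate and rules out wandering along horospheres. Once this bridge is in place the cycle closes and Theorem~\ref{thm:main} follows.
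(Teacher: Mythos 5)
Your preparatory material is mostly sound: the $1$-Lipschitz estimates for $x\mapsto\vect{d}(o,x)$ and $x\mapsto\vect{h}_{\fs}(x)$ are correct and give (1)$\Rightarrow$(3) cleanly, and your CAT(0) comparison argument for the tracking step in (2)$\Rightarrow$(1) is valid \emph{as conditional reasoning}: granted $b_{\eta}(x_n)=n\|\lambda\|+o(n)$ and $d(o,x_n)=n\|\lambda\|+o(n)$, one does get $d\bigl(x_n,\fr(n)\bigr)=o(n)$. (A small repair is needed in your (1)$\Rightarrow$(2): the ray $\fr$ need not ever enter the apartment $A'$ supplied by (S2) --- if the direction of $\fr$ lies on a wall of the sector $\ft$ containing it, $\fr$ avoids every proper subsector $\ft'$. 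The paper fixes this by replacing $\fr$ with the parallel $\lambda$-ray based at the base vertex of $\ft'$, which does lie in $A'$, and then using exactly the Lipschitz bound you proved.)

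The genuine gap is the one you name yourself and then leave open: nothing in the proposal establishes the bridge between the horospherical coordinate and the radial coordinate, i.e.\ the implication (2)$\Rightarrow$(3) (equivalently the input $d(o,x_n)=n\|\lambda\|+o(n)$ for your tracking step, and equally the link (3)$\Rightarrow$(2) that your announced cycle needs). This is not a technical loose end; it is the mathematical heart of the theorem, and the heuristics you offer cannot close it. The appeal to a ``regular direction'' fails because $\lambda\in E^+$ is arbitrary --- possibly singular, possibly $0$ --- and the dominance relation $\vect{h}_{\fs}(x)^+\preceq\vect{d}(o,x)$ bounds the radial coordinate from \emph{below}, whereas the missing estimate is an upper bound: as you correctly observe, the fibres of $\rho_{A,\fs}$ are unbounded, and $d(x_n,x_{n+1})=o(n)$ alone only controls the defect to order $o(n^2)$. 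What the paper does here is a genuinely building-theoretic induction with no soft substitute: fixing a reduced expression $ww_0=s_{i_1}\cdots s_{i_\ell}$ and setting $v_k=s_{i_1}\cdots s_{i_k}$, it walks the retraction direction through the chain of adjacent sectors $v_k\fs_0$, using Lemma~\ref{lem:sector2} to compare consecutive sector retractions (the comparison is $o(n)$ precisely because $n\mu$ stays within $o(n)$ of each separating wall, by the inversion-set computation $R(ww_0)=\{\alpha_{i_1},v_1\alpha_{i_2},\ldots,v_{\ell-1}\alpha_{i_\ell}\}$), and Lemma~\ref{lem:crucial} to extract, from a gallery tracking the sequence, an apartment containing both $x_n$ and a sector $\ft_k$ based at a point $z_k$ with explicit $o(n)$ control. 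After $\ell$ steps the retraction $\vect{h}_{ww_0}$ is faithful on an apartment containing $x_n$ and a sector based $o(n)$ from the origin, and only then can $\vect{d}(o,x_n)$ be read off. None of this mechanism, nor any replacement for it, appears in your proposal. Note finally that the paper's proof of (3)$\Rightarrow$(1) is an unconditional confluent/tree argument (using Lemma~\ref{lem:h}) that never passes through (2), so the paper's cycle closes without the step you deferred, whereas yours cannot.
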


We shall show in several steps that (1) $\Rightarrow$ (2) $\Rightarrow$ (3) $\Rightarrow$ (1). 
The implication $(1)\,\Rightarrow\,(2)$ is quite straightforward:

\begin{proof}[Proof of Theorem~\ref{thm:main}, {\rm (1)}\,$\Rightarrow$\,{\rm (2)}] 

Let $\fr:[0,\infty)\to \Delta$ be a $\lambda$-ray with $d\bigl(x_n,\fr(n)\bigr)=o(n)$. Then
$$
d(x_n,x_{n+1})\leq d\bigl(x_n,\fr(n)\bigr)+d\bigl(\fr(n),\fr(n+1)\bigr)+
d\bigl(\fr(n+1),x_{n+1}\bigr)=o(n).
$$
By \cite[Theorem~11.53]{AB}, the image of the $\lambda$-ray $\fr$ lies in an apartment, 
and hence in a sector~$\ft$. By (S2) there exist a subsector 
$\ft'$ of $\ft$ and a subsector~$\fs'$ of~$\fs$ such that $\ft'$ and $\fs'$ lie in a 
common apartment~$A'$, say. We claim that
\begin{align}\label{eq:isor}
\vect{h}_{\fs}|_{A'}:A'\to E\qquad\textrm{is an isomorphism preserving vector distances}.
\end{align}
Let $A$ be an apartment containing~$\fs$. By definition, 
$\vect{h}_{\fs}=\psi_{A,\fs}\circ \rho_{A,\fs}$, where $\psi_{A,\fs}:A\to E$ is the unique 
isomorphism mapping $\fs$ to $\fs_0$ and preserving vector distances. From the definition 
of sector retractions we have $\rho_{A,\fs}=\rho_{A,\fs'}$ (since $\fs'$ is a subsector of 
$\fs$), and that $\rho_{A,\fs'}|_{A'}:A'\to A$ is the isomorphism fixing $A\cap A'$ pointwise, 
whence~(\ref{eq:isor}).

Let~$z$ be the base vertex of $\ft'$, and let~$\fr'$ be the unique $\lambda$-ray with image 
in $\ft'$ and with $\fr'(0)=z$. Then $d\bigl(\fr(t),\fr'(t)\bigr)$ is a constant (as the 
rays are parallel and in a common apartment; indeed they are both in the sector~$\ft$). 
Since $\fr'$ lies in $A'$,  (\ref{eq:isor}) gives
$$
\vect{h}_{\fs}\bigl(\fr'(t)\bigr)=\vect{h}_{\fs}(z)+t\mu\qquad\textrm{for some} \;\mu\in W_0\lambda.
$$
Since $d\bigl(\fr(t),\fr'(t)\bigr)=O(1)$, and since retractions do not increase distance 
\cite[Theorem~11.16]{AB}, we have 
$\vect{h}_{\fs}\bigl(\fr(n)\bigr)=\vect{h}_{\fs}\bigl(\fr'(n)\bigr)+O(1)=n\mu+O(1)$, and hence 
$\vect{h}_{\fs}(x_n)=n\mu+o(n)$.
\end{proof}

We now turn to the implication (2) $\Rightarrow$ (3). The following lemma (cf. \cite{sph}) is of 
independent interest. It gives a quantitative version of the fundamental result 
\cite[Theorem~11.63]{AB}.

\begin{lemma}\label{lem:crucial} Let $A$ be an apartment of $\Delta$, and let $\fs$ be a sector 
of $A$. For each wall $H$ of $A$, let $H^+$ be the half apartment of $A$ bounded by $H$ and 
containing a subsector of $\fs$. Let $c_0,\ldots,c_n$ be a gallery in $\Delta$ with 
$c_0\subset A$. Let $H_k$ be the wall of $A$ containing the panel $\rho_{A,\fs}(c_{k-1}\cap c_k)$. 
Then there exists an apartment $A_n$ containing $c_n$ and $H_1^+\cap\cdots\cap H_n^+\,$.
\end{lemma}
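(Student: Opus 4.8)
The plan is to argue by induction on $n$, building the apartment $A_n$ out of $A_{n-1}$ by gluing the new chamber $c_n$ onto a suitable half apartment. Throughout write $\rho=\rho_{A,\fs}$ and $D_k=H_1^+\cap\cdots\cap H_k^+$, so that the goal at stage $n$ is an apartment $A_n\supseteq c_n\cup D_n$. The base case $n=0$ is immediate: the empty intersection $D_0$ is all of $A$, and since $c_0\subset A$ we may take $A_0=A$.

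For the inductive step I would first record the enabling geometric fact that each $D_k$ contains a subsector of $\fs$. Indeed every $H_j^+$ contains a subsector of $\fs$ by hypothesis, and a finite intersection of subsectors of $\fs$ again contains a subsector of $\fs$ (push the base point deep into the $E^+$-direction, using that $E^+$ is a cone, so that all the defining inequalities are eventually satisfied simultaneously). Consequently the apartment $A_{n-1}\supseteq D_{n-1}$ contains a subsector of $\fs$. By the very definition of the sector retraction this means $\phi:=\rho|_{A_{n-1}}\colon A_{n-1}\to A$ is a type-preserving simplicial isomorphism fixing $A\cap A_{n-1}$ pointwise; in particular $\phi$ fixes $D_{n-1}$ pointwise.

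Now comes the key point. The panel $\pi_n=c_{n-1}\cap c_n$ is a face of $c_{n-1}$, hence lies in $A_{n-1}$, and by definition $H_n$ is the wall of $A$ through $\rho(\pi_n)=\phi(\pi_n)$; thus $\pi_n$ lies in the wall $H:=\phi^{-1}(H_n)$ of $A_{n-1}$. Set $\tilde H^+:=\phi^{-1}(H_n^+)$, the half apartment of $A_{n-1}$ bounded by $H$. Since $D_n=D_{n-1}\cap H_n^+$, for every $x\in D_n$ we have $\phi(x)=x$ (because $x\in D_{n-1}$) together with $x\in H_n^+$, whence $x\in\tilde H^+$; that is, $D_n\subseteq\tilde H^+$. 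Finally $c_n$ has the panel $\pi_n$ contained in the wall $H$, so Lemma~\ref{lem:config}(1) produces an apartment $A_n$ containing $\tilde H^+\cup c_n$. This $A_n$ contains $c_n$ and $\tilde H^+\supseteq D_n$, completing the induction.

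The main obstacle is the passage between the two apartments $A$ and $A_{n-1}$: one must guarantee that $\rho_{A,\fs}$ restricts to a genuine isomorphism on $A_{n-1}$ — which is exactly why I verify first that $D_{n-1}$, and hence $A_{n-1}$, contains a subsector of $\fs$ — and one must correctly transport the half apartment $H_n^+\subseteq A$ back to the half apartment $\tilde H^+\subseteq A_{n-1}$ against which $c_n$ is actually glued. The crucial cancellation $D_n\subseteq\tilde H^+$ rests precisely on $\phi$ fixing $D_{n-1}$ pointwise, which is what renders the two descriptions of $H_n^+$ compatible and lets Lemma~\ref{lem:config}(1) deliver the result.
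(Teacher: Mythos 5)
Your proof is correct and takes essentially the same route as the paper's: the same induction with $A_0=A$, the same observation that $D_{n-1}$ (hence $A_{n-1}$) contains a subsector of $\fs$ so that $\rho_{A,\fs}|_{A_{n-1}}$ is the isomorphism fixing $A\cap A_{n-1}$ pointwise, and the same application of Lemma~\ref{lem:config}(1) to glue $c_n$ onto a half apartment of $A_{n-1}$. The only cosmetic difference is that you define that half apartment directly as $\phi^{-1}(H_n^+)$ and check $D_n\subseteq\phi^{-1}(H_n^+)$ pointwise, whereas the paper takes the half apartment of $A_{n-1}$ bounded by $H$ and containing a subsector of $\fs$, and then verifies that it equals $\psi(H_n^+)$ with $\psi=\phi^{-1}$.
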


\begin{proof} We use induction on $n$, with $A_0=A$ starting the induction. 
Suppose that $A_{n-1}$ contains $c_{n-1}$ and $H_1^+\cap\cdots\cap H_{n-1}^+$. 
Let $H$ be the wall of $A_{n-1}$ containing $c_{n-1}\cap c_n$, and let $H^+$ be the half 
apartment of $A_{n-1}$ bounded by $H$ and containing a subsector of $\fs$ 
(note that $A_{n-1}$ contains a subsector of $\fs$ by hypothesis). Let $A_n$ be an apartment 
containing $H^+$ and $c_n$ (Lemma~\ref{lem:config}). We claim that $A_n$ contains 
$H_1^+\cap\cdots\cap H_n^+$. To see this, let $\psi:A\to A_{n-1}$ be an isomorphism fixing 
$A\cap A_{n-1}$ pointwise (this intersection is nonempty since both apartments contain 
a subsector of~$\fs$). Then $\psi(H_n^+)=H^+$, because $\psi$ is the inverse of the 
isomorphism $\rho_{A,\fs}|_{A_{n-1}}:A_{n-1}\to A$ (by the definition of sector based 
retractions). Therefore
\begin{align*}
H_1^+\cap\cdots\cap H_n^+&=\psi(H_1^+\cap\cdots\cap H_n^+)
&&\textrm{(since $H_1^+\cap\cdots\cap H_n^+\subseteq A\cap A_{n-1}$)}\\
&=\psi(H_1^+\cap\cdots\cap H_{n-1}^+)\cap \psi(H_n^+)\\
&=H_1^+\cap\cdots\cap H_{n-1}^+\cap H^+
&&\textrm{(since $H_1^+\cap\cdots\cap H_{n-1}^+\subseteq A_{n-1}$)}
\end{align*}
Thus $H_1^+\cap\cdots\cap H_n^+\subseteq A_n$, completing the proof.
\end{proof}

\begin{lemma}\label{lem:sector2} Let $A$ be an apartment containing a sector $\fs$. 
Let $\fs'$ be a sector of $A$ adjacent to $\fs$, and let $H$ be the wall of $A$ 
separating $\fs$ and $\fs'$. Let $x\in\Delta$, and suppose that there is an apartment 
containing $\fs$ and~$x$. Then
$$
d\bigl(\rho_{A,\fs}(x),\rho_{A,\fs'}(x)\bigr)\leq 2d\bigl(\rho_{A,\fs}(x),H\bigr).
$$
\end{lemma}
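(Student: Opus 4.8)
The plan is to compute $y'=\rho_{A,\fs'}(x)$ explicitly in terms of $y=\rho_{A,\fs}(x)$, showing that $y'$ is the reflection of $y$ in a wall parallel to $H$; the inequality then reduces to a one-line computation in the Euclidean apartment~$A$.

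First I would fix coordinates on $A$ by $\psi_{A,\fs}$, so that $\fs=\fs_0$, the common base vertex of $\fs,\fs'$ is the origin, $H=H_{\alpha}$, with $\fs\subseteq H_{\alpha,0}^{+}$ and $\fs'=s_{\alpha}\fs\subseteq H_{\alpha,0}^{-}$. Using the hypothesis I choose an apartment $A'$ containing $\fs$ and $x$; then $\psi:=\rho_{A,\fs}|_{A'}\colon A'\to A$ is a type-preserving isometry fixing $A\cap A'$ pointwise, $y=\psi(x)$, and (since $\psi_{A,\fs}\circ\psi=\psi_{A',\fs}$) the point $y$ has the same coordinates in $A$ as $x$ has in $A'$. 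The set $A\cap A'$ is convex and contains $\fs$, hence is an intersection of half-apartments each containing $\fs$; let $H_{\alpha,k}$ be the wall parallel to $H$ across which $A'$ leaves $A$ on the side of $x$ (if there is none, then $x$ lies on the $\fs$-side in the $\alpha$-direction, so $y'=y$ and the inequality is trivial). Because $\fs\subseteq A\cap A'\subseteq H_{\alpha,k}^{+}$ while $\fs$ meets $H_{\alpha,0}$, we get $k\le 0$; and since $x$ lies beyond this fold, $\langle y,\alpha\rangle<k\le 0$.

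The heart of the proof is to build an apartment $B$ containing a subsector of $\fs'$ together with $x$, by re-folding $A'$ across $H_{\alpha,k}$: Lemma~\ref{lem:config} lets me glue the half-apartment of $A'$ on the $x$-side of $H_{\alpha,k}$ to the half-apartment of $A$ on the $\fs'$-side, producing such a $B$ with $A\cap B$ equal to the latter half-apartment. Then $\rho_{A,\fs'}|_{B}$ is the isometry fixing $A\cap B$, and tracing $x$ through it gives $y'=s_{\alpha,k}(y)$, the reflection of $y$ in $H_{\alpha,k}$; in particular $y$ and $y'$ have the same projection onto $H$ and differ only along $\alpha^{\vee}$. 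Granting this, $d(y,y')=\|y-s_{\alpha,k}(y)\|=(k-\langle y,\alpha\rangle)\,\|\alpha^{\vee}\|$, whereas $2\,d(y,H)=\|y-s_{\alpha,0}(y)\|=(-\langle y,\alpha\rangle)\,\|\alpha^{\vee}\|$, so that $d(y,y')\le 2\,d(y,H)$ is exactly the inequality $k\le 0$ established above.

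The step I expect to be the main obstacle is the construction of $B$ and the identification $y'=s_{\alpha,k}(y)$. The delicate point is that $A\cap A'$ is in general bounded by several walls, and folds across walls \emph{not} parallel to $H$ (for instance the other wall of $\fs$) must be shown to leave $y'$ unchanged, so that only the single parallel fold $H_{\alpha,k}$ contributes. I would control this exactly as in the proof of Lemma~\ref{lem:crucial}: the relevant configuration of half-apartments and sectors is isometric to a subset of the Coxeter complex (\cite[\S11.5]{AB}), on which the retraction $\rho_{A,\fs'}$ genuinely acts as the reflection $s_{\alpha,k}$.
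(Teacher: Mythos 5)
Your strategy---computing $\rho_{A,\fs'}(x)$ exactly, as a single folded image of $\rho_{A,\fs}(x)$---is genuinely different from the paper's, and the target formula ($y'=s_{\alpha,k}(y)$ on one side of the fold, $y'=y$ on the other) is in fact true; but the proposal has a genuine gap at exactly the step you flag as the main obstacle, and the construction you offer for it fails. The half-apartment $M'$ of $A'$ on the $x$-side of the fold and the half-apartment of $A$ on the $\fs'$-side of $H_{\alpha,k}$ do \emph{not} share a common wall: the wall of $A'$ bounding $M'$ and the wall $H_{\alpha,k}$ of $A$ coincide only along $H_{\alpha,k}\cap A\cap A'$, which is a proper piece of the wall precisely when $A\cap A'$ also has bounding walls not parallel to $H$ (your ``delicate'' case). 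In that situation no apartment contains the union of the two half-apartments at all, so a $B$ with $A\cap B$ equal to ``the latter half-apartment'' cannot exist. Concretely, in a thick $\tilde A_2$ building take $A'$ with $A\cap A'=H^+_{\alpha_1,-2}\cap H^+_{\alpha_2,0}$, a sector with corner vertex $p=-2\omega_1$: in the link of $p$ (a generalized $3$-gon) the germs of your two half-apartments are two $3$-edge paths sharing exactly one endpoint, hence $7$ vertices in total, which cannot lie in a hexagon, i.e.\ in an apartment of the link. The closing appeal to the method of Lemma~\ref{lem:crucial} does not repair this, since that lemma produces an apartment containing a chamber and an intersection of half-apartments; it does not identify $\rho_{A,\fs'}$ with a reflection. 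Finally, your ``trivial'' case is not trivial: if $x$ lies on the $\fs$-side of the parallel fold but beyond non-parallel folds, then $x\notin A$, and proving $y'=y$ still requires producing an apartment containing $x$ and a subsector of $\fs'$ and tracing the isomorphism.

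The argument can be repaired: apply Lemma~\ref{lem:config}(3) not to a half-apartment of $A$ but to the pair $(M',\ft)$, where $\ft$ is the subsector of $\fs'$ based at a point $z\in\fs'\cap A\cap A'$ with $\langle z,\alpha\rangle=k$ (such $z$ exists because every bounding root $\beta\neq\alpha$ of $A\cap A'$ satisfies $s_\alpha\beta\in R^+$); then $M'\cap\ft$ is exactly a sector panel of $\ft$, Lemma~\ref{lem:config}(3) gives an apartment $B\supseteq M'\cup\ft$, and a rigidity argument (two type-preserving isomorphisms of half-apartments agreeing on the boundary wall and mapping to the same side are equal) yields $\rho_{A,\fs'}|_{M'}=s_{\alpha,k}\circ\psi$; the same gluing applied to the other half $\psi^{-1}(H^+_{\alpha,k})$ gives $y'=y$ there. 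It is worth comparing this with the paper's proof, which sidesteps the entire structure theory of $A\cap A'$: it takes the point $z\in H'$ nearest to $x$, where $H'$ is the wall of $A'$ with $\psi(H')=H$, applies Lemma~\ref{lem:config}(3) once---to the half-apartment $H'_+$ of $A'$ containing $\fs$ and the full sector $\fs'$, which do intersect exactly in the common sector panel of $\fs$ and $\fs'$---to conclude that the two retractions agree at $z$, and then finishes with the triangle inequality through $z$ and the fact that retractions do not increase distances. That soft argument yields the bound $2\,d\bigl(\rho_{A,\fs}(x),H\bigr)$ without ever computing $\rho_{A,\fs'}(x)$, which is all the lemma requires.
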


\begin{proof}
Let $A'$ be an apartment containing $x$ and $\fs$ (such an apartment exists by hypothesis). 
Let $\psi:A'\to A$ be the isomorphism from (B2) fixing $A'\cap A$ pointwise. Let $H'$ be the wall of 
$A'$ with $\psi(H')=H$. Let $H'_+$ be the half apartment of $A'$ containing $\fs$. By 
Lemma~\ref{lem:config} there exists an apartment $A''$ containing $H'_+\cup \fs'$.

Let $z\in H'$ be a point with $d(x,z)=d(x,H')$ (recall that $x$ and $H'$ lie in the 
apartment $A'$). Then $\rho_{A,\fs}(z)=\rho_{A,\fs'}(z)$ (as $z\in A''$), and
\begin{align*}
d\bigl(\rho_{A,\fs}(x),\rho_{A,\fs'}(x)\bigr)
&\leq d\bigl(\rho_{A,\fs}(x),\rho_{A,\fs}(z)\bigr)+d\bigl(\rho_{A,\fs}(z),\rho_{A,\fs'}(x)\bigr)\\
&=d\bigl(\rho_{A,\fs}(x),\rho_{A,\fs}(z)\bigr)+d\bigl(\rho_{A,\fs'}(z),\rho_{A,\fs'}(x)\bigr)\\
&\leq d(x,z)+d(z,x)=2d(x,z)\qquad\textrm{(\cite[Theorem~11.16]{AB}).}
\end{align*}
But $d(x,z)=d(x,H')=d\bigl(\rho_{A,\fs}(x),H\bigr)$ since $H=\psi(H')$ and 
$\rho_{A,\fs}(y)=\psi(y)$ for all $y\in A'$.\end{proof}

\begin{proof}[Proof of Theorem~\ref{thm:main}, {\rm (2)}\,$\Rightarrow$\,{\rm (3)}] 

It is logically sufficient to prove this implication under the assumption that $\fs=\fs_0$, 
and hence $\vect{h}_{\fs}=\vect{h}$. For each $v\in W_0$ define $\vect{h}_v:\Delta\to E$ by 
$
\vect{h}_v(x)=\rho_{\Sigma,v\fs_0}(x).
$
These are the retraction values from the $|W_0|$ possible sector retraction directions 
in the base apartment. Note that $\vect{h}_{1}=\vect{h}$, and in the notation of 
Definition~\ref{defn:horof}, $\vect{h}_v=\vect{h}_{v\fs_0}$.

Suppose that $(x_n)_{n\geq0}$ is a sequence in $\Delta$ with $d(x_n,x_{n+1})=o(n)$ and 
$\vect{h}(x_n)=n\mu+o(n)$. Let $w\in W_0$ be minimal length subject to $\mu\in w\fs_0$. 
Let $ww_0=s_{i_1}\cdots s_{i_{\ell}}$ be a reduced expression, and for each 
$0\leq k\leq \ell$ let $v_k=s_{i_1}\cdots s_{i_k}$ (so that $v_0=1$ and $v_{\ell}=ww_0$). 
We will prove the following.
\smallskip

\noindent\textit{Claim:} For each $0\leq k\leq \ell$ we have $\vect{h}_{v_k}(x_n)=n\mu+o(n)$, 
and there is an apartment $A_k$ containing $x_n$ and the sector $\ft_k=z_k+v_k\fs_0$ of $\Sigma$ 
based at a point
$$
z_k=\sum_{\{j\mid \langle\mu,v_k\alpha_j\rangle>0\}}\langle n\mu,v_k\alpha_j\rangle v_k\omega_j+o(n).
$$

\smallskip

Given the claim, we have $\vect{h}_{ww_0}(x_n)=n\mu+o(n)$ and there is an apartment $A_{\ell}$ 
containing $x_n$ and the sector $\ft_{\ell}=z_{\ell}+ww_0\fs_0$ of~$\Sigma$. Thus, by the 
definition of $\vect{h}_{ww_0}\,$, if $\psi:A_{\ell}\to\Sigma$ is the isomorphism fixing 
$A_{\ell}\cap\Sigma$ pointwise, then $\psi(x_n)=\vect{h}_{ww_0}(x_n)=n\mu+o(n)$, 
and $\psi(z_{\ell})=z_{\ell}$. Note that $z_{\ell}=o(n)$ because 
$
\langle\mu,v_{\ell}\alpha_j\rangle=\langle w^{-1}\mu,w_0\alpha_j\rangle\leq0
$
(since $w_0\alpha_j\in -R^+$ and $w^{-1}\mu\in \fs_0$). Hence by the definition of vector 
distances we have
$$
\vect{d}(z_{\ell},x_n)=\bigl(\psi(x_n)-\psi(z_{\ell})\bigr)^+=\bigl(n\mu+o(n)\bigr)^+
=n\lambda+o(n),
$$
where $\lambda$ is the dominant element of $W_0\mu$.
Therefore $\vect{d}(o,x_n)=n\lambda+o(n)$, since $\vect{d}(o,z_{\ell})=o(n)$. 
It remains to prove the claim.

\smallskip

\noindent\textit{Proof of the claim.} We argue by induction. For $k=0$, construct a 
gallery $\gamma$ from $o=x_0$ to $x_n$ by picking minimal galleries joining $x_{i-1}$ to 
$x_i$ ($1\leq i\leq n$) and joining them together. Since $\vect{h}(x_n)=n\mu+o(n)$ and 
$d(x_n,x_{n+1})=o(n)$, the image of this gallery under $\vect{h}$ cannot deviate too 
far from the geodesic segment $[0,n\mu]$ in $\Sigma$. In particular, the image of 
$\gamma$ can only cross hyperplanes $H_{\alpha,j}$ with $\alpha\in R^+$ and 
\begin{align}\label{eq:hyperplanes}
\begin{cases}
-o(n)\leq j\leq n\langle\mu,\alpha\rangle+o(n)&\textrm{if $\langle\mu,\alpha\rangle\geq0$}\\
n\langle\mu,\alpha\rangle-o(n)\leq j\leq o(n)&\textrm{if $\langle\mu,\alpha\rangle<0$}.
\end{cases}
\end{align}
Then by Lemma~\ref{lem:crucial} there is an apartment $A_0$ containing $\ft_0=z_0+\fs_0$ and 
$x_n\,$, because the point 
$
z_0=\sum_{\{j\mid \langle\mu,\alpha_j\rangle>0\}}\langle n\mu,\alpha_j\rangle\omega_j+o(n)
$ 
is on the positive side of all of the hyperplanes~(\ref{eq:hyperplanes}).

Suppose that the claim is true for some $k$ with $k<\ell$. Let $H$ be the hyperplane 
of~$\Sigma$ separating the sectors $\ft_k=z_k+v_k\fs_0$ and $\ft_k'=z_k+v_{k+1}\fs_0$ 
(these sectors are adjacent since they are translates of the adjacent sectors $v_k\fs_0$ 
and $v_{k+1}\fs_0$). By the induction hypothesis there is an apartment $A_k$ containing 
$\ft_k$ and $x_n\,$, and so by Lemma~\ref{lem:sector2} we have
\begin{align}\label{eq:cl}
d\bigl(\vect{h}_{v_k}(x_n),\vect{h}_{v_{k+1}}(x_n)\bigr)
\leq 2d\bigl(\vect{h}_{v_k}(x_n),H\bigr)=2d(n\mu,H)+o(n)
\end{align}
(since $\vect{h}_{v_k}(x_n)=n\mu+o(n)$ by the induction hypothesis).   
We have $H=z_k+H_{v_k\alpha_{i_{k+1}}}$, and so by the usual perpendicular distance formula
\begin{align}\label{eq:distform}
d(n\mu,H)=\frac{|\langle n\mu-z_k,v_k\alpha_{i_{k+1}}\rangle|}{\|v_k\alpha_{i_{k+1}}\|}.
\end{align}
Note that $\mu$ is on the positive side of each of the hyperplanes $H_{\alpha}$ with 
$\alpha$ in the \textit{inversion set} 
$R(ww_0)=\{\alpha\in R^+\mid (ww_0)^{-1}\alpha\in -R^+\}$, and that this inversion set 
is given explicitly by 
$R(ww_0)=\{\alpha_{i_1},v_1\alpha_{i_2},\ldots,v_{\ell-1}\alpha_{i_{\ell}}\}$ 
(see \cite[VI, \S6, Cor~2]{bourbaki}). Thus $\langle n\mu,v_k\alpha_{i_{k+1}}\rangle>0$, 
and so from the definition of $z_k$ we see that 
$\langle z_k,v_k\alpha_{i_{k+1}}\rangle=\langle n\mu,v_k\alpha_{i_{k+1}}\rangle+o(n)$. 
Therefore~(\ref{eq:distform}) gives $d(n\mu,H)=o(n)$, and so (\ref{eq:cl}) gives 
$\vect{h}_{v_{k+1}}(x_n)=\vect{h}_{v_k}(x_n)+o(n)=n\mu+o(n)$. 

Now repeat the $k=0$ argument for the new retraction direction $v_{k+1}\fs_0$. The image of 
the gallery $\gamma$ under $\vect{h}_{v_{k+1}}$ only crosses hyperplanes of the 
form (\ref{eq:hyperplanes}), and we see by Lemma~\ref{lem:crucial} that there is an 
apartment $A_{k+1}$ containing $x_n$ and the sector $\ft_{k+1}=z_{k+1}+v_{k+1}\fs_{0}$. 
This completes the  proof of the claim.
\end{proof}

We now turn to the final implication (3) $\Rightarrow$ (1).

\begin{lemma}\label{lem:ineq} Let $o,a,b\in \Delta$ (in fact in any $\mathrm{CAT}(0)$ space). 
Let $t_1,t_2\in[0,1]$. Let $p(t_1)\in[o,a]$ with $d\bigl(o,p(t_1)\bigr)=t_1\,d(o,a)$ and let 
$q(t_2)\in[o,b]$ with $d\bigl(o,q(t_2)\bigr)=t_2\, d(o,b)$. Then
$$
d^2\bigl(p(t_1),q(t_2)\bigr)\leq t_1(t_1-t_2)\,d^2(o,a)+t_2(t_2-t_1)\,d^2(o,b)+t_1t_2\,d^2(a,b).
$$
\end{lemma}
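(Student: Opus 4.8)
The plan is to apply the negative curvature inequality~(\ref{eq:negcurv}) twice, each time contracting one of the two geodesic segments toward $o$. Since $\Delta$ is $\mathrm{CAT}(0)$ this inequality is available for arbitrary test points, and the entire argument reduces to a two-step substitution followed by collection of like terms; there is no genuine geometric obstacle here, only careful bookkeeping.

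First I would apply~(\ref{eq:negcurv}) with $x=o$, $y=a$, and parameter $t_1$, so that $p(t_1)$ is the designated point on $[o,a]$, taking the test point to be $z=q(t_2)$. This yields
$$
d^2\bigl(q(t_2),p(t_1)\bigr)\leq (1-t_1)\,d^2\bigl(q(t_2),o\bigr)+t_1\,d^2\bigl(q(t_2),a\bigr)-t_1(1-t_1)\,d^2(o,a).
$$
The first term on the right is immediate: since $d\bigl(o,q(t_2)\bigr)=t_2\,d(o,b)$ we have $d^2\bigl(q(t_2),o\bigr)=t_2^2\,d^2(o,b)$. The only remaining quantity to estimate is $d^2\bigl(q(t_2),a\bigr)$.

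For that term I would apply~(\ref{eq:negcurv}) a second time, now with $x=o$, $y=b$, and parameter $t_2$ (so that $q(t_2)$ is the designated point on $[o,b]$), taking the test point to be $z=a$:
$$
d^2\bigl(a,q(t_2)\bigr)\leq(1-t_2)\,d^2(a,o)+t_2\,d^2(a,b)-t_2(1-t_2)\,d^2(o,b).
$$
Substituting both bounds into the first inequality and collecting the coefficients of $d^2(o,a)$, $d^2(o,b)$, and $d^2(a,b)$ gives the claimed estimate: the $d^2(o,a)$-coefficient is $t_1(1-t_2)-t_1(1-t_1)=t_1(t_1-t_2)$, the $d^2(o,b)$-coefficient is $(1-t_1)t_2^2-t_1t_2(1-t_2)=t_2(t_2-t_1)$, and the $d^2(a,b)$-coefficient is $t_1t_2$.

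The only point requiring attention is the correct assignment of roles in the two applications: using $z=q(t_2)$ as the test point against the contracted segment $[o,a]$ in the first step, and $z=a$ against the contracted segment $[o,b]$ in the second, rather than the reverse. With these choices the two cross contributions to $d^2(o,b)$ combine exactly into $t_2(t_2-t_1)$, and the remainder of the argument is a routine algebraic simplification.
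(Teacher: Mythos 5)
Your proof is correct and is essentially the paper's own argument: two nested applications of the negative curvature inequality~(\ref{eq:negcurv}), with yours differing only by the symmetric swap of roles (the paper first contracts $[o,b]$ against the test point $p(t_1)$ and then handles the resulting terms via $[o,a]$, while you contract $[o,a]$ against $q(t_2)$ first and finish via $[o,b]$). Since the statement is symmetric under exchanging $(a,t_1)\leftrightarrow(b,t_2)$, the two arguments are identical up to relabelling, and your use of the hypothesis $d\bigl(o,q(t_2)\bigr)=t_2\,d(o,b)$ in place of the paper's formal third application of~(\ref{eq:negcurv}) with $(x,y,z)=(o,a,o)$ is an equivalent (in fact slightly cleaner) step.
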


\begin{proof} By (\ref{eq:negcurv}) with $(x,y,z)=\bigl(o,b,p(t_1)\bigr)$ we have
\begin{align*}
d^2\bigl(p(t_1),q(t_2)\bigr)\leq(1-t_2)\,d^2\bigl(p(t_1),o\bigr)
+t_2\, d^2\bigl(p(t_1),b\bigl)-t_2(1-t_2)\,d^2(o,b).
\end{align*}
Apply (\ref{eq:negcurv}) to $d^2\bigl(p(t_1),o\bigr)$ with $(x,y,z)=(o,a,o)$ and to 
$d^2\bigl(p(t_1),b\bigl)$ with $(x,y,z)=(o,a,b)$.
\end{proof}

\begin{lemma}\label{lem:h}
Suppose that $x,y,z\in \Delta$ with $\lambda=\vect{d}(z,x)=\vect{d}(z,y)$ and 
$[z,x]\cap[z,y]=\{z\}$. There is $C>0$ depending only on the direction of $\lambda$ 
(and not on its length) such that
$
d(x,y)\geq C \, d(z,x).
$
\end{lemma}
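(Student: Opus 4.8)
The plan is to bound below the angle at $z$ between the two geodesics and then invoke the $\mathrm{CAT}(0)$ comparison inequality~\eqref{eq:negcurv}. Since $d=\|\vect{d}\|$ we have $d(z,x)=d(z,y)=\|\lambda\|$, so the geodesic triangle with vertices $z,x,y$ is isosceles, and it suffices to produce a lower bound $\bar\angle_z(x,y)\ge\theta_0>0$ on the comparison angle at $z$, with $\theta_0$ depending only on the direction of $\lambda$.

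First I would place short initial segments of $[z,x]$ and $[z,y]$ in a common apartment. Choose $x'\in[z,x]$ and $y'\in[z,y]$ with $d(z,x')=d(z,y')=\varepsilon$ for small $\varepsilon>0$, and let $\rho$ be the support of $z$ (the simplex whose open realisation contains $z$). For $\varepsilon$ small the supports $\sigma$ of $x'$ and $\tau$ of $y'$ both have $\rho$ as a face, since each geodesic leaves $z$ through the star of $\rho$; in particular $z\in|\overline{\sigma}|\cap|\overline{\tau}|$. By building axiom (B1) some apartment $A$ contains $\sigma$ and $\tau$, hence also $z,x',y'$. Fix a type-preserving identification $\psi\colon A\to\Sigma\subseteq E$. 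Vector distance is linear along a geodesic, so $\vect{d}(z,x')=\tfrac{\varepsilon}{\|\lambda\|}\lambda$ and likewise for $y'$; since $\psi$ preserves vector distance this forces
$$
\psi(x')-\psi(z)=\tfrac{\varepsilon}{\|\lambda\|}\,w\lambda,\qquad \psi(y')-\psi(z)=\tfrac{\varepsilon}{\|\lambda\|}\,w'\lambda\qquad\text{for some }w,w'\in W_0.
$$
Only the linear $W_0$-action on difference vectors is used here, so this step makes no assumption on whether $z$ is a special vertex.

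Next I would extract the angle bound. If $w\lambda=w'\lambda$ then $[z,x']$ and $[z,y']$ issue from $z$ in the same direction and hence overlap on an initial segment by uniqueness of geodesics in the $\mathrm{CAT}(0)$ space $\Delta$, contradicting $[z,x]\cap[z,y]=\{z\}$; thus $w\lambda\ne w'\lambda$. Because $z,x',y'$ lie in the \emph{flat} apartment $A$, the triangle they span equals its own Euclidean comparison triangle, so $\bar\angle_z(x',y')=\angle(w\lambda,w'\lambda)\ge\theta_0$, where
$$
\theta_0:=\min\{\angle(u,u')\mid u,u'\in W_0\lambda,\ u\ne u'\}>0 .
$$
This $\theta_0$ is positive (a finite orbit has finitely many pairwise angles, none zero) and, being an angle between vectors, is unchanged under scaling $\lambda$, so it depends only on the direction of $\lambda$. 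Finally, comparison angles are monotone nondecreasing as the endpoints recede from $z$ (\cite[\S II.3]{bridson}), giving $\bar\angle_z(x,y)\ge\bar\angle_z(x',y')\ge\theta_0$.

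The conclusion is then immediate: the Euclidean law of cosines in the comparison triangle, together with $d(z,x)=d(z,y)=\|\lambda\|$, yields
$$
d(x,y)^2=2\|\lambda\|^2\bigl(1-\cos\bar\angle_z(x,y)\bigr)\ge 2\|\lambda\|^2\bigl(1-\cos\theta_0\bigr),
$$
so $d(x,y)\ge 2\sin(\theta_0/2)\,\|\lambda\|=C\,d(z,x)$ with $C=2\sin(\theta_0/2)>0$ depending only on the direction of $\lambda$. I expect the main obstacle to be the first step: arranging initial segments of the two geodesics inside a single apartment and certifying that their directions there are honest Weyl translates $w\lambda,w'\lambda$ of $\lambda$. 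Once this local picture is secured, the positivity of $\theta_0$ and the $\mathrm{CAT}(0)$ comparison are routine.
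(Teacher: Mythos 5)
Your proof is correct, but it follows a genuinely different route from the paper's. The paper puts the \emph{entire} geodesics $[z,x]$ and $[z,y]$ into a single apartment at once: it picks chambers $c\supseteq$ an initial piece of $[z,x]$ and $d\supseteq$ an initial piece of $[z,y]$, an apartment $A\supseteq c\cup d$, and applies the chamber-based retraction $\rho_{A,c}$, which (because $z\in c$) maps both geodesics isometrically to straight segments in $A$ with the same vector distances $\lambda$ from $z$, and which is $1$-Lipschitz globally; the bound $d(x,y)\geq d\bigl(\rho(x),\rho(y)\bigr)=d(w_1\lambda,w_2\lambda)=2\|\lambda\|\sin(\theta/2)$ is then pure Euclidean geometry inside $A$. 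You instead work only \emph{locally} at $z$ -- placing the points $x',y'$ at distance $\varepsilon$ in a common apartment via the star of the support of $z$, identifying the two initial directions as distinct elements $w\lambda\neq w'\lambda$ of the finite orbit $W_0\lambda$ -- and then propagate the resulting angle bound $\theta_0$ out to $x,y$ by the monotonicity of comparison angles in $\mathrm{CAT}(0)$ spaces plus the law of cosines. The trade-off: the paper's argument leans on building-specific machinery (properties of $\rho_{A,c}$: vector-distance preservation from the centre and the $1$-Lipschitz property) and needs no comparison-angle theory, while yours substitutes general $\mathrm{CAT}(0)$ comparison geometry for the retraction and so uses less about buildings beyond the local apartment structure at $z$; both yield the same constant $C=2\sin(\theta_0/2)$. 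The one step you leave informal -- that a small metric ball about $z$ lies in the open star of the support of $z$, so that $x'$ and $y'$ have supports admitting that support as a face -- is genuinely true (the wall arrangement in each apartment is locally finite, and the relevant distance $\delta(z)$ to the walls missing $z$ is the same in every apartment containing $z$), and it is of exactly the same nature as the step the paper itself takes without proof, namely that a chamber contains a non-zero initial piece of each geodesic; so your proof is complete at the same level of rigour as the original.
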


\begin{proof}
Choose a chamber $c$ containing a non-zero length initial piece of the geodesic $[z,x]$, 
and similarly let $d$ be a chamber containing a non-zero length initial piece of the 
geodesic $[z,y]$ (with the possibility that $c=d$). Let $A$ be an apartment containing 
$c$ and $d$ (using (B1)). Since $z\in c$, the (chamber based) retraction $\rho=\rho_{A,c}$ maps the 
geodesics $[z,x]$ and $[z,y]$ to geodesics in~$A$, and so 
$\vect{d}\bigl(z,\rho(x)\bigr)=\vect{d}\bigl(z,\rho(y)\bigr)=\lambda$. Furthermore, 
since $c,d\in A$, the hypothesis $[z,x]\cap[z,y]=\{z\}$ implies that the images of the 
geodesics $[z,x]$ and $[z,y]$ under $\rho$ are not equal. 

Let $\psi:A\to \Sigma$ be a type preserving isomorphism, and let $\mu=-\psi(z)$. 
Let $\theta=t_{\mu} \psi:A\to\Sigma$. From the definition of vector distances we have 
$\theta\bigl(\rho(x)\bigr)=w_1\lambda$ and $\theta\bigl(\rho(y)\bigr)=w_2\lambda$, 
and by the above observation $w_1\lambda\neq w_2\lambda$. Then (using \cite[Theorem~11.16]{AB})
$$
d(x,y)\geq d\bigl(\rho(x),\rho(y)\bigr)
=d\Bigl(\theta\bigl(\rho(x)\bigr),\theta\bigl(\rho(y)\bigr)\Bigr)=d(w_1\lambda,w_2\lambda).
$$
Euclidean geometry and the fact that $\|w\lambda\|=\|\lambda\|$ gives 
$
d(w_1\lambda,w_2\lambda)=2\|\lambda\|\sin(\theta/2),
$
where $\theta$ is the angle between $w_1\lambda$ and $w_2\lambda$. But $\|\lambda\|=d(z,x)$, and 
the result follows since $w_1\lambda\neq w_2\lambda$ and $W_0$ is finite.
\end{proof}

\begin{proof}[Proof of Theorem~\ref{thm:main}, {\rm (3)}\,$\Rightarrow$\,{\rm (1)}]
Let $(x_n)_{n\geq 0}$ be a sequence in $\Delta$ with $d(x_n,x_{n+1})=o(n)$ and 
$\vect{d}(o,x_n)=n\lambda+o(n)$, with $\lambda\in E^+$. If $\lambda=0$ then 
$(x_n)_{n\geq0}$ is a $0$-regular sequence, and we are done. So suppose that $\lambda\neq 0$. 
It is clear that there is a sequence $(y_n)_{n\geq 0}$ such that $d(x_n,y_n)=o(n)$ and 
$\vect{d}(o,y_n)=n\lambda$. We show that $(y_n)_{n\geq0}$ is $\lambda$-regular, and 
thus $(x_n)_{n\geq 0}$ is $\lambda$-regular, too.

First we construct a $\lambda$-ray $\fr:[0,\infty)\to \Delta$. Let $t\geq 0$ be fixed. 
Since $\la\neq 0$ there is $N_t>0$ such that $d(o,y_n)>t\|\lambda\|$ for all $n>N_t$. If $n>N_t$ 
let $y_n(t)$ be the point on the geodesic $[o,y_n]$ with $d\bigl(o,y_n(t)\bigr)=t\|\lambda\|$. 
Lemma~\ref{lem:ineq} with $(o,a,b)=(o,y_n,y_{n+1})$ and $t_1=t\|\lambda\|/|y_n|$ and
$t_2= t\|\lambda\|/|y_{n+1}|)$ (where $|x|=d(o,x)$) yields
$$
d^2\bigl(y_n(t),y_{n+1}(t)\bigr)\leq t^2\|\lambda\|^2
\left(\frac{d^2(y_n,y_{n+1})}{|y_n|\,|y_{n+1}|}
-\left(\frac{|y_n|}{|y_{n+1}|}-\frac{|y_{n+1}|}{|y_n|}\right)^2\right).
$$
Since $d(y_n,y_{n+1})=o(n)$ and $|y_n|=n\|\la\|$ it follows that
$$
\lim_{n\to\infty}d\bigl(y_n(t),y_{n+1}(t)\bigr)=0\qquad\textrm{for each fixed $t\geq0$}.
$$
But $y_j(t)$ is a point in the building with $\vect{d}(o,y_j(t))=t\lambda$, and so the set $\{\vect{d}(y_i(t),y_j(t))\mid i,j\geq N_t\}$ is a finite set. Thus there 
is an index $N_t'>N_t$ such that $y_n(t)=y_m(t)$ for all $m,n>N_t'$. Denote this 
stabilised point by $\fr(t)$. Then $\fr(t)\in[o,y_n]$ for all $n>N_t'$. This defines a 
$\lambda$-ray $\fr:[0,\infty)\to \Delta$.

Next we show that $d\bigl(y_n,\fr(n)\bigr)=o(n)$. Let $T$ be the (metric) tree
$$
T=\bigcup_{n\geq 0}[o,y_n]\subseteq \Delta.
$$
The \textit{confluent} $x\curlywedge y$ of $x,y\in T$ is defined by $
[o,x]\cap [o,y]=[o,x\curlywedge y]
$ (note that $x\curlywedge y\in T$). 
Define a metric $d_T$ on $T$ by
$
d_T(x,y)=d(x,x\curlywedge y)+d(x\curlywedge y,y)$ for all $x,y\in T$. Then we have
\begin{align}\label{eq:confluent}
d_T(o,x\curlywedge y)=\frac{1}{2}\Bigl(d_T(o,x)+d_T(o,y)-d_T(x,y)\Bigr)\quad\textrm{for $x,y\in T$}.
\end{align}
This quantity is often called the \textit{Gromov product} of $x$ and $y$ at~$o$. We have $d_T(o,x)=d(o,x)$ for all $x\in T$, and we will show below that 
$d_T(y_n,y_{n+1})=o(n)$ (initially it is only clear that $d_T(y_n,y_{n+1})\geq d(y_n,y_{n+1})=o(n)$).
Assuming this for now, equation (\ref{eq:confluent}) with $x=y_n$ and $y=y_{n+1}$ gives
$d(o,y_n\curlywedge y_{n+1})=n\|\lambda\|+o(n).$

Note that $T$ contains (the image of) the $\lambda$-ray $\fr$. Define the confluent 
$x\curlywedge \fr$ of $x\in T$ with $\fr$ by the equation $[o,x]\cap \fr=[o,x\curlywedge \fr]$. 
Then, exactly as in \cite[\S~2C]{CKW}, we have
\begin{align*}
n\|\lambda\|=|y_n|\geq d(o,y_n\curlywedge \fr)
&=\lim_{m\to\infty}d(o,y_n\curlywedge y_m)\geq
\inf_{i\geq n}d(o,y_i\curlywedge y_{i+1})=n\|\lambda\|+o(n).
\end{align*}
Thus $d(o,y_n\curlywedge \fr)=n\|\lambda\|+o(n)$, and so
$$
d\bigl(y_n,\fr(n)\bigr)\leq 
d\bigl(y_n,y_n\curlywedge \fr\bigr)+d\bigl(y_n\curlywedge \fr,\fr(n)\bigr)=o(n).
$$
Hence $(y_n)_{n\geq 0}$ is $\lambda$-regular.

It remains to show that $d_T(y_n,y_{n+1})=o(n)$. It suffices to show that 
$d(y_n\curlywedge y_{n+1},y_n)=o(n)$. Let $y_n'\in[o,y_{n+1}]$ be the point with 
$\vect{d}(o,y_n')=n\lambda$. Then $d(y_n\curlywedge y_n',y_n)=d(y_n\curlywedge y_{n+1},y_n)+O(1)$.
By Lemma~\ref{lem:h} (with $z=y_n\curlywedge y_n'$, $x=y_n$, and $y=y_n'$) we have
$
d(y_n,y_n')\geq C\,d(y_n\curlywedge y_n',y_n)
$
for some constant $C>0$ depending only on the direction of $\lambda$. 
Hence $d(y_n\curlywedge y_{n+1},y_n)=o(n)$, completing the proof.
\end{proof}

\section{Applications}\label{sect:applications}

In this section we exhibit applications of Theorem~\ref{thm:main} 
to random walks on groups acting on affine buildings, and to random walks on the 
buildings themselves (where there may be no group present).

\subsection{Random walks on groups acting on affine buildings}

Let $\Aut(\Delta)$ be the group of simplicial complex automorphisms of an affine 
building~$\Delta$ equipped with the topology of pointwise convergence. 
Thus $\Aut(\Delta)$ is a totally disconnected locally compact Hausdorff group, 
with a neighbourhood base at the identity given by the family of all pointwise 
stabilisers of finite sets of vertices. 

Let $G$ be a subgroup of $\Aut(\Delta)$, and let 
$\sigma$ be a Borel probability measure on~$G$. 
We will assume that the support of $\sigma$ generates $G$, and we say that $\sigma$ has 
\textit{finite first moment} if
$$
\int_Gd(o,go)\,d\sigma(g)<\infty.
$$
Let $(g_n)_{n\geq 1}$ be a stationary sequence of $G$-valued random variables with 
joint distribution~$\sigma$. The \textit{right random walk} is the sequence $(X_n)_{n\geq 0}$ with 
$$
X_0=o\qquad\textrm{and}\qquad X_n=g_1\ldots g_no\quad\textrm{for $n\geq 1$}.
$$
It follows from \cite[Theorem~2.1]{KM} that if 
$\sigma$ has finite first moment then there is a unit speed geodesic $\gamma:[0,\infty)\to\Delta$ 
and a number $a\geq 0$ such that 
\begin{align}\label{eq:boundary}
\lim_{n\to\infty}\frac{1}{n}d\bigl(X_n,\gamma(an)\bigr)=0\qquad\textrm{almost surely}.
\end{align}
Therefore $(X_n)_{n\geq 0}$ is almost surely $\lambda$-regular, where 
$\lambda=\vect{d}\bigl(\gamma(0),\gamma(a)\bigr)$. Thus Theorem~\ref{thm:main} immediately gives
the following.

\begin{thm}\label{thm:4.1}
Let $G$ and $\sigma$ be as above, and suppose that $\sigma$ has finite first moment. 
Let $(X_n)_{n\geq 0}$ be the associated right random walk on~$\Delta$. Then there exists 
$\lambda\in E^+$ such that
$$
\lim_{n\to\infty}\frac{1}{n}\vect{d}(o,X_n)=\lambda\qquad\textrm{almost surely},
$$
and for each sector $\fs$ of $\Delta$ there exists $\mu_{\fs}\in W_0\lambda$ such that 
$$
\lim_{n\to\infty}\frac{1}{n}\vect{h}_{\fs}(X_n)=\mu_{\fs}\qquad\textrm{almost surely}.
$$
\end{thm}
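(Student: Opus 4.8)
The plan is to read off the two convergences from Theorem~\ref{thm:main} and then to upgrade the limits from random to deterministic by an ergodicity argument. By \eqref{eq:boundary} and the remark following it, finite first moment guarantees that, almost surely, $(X_n)_{n\geq0}$ is $\lambda$-regular with $\lambda=\vect{d}(\gamma(0),\gamma(a))$, where $\gamma$ and $a$ are supplied by the Karlsson--Margulis theorem. Feeding this into the implications $(1)\Rightarrow(3)$ and $(1)\Rightarrow(2)$ of Theorem~\ref{thm:main} gives, almost surely, $\tfrac1n\vect{d}(o,X_n)\to\lambda$ and $\tfrac1n\vect{h}_{\fs}(X_n)\to\mu_{\fs}$ for some $\mu_{\fs}\in W_0\lambda$. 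At this stage $\lambda$ and $\mu_{\fs}$ are still measurable functions of the trajectory, and the whole content of the theorem is that they are almost surely constant.

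For $\lambda$ I would argue by shift-invariance of the increment sequence $(g_n)_{n\geq1}$. Writing $\theta$ for the shift, one has $X_{n+1}=g_1\,(X_n\circ\theta)$, and since $g_1$ acts as an isometry the vector distance is $G$-invariant (the finitely many possible type permutations are harmless after passing to the $W_0$-orbit). Using in addition that $\vect{d}(\cdot,z)$ is Lipschitz for the Euclidean norm -- which follows from the fact that the folding map $v\mapsto v^+$ is $1$-Lipschitz on $E$ together with the fact that building retractions do not increase distance -- one gets $\|\vect{d}(o,X_{n+1})-\vect{d}(o,X_n\circ\theta)\|\leq C\,d(o,g_1o)=O(1)$. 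Dividing by $n$ yields $\lambda\circ\theta=\lambda$ almost surely, so $\lambda$ is a shift-invariant random vector; as the driving sequence is ergodic (e.g.\ i.i.d.), $\lambda$ therefore equals a constant element of $E^+$ almost surely, its norm being the deterministic linear drift $\lim\tfrac1n d(o,X_n)$ furnished by Kingman's subadditive ergodic theorem.

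The determination of $\mu_{\fs}$ is the crux, and I expect it to be the main obstacle, because the shift-invariance argument breaks down here: the identity $\vect{h}_{\fs}(gx)=\vect{h}_{g^{-1}\fs}(x)$ shows that applying $g_1$ replaces the fixed reference sector $\fs$ by the moved sector $g_1^{-1}\fs$, and Busemann functions attached to sectors pointing in genuinely different directions need not differ by a bounded amount. Tracing the proof of $(1)\Rightarrow(2)$ one sees that $\mu_{\fs}=w\lambda$, where $w\in W_0$ is the relative position -- inside a common apartment furnished by (S2) -- of the exit direction of $(X_n)$ and the boundary chamber of $\fs$. Thus the task reduces to showing that this relative position is almost surely a single element of $W_0$, that is, that the exit direction is almost surely in \emph{general position} with respect to $\fs$. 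I would establish this through the boundary behaviour of the walk: the exit direction induces a $\sigma$-stationary measure on the chambers of the spherical building at infinity, and one must show that this measure assigns no mass to the proper Schubert cells determined by the fixed chamber of $\fs$. The clean model is the tree, where either the harmonic measure is non-atomic (so the exit end avoids the fixed end and the position is the generic one), or $G$ virtually fixes an end and the relative position is pinned down by the one-dimensional strong law applied to the Busemann cocycle. In the higher-rank case I would reproduce this dichotomy by a maximal-atom argument: the chambers of maximal harmonic mass form a finite $G$-invariant set, which either forces the relevant non-charging (hence a deterministic $\mu_{\fs}$) or produces a $G$-invariant residue at infinity that reduces the problem to a factor of lower rank. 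Combining the cases gives, for every sector $\fs$, a deterministic $\mu_{\fs}\in W_0\lambda$, completing the proof.
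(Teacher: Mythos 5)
Your first paragraph is, in full, the paper's proof: finite first moment gives \eqref{eq:boundary} via Karlsson--Margulis, hence the walk is almost surely $\lambda$-regular with $\lambda=\vect{d}\bigl(\gamma(0),\gamma(a)\bigr)$, and the implications (1)$\Rightarrow$(3) and (1)$\Rightarrow$(2) of Theorem~\ref{thm:main} then yield both limits; the paper stops there. Where you diverge is in declaring that ``the whole content of the theorem is that $\lambda$ and $\mu_{\fs}$ are almost surely constant.'' The paper neither proves nor uses any such constancy: its $\lambda$ and $\mu_{\fs}$ are exactly the trajectory-dependent quantities produced by regularity. Indeed, under the paper's literal hypotheses constancy is false --- the increments are only assumed to form a stationary sequence, and for a non-ergodic mixture of two i.i.d. laws with different drift vectors the limit $\frac{1}{n}\vect{d}(o,X_n)$ genuinely takes two values with positive probability. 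This reading is also consistent with Corollary~\ref{cor:det}, where determinism (of $X_\infty$) is singled out as a conclusion only under the extra hypothesis $\mu_{\fs}=\lambda$ dominant. So the portion of your argument corresponding to the paper is complete and identical to it; everything afterwards is aimed at a stronger statement than the one the paper establishes.

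Concerning that stronger statement (which one would expect to hold for i.i.d.\ increments): your second paragraph is the standard shift-invariance/Kingman argument, but the parenthetical dismissal of type permutations is wrong as stated. A non-type-preserving $g\in\Aut(\Delta)$ satisfies $\vect{d}(gx,gy)=\iota\bigl(\vect{d}(x,y)\bigr)$ for the induced diagram automorphism $\iota$ of $E^+$, and $\iota(\lambda)$ need \emph{not} lie in $W_0\lambda$: in type $\tilde A_2$ the contragredient automorphism swaps $\omega_1$ and $\omega_2$, which lie in different $W_0$-orbits. Thus one only obtains the equivariance $\lambda=\iota_{g_1}(\lambda\circ\theta)$, not shift invariance, and repairing this (say, by passing to the finite-index type-preserving subgroup) requires a genuine argument. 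More importantly, your final paragraph on $\mu_{\fs}$ is a program rather than a proof: the maximal-atom analysis of the stationary measure on the building at infinity, the non-charging of the relevant Schubert cells, and the ``invariant residue / reduce to lower rank'' dichotomy are precisely the hard content, and none of it is carried out. If constancy of $\mu_{\fs}$ were actually part of the theorem, this would be a genuine gap; as a proof of what the paper asserts and proves, your first paragraph already suffices.
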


\begin{cor}\label{cor:det}
Let $\fs$ be a sector of $\Delta$, and let $G$, $\sigma$, $(X_n)_{n\geq 0}$, $\lambda$, and $\mu_{\fs}$ be as in Theorem~\ref{thm:4.1} 

If $\lambda\neq 0$ then $(X_n)_{n\geq 0}$ converges almost surely to a point $X_{\infty}$ 
of the visibility boundary. Moreover, if $\mu_{\fs}=\lambda\neq 0$ is dominant then $X_{\infty}$ is the equivalence class of any 
$\lambda$-ray contained in~$\fs$ (and is hence deterministic).
\end{cor}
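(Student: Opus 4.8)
The plan is to treat the two assertions separately, deducing both from the $\lambda$-regularity of $(X_n)$ that is guaranteed almost surely by Theorem~\ref{thm:4.1} together with Theorem~\ref{thm:main}. Fix a realisation for which $(X_n)$ is $\lambda$-regular, and let $\fr$ be a $\lambda$-ray with $\fr(0)=o$ and $d(X_n,\fr(n))=o(n)$; since $\lambda\neq0$ this is a genuine ray to infinity, and I write $\xi\in\partial\Delta$ for its ideal endpoint. For the first assertion I would show that $X_n\to\xi$ in the cone topology on $\overline{\Delta}$ (\cite[\S II.8.5]{bridson}), so that $X_\infty:=\xi$. The neighbourhood basis of $\xi$ is described by the unit-speed geodesics $[o,X_n]$ staying close, at a fixed radius, to the unit-speed ray $[o,\xi)$, so I would reparametrise $\fr$ to unit speed and compare $[o,X_n]$ with the subsegment $[o,\fr(n)]$ of $\fr$ using $\mathrm{CAT}(0)$ convexity of geodesics emanating from $o$. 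Because $d(o,X_n)=\|\vect{d}(o,X_n)\|=n\|\lambda\|+o(n)\to\infty$ and $d(X_n,\fr(n))=o(n)$, the convexity estimate $d(c_1(u),c_2(u))\le u\,d(X_n,\fr(n))$ for the rescaled geodesics $c_1,c_2:[0,1]\to\Delta$ (together with the routine reconciliation of the two radius parametrisations) forces the geodesics to agree at any fixed radius in the limit; this yields $X_n\to\xi$ and hence almost sure convergence to a boundary point.

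For the second assertion the task is to identify $\xi$ with the ideal point $\xi_\fs$ of the $\lambda$-rays contained in $\fs$ (all such rays are parallel, hence define a single $\xi_\fs$, so once $\xi=\xi_\fs$ the limit is deterministic). Using $\mu_\fs=\lambda$ I have $\vect{h}_\fs(X_n)=n\lambda+o(n)$, and since $\vect{h}_\fs=\psi_{A,\fs}\circ\rho_{A,\fs}$ is $1$-Lipschitz (retractions are nonexpanding, \cite[Theorem~11.16]{AB}), the estimate $d(X_n,\fr(n))=o(n)$ transfers this to the ray: $\vect{h}_\fs(\fr(n))=n\lambda+o(n)$. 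Next I would place $\fr$ in the configuration used in the proof of $(1)\Rightarrow(2)$: by \cite[Theorem~11.53]{AB} the ray $\fr$ lies in a sector $\ft$ of some apartment, by (S2) there are subsectors $\ft'\subseteq\ft$ and $\fs'\subseteq\fs$ in a common apartment $A'$, and on $A'$ the map $\vect{h}_\fs|_{A'}$ is a vector-distance-preserving isomorphism onto $E$ (this is exactly \eqref{eq:isor}). Let $\fr'$ be the $\lambda$-ray in $\ft'$ parallel to $\fr$; then $d(\fr(t),\fr'(t))$ is a bounded constant, so $\vect{h}_\fs(\fr'(n))=n\lambda+o(n)$ as well. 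As $\vect{h}_\fs|_{A'}$ is an affine vector-distance-preserving isomorphism, $\vect{h}_\fs\circ\fr'$ is an affine ray in $E$ with velocity $w\in W_0\lambda$; comparing with $n\lambda+o(n)$ forces $w=\lambda$, the dominant representative. Finally $\vect{h}_\fs|_{A'}$ carries the $\lambda$-rays of $\fs'$ to $\lambda$-rays of $\fs_0$ (it fixes $\fs'$ and maps $\fs$ onto $\fs_0$), and two $\lambda$-rays of $E$ with the common velocity $\lambda$ are parallel; pulling back through the isometry $\vect{h}_\fs|_{A'}$ shows $\fr'$ is parallel to the $\lambda$-rays of $\fs'$, whence $\xi=\xi_{\fs'}=\xi_\fs$ and $X_\infty=\xi_\fs$.

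The step I expect to be the main obstacle is the passage from the \emph{Busemann} information $\vect{h}_\fs(\fr(n))=n\lambda+o(n)$ to an honest \emph{asymptotic} statement about the geodesic $\fr$, that is, explaining why the hypothesis that $\mu_\fs$ is the \emph{dominant} element $\lambda$ (rather than some other element of $W_0\lambda$) pins the direction $\xi$ to be exactly that of $\fs$. The retraction $\vect{h}_\fs$ is only nonexpanding and may collapse distances, so the argument must be routed through an apartment $A'$ on which $\vect{h}_\fs$ is genuinely isometric; the care lies in producing such an $A'$ meeting both $\fr$ (via the parallel copy $\fr'$ sitting inside a sector, which also sidesteps cofinality trouble when $\lambda$ is singular) and a subsector of $\fs$, and in verifying that the velocity computation $w=\lambda$ uses dominance of $\lambda$ in an essential way. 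Everything else---the convexity estimate of the first part and the bookkeeping of parallel rays---is routine once this configuration is in place.
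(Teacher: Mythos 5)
Your proposal is correct, and while your treatment of the first assertion coincides with the paper's (the paper simply declares convergence in the visibility boundary to be immediate from \eqref{eq:boundary}; your $\mathrm{CAT}(0)$-convexity argument is the spelled-out version of exactly that), your identification of $X_\infty$ takes a genuinely different route. The paper normalises $\fr$ so that $\fr(0)$ is the base vertex of $\fs$, transfers the Busemann estimate to the ray by the same $1$-Lipschitz step you use, and then invokes Lemma~\ref{lem:crucial} a second time: for each $n$ it produces an apartment containing the point $\fr(n)$ together with the subsector of $\fs$ based at $\fs\bigl(n\lambda+o(n)\bigr)$, which yields $d\bigl(\fr(n),\fs(n\lambda)\bigr)=o(n)$; since $\fr$ and $t\mapsto\fs(t\lambda)$ emanate from the same point, convexity forces them to \emph{coincide}. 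You instead recycle the $(\mathrm{S2})$/\eqref{eq:isor} configuration from the proof of $(1)\Rightarrow(2)$ of Theorem~\ref{thm:main}: the parallel copy $\fr'$ of $\fr$ inside the apartment $A'$ containing $\ft'$ and $\fs'$, followed by an affine velocity computation in $E$, concluding that $\fr'$ is \emph{parallel} to the $\lambda$-rays of $\fs'$ rather than equal to a canonical one. The paper's route buys the stronger exact statement $\fr(t)=\fs(t\lambda)$; your route buys three things: you never need the base-point normalisation (which itself requires checking that the asymptotic ray based at the base vertex of $\fs$ is still a $\lambda$-ray sublinearly tracking $X_n$), you avoid re-running the gallery argument behind Lemma~\ref{lem:crucial}, and the role of the hypothesis that $\mu_\fs=\lambda$ is \emph{dominant} becomes completely transparent, namely it is precisely the matching of velocities in $E$. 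Two small points to tighten in your write-up: first, you should state explicitly why $\lambda$-rays contained in a subsector of $\fs_0$ have velocity exactly $\lambda$ (a straight ray with velocity in $W_0\lambda$ that remains in a translate of the dominant cone for all $t\geq 0$ must have dominant velocity), since this is what legitimises the final velocity comparison; second, $\vect{h}_\fs|_{A'}$ does not fix $\fs'$ pointwise in general --- it is $\rho_{A,\fs}|_{A'}$ that fixes $\fs'\subseteq A\cap A'$, after which $\psi_{A,\fs}$ carries it onto a subsector of $\fs_0$ --- but the conclusion you draw (the $\lambda$-rays of $\fs'$ map to $\lambda$-rays in $\fs_0$) is correct as stated.
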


\begin{proof}
If $\lambda\neq 0$ then convergence to a point of the 
visibility boundary is an immediate consequence of~(\ref{eq:boundary}). Suppose 
that $\mu_{\fs}=\lambda\neq 0$ is dominant. Let $\fr:[0,\infty)\to \Delta$ be a 
$\lambda$-ray such that $d\bigl(X_n,\fr(n)\bigr)=o(n)$ almost surely, and without 
loss of generality we may assume that $\fr(0)$ is the base vertex of~$\fs$. We 
claim that $\fr(t)=\fs(t\lambda)$ for all $t\geq 0$, where for $\nu\in E^+$ we 
write $\fs(\nu)$ for the unique point of $\fs$ with 
$\vect{d}\bigl(\fs(0),\fs(\nu)\bigr)=\nu$ (with $\fs(0)$ being the base point of $\fs$).

Since $d\bigl(X_n,\fr(n)\bigr)=o(n)$ and $\vect{h}_{\fs}(X_n)=n\lambda+o(n)$ almost surely, 
we have $\vect{h}_{\fs}\bigl(\fr(n)\bigr)=n\lambda+o(n)$. Lemma~\ref{lem:crucial} 
implies that there is an apartment containing the point $\fr(n)$ and the subsector of 
$\fs$ based at a point $\fs\bigl(n\lambda+o(n)\bigr)$. 
It follows that $d\bigl(\fr(n),\fs(n\lambda)\bigl)=o(n)$. But $\{\fr(t)\mid t\geq 0\}$ and 
$\{\fs(t\lambda)\mid t\geq 0\}$ are two rays orginiating at the same point, and hence 
$\fr(n)=\fs(n\lambda)$, see \cite[Proposition~II.8.2]{bridson} or \cite[Lemma~11.72]{AB}.
\end{proof}

\begin{cor}\label{cor:4.3}
Let $R$ be a reduced irreducible root system with coroot lattice $Q$ and coweight lattice~$P$, and let $L$ be a lattice with $Q\subseteq L\subseteq P$. Let $\mathbb{F}$ be a non-archimedean local field, and let $G=G(\mathbb{F})$ be the Chevalley group over~$\mathbb{F}$ with root datum $(R,L)$, as in Section~\emph{\ref{sect:ex}}. Let $o=K$ and let $g_1,g_2,\ldots$ be a stationary sequence in~$G$ with finite first moment.
\begin{itemize}
\item[\emph{(1)}] There are elements $\lambda_n\in L\cap E^+$ and $\mu_n\in L$ such that $g_1\cdots g_no\in Kt_{\lambda_n}K\cap Ut_{\mu_n}K$.
\item[\emph{(2)}] There exists $\lambda\in E^+$ and $w\in W_0$ such that $\lambda_n/n\to \lambda$ and $\mu_n/n\to w\lambda$ almost surely. 
\end{itemize}
\end{cor}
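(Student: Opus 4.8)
The plan is to read off both parts essentially directly from the structure theory recalled in Section~\ref{sect:ex}, using Theorem~\ref{thm:4.1} to supply the asymptotics.

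For part~(1) I would observe that $X_n:=g_1\cdots g_no=g_1\cdots g_nK$ is a vertex of $\Delta$ lying in $G/K$, and that the Cartan and Iwasawa decompositions~(\ref{eq:CIdecomp}), in the form valid for the root datum $(R,L)$ (so indexed by $L\cap E^+$ and by $L$ respectively), partition $G$ into the double cosets $Kt_{\lambda}K$ and the cosets $Ut_{\mu}K$. Since these are disjoint unions there are \emph{unique} $\lambda_n\in L\cap E^+$ and $\mu_n\in L$ with $g_1\cdots g_n\in Kt_{\lambda_n}K\cap Ut_{\mu_n}K$, which is part~(1). The point of passing through the building is to identify these lattice elements with the two geometric invariants of Section~\ref{sect:buildings}: by~(\ref{eq:vbuild}) the membership $g_1\cdots g_n\in Kt_{\lambda_n}K$ is equivalent to $\vect{d}(o,X_n)=\lambda_n$, and by~(\ref{eq:hbuild}) the membership $g_1\cdots g_n\in Ut_{\mu_n}K$ is equivalent to $\vect{h}(X_n)=\mu_n$, where $\vect{h}=\vect{h}_{\fs_0}$ is the vector Busemann function attached to the fundamental sector $\fs_0$.

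For part~(2) I would note that $(X_n)_{n\geq0}$ is precisely the right random walk on $\Delta$ determined by the stationary sequence $(g_n)_{n\geq1}$, so Theorem~\ref{thm:4.1} applies. Taking the sector $\fs=\fs_0$ there produces $\lambda\in E^+$ with $\tfrac1n\vect{d}(o,X_n)\to\lambda$ almost surely, together with some $\mu_{\fs_0}\in W_0\lambda$ satisfying $\tfrac1n\vect{h}_{\fs_0}(X_n)\to\mu_{\fs_0}$ almost surely. Combining these with the identifications of part~(1) gives $\lambda_n/n\to\lambda$ and $\mu_n/n\to\mu_{\fs_0}$ almost surely; writing $\mu_{\fs_0}=w\lambda$ for a suitable $w\in W_0$ yields the claim.

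The argument is thus largely a matter of translating between the group-theoretic decompositions and the building invariants, so I do not expect any substantial analytic obstacle: the genuine content has already been absorbed into Theorem~\ref{thm:main} (and hence Theorem~\ref{thm:4.1}). The only points needing care are bookkeeping ones. First, one must check that $\vect{d}(o,X_n)$ and $\vect{h}(X_n)$ actually land in $L\cap E^+$ and in $L$ (rather than merely in $P$), which holds because every vertex of $G/K$ has type in $\{\tau(\lambda)\mid\lambda\in L\}$. Second, one must confirm that $\vect{h}$ corresponds to the Iwasawa decomposition, which rests on the fact recalled in Section~\ref{sect:ex} that each $u\in U$ fixes a subsector of $\fs_0$. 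Finally, if one wishes to be scrupulous about the hypothesis in Theorem~\ref{thm:4.1} that the support of $\sigma$ generate $G$, one may simply replace $G$ by the closed subgroup it generates, which alters none of the conclusions.
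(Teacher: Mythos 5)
Your proposal is correct and follows essentially the same route as the paper's own proof: part (1) from the Cartan and Iwasawa decompositions together with the identifications (\ref{eq:vbuild}) and (\ref{eq:hbuild}) of $\vect{d}(o,X_n)$ and $\vect{h}(X_n)$, and part (2) by applying Theorem~\ref{thm:4.1} with $\fs=\fs_0$. Your added bookkeeping remarks (membership in $L$, why $\vect{h}$ matches the Iwasawa decomposition, and handling the support-generation hypothesis by passing to the closed subgroup generated by the support) are sound refinements of details the paper leaves implicit.
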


\begin{proof}
The first statement follows from the Cartan and Iwasawa decompositions~(\ref{eq:CIdecomp}). Then, from (\ref{eq:vbuild}) and (\ref{eq:hbuild}) we have $\vect{d}(o,g_1\cdots g_no)=\lambda_n$ and $\vect{h}(g_1\cdots g_no)=\mu_n$, and the result follows from Theorem~\ref{thm:4.1}.
\end{proof}

\begin{remark} The \textit{drift-free case} when $\lambda=0$ is rather subtle, even in 
the rank~1 case of homogeneous trees (see \cite{CKW,SB}). We discuss this case 
further in Section~\ref{sect:driftfree}.
\end{remark}

\subsection{Semi-isotropic random walks on affine buildings}

Let $\Delta$ be a locally finite irreducible affine building of type $(W,S)$ with vertex 
set~$V$. Let $L$ be a lattice with $Q\subseteq L\subseteq P$, and let
$$
V_L=\{v\in V\mid\tau(v)\in\tau(L)\},\qquad\textrm{where}\qquad 
\tau(L)=\{\tau(\lambda)\mid \lambda\in L\}.
$$
Then $V_Q$ is the set of all type $0$ vertices of $\Delta$, and $V_P$ is the set of 
so called \textit{special vertices} of $\Delta$ (in type $\tilde{A}_r$ one has $V_P=V$; 
otherwise $V_P$ is a strict subset of $V$). 

A Markov chain $(X_n)_{n\geq 0}$ on $V_L$ is an \textit{isotropic random walk} if the transition 
probabilities $p(x,y)$ $(x,y\in V_L$) of the random walk depend only on the vector distance 
$\vect{d}(x,y)$. That is,
$p(x,y)=p(x',y')$ whenever $\vect{d}(x,y)=\vect{d}(x',y')$. Isotropic random walks 
can be analysed in great detail by use of harmonic analysis and representation theory, 
see for example \cite{CW,P3,BS}. In particular, precise Local Limit 
Theorems, Central Limit Theorems, and Rate of Escape Theorems are available.

We consider the following more general situation. 

\begin{defn} A Markov chain $(X_n)_{n\geq 0}$ on $V_L$ is a \textit{semi-isotropic random walk} if 
the transition probabilities $p(x,y)$ of the walk depend only on the vectors $\vect{d}(x,y)$ 
and $\vect{h}(y)-\vect{h}(x)$. That is, $p(x,y)=p(x',y')$ whenever 
$\vect{d}(x,y)=\vect{d}(x',y')$ and $\vect{h}(y)-\vect{h}(x)=\vect{h}(y')-\vect{h}(x')$.
\end{defn}

Clearly, isotropic random walks are necessarily semi-isotropic, but not vice versa. 
We will apply Theorem~\ref{thm:main} to prove a Rate of Escape Theorem and 
convergence to the boundary results for semi-isotropic random walks. Note 
that in this setting we cannot apply \cite[Theorem~2.1]{KM} since our random 
walks may not be `group related'. Indeed there are $\tilde{A}_2$, 
$\tilde{C}_2$, and $\tilde{G}_2$ buildings with trivial automorphism group,
see \cite{ronanconstruction}. Even in higher rank, semi-isotropic random walks 
may be unrelated to group walks. 

A building $\Delta$ with chamber set $\cC$ is \textit{locally finite} if the 
cardinality $|\{d\in\cC\mid d\sim_i c\}|$ is finite for all $c\in\cC$ and all 
$i\in I=\{0,\ldots,r\}$, and $\Delta$ is \textit{thick} if this cardinality 
is always at least~$3$ (so that every panel lies on at least $3$ chambers). 
The building $\Delta$ is \textit{regular} if the above cardinality does not 
depend on $c\in\cC$, in which case we define the \textit{parameters} of the building by
$$
q_i+1=|\{d\in\cC\mid d\sim_i c\}|\qquad\textrm{for each $i\in I$}.
$$
By \cite[Corollary~2.2]{P1} we have $q_i=q_j$ whenever $s_i$ and $s_j$ are conjugate 
in~$W$. Furthermore, if the order $m_{ij}$ of $s_is_j$ is finite for all $i,j\in I$ 
then thickness implies regularity, see \cite[Theorem~2.4]{P1}. Thus, all irreducible 
thick affine buildings of rank at least~$3$ are regular.

Let $\Delta$ be an irreducible locally finite regular affine building, and let $L$ 
be a lattice with $Q\subseteq L\subseteq P$. In what follows the lattice $L$ can be 
chosen freely, with two exceptions where we must specify~$L$  precisely. We make 
the following convention in the present sub-section (where we use the standard 
labelling conventions for the simple roots of root systems as recorded in~\cite{bourbaki}):
\begin{align}\label{eq:convention}
\textit{If $\Delta$ is of type $\tilde{C}_r$ with $q_{0}\neq q_{r}$ 
or type $\tilde{A}_1$ with $q_{0}\neq q_{1}$, then we fix $L=Q$}.
\end{align}
For example, in the $\tilde{A}_1$ case with $q_0\neq q_1$, the building is a semi-homogeneous 
tree with alternating vertex valencies $q_0+1$ and $q_1+1$. The restriction $L=Q$ 
means that we only look at the vertices with degree $q_0+1$.

\begin{prop}\label{prop:proj} Let $\Delta$ be a locally finite regular irreducible affine 
building, and let $L$ be a lattice with $Q\subseteq L\subseteq P$, 
with the convention~(\ref{eq:convention}) in force. 
Let $(X_n)_{n\geq 0}$ be a semi-isotropic random walk on $V_L$ with transition 
probabilities $p(x,y)$. Then $(X_n)_{n\geq 0}$ is factorisable over $L$ relative to 
the decomposition of $V_L$ into the sets $H_{\mu}=\{x\in V_L\mid \vect{h}(x)=\mu\}$. 
Moreover, the factor walk $\overline{X}_n=\vect{h}(X_n)$ is a translation invariant 
random walk on the lattice~$L$. 

In other words, the value of the sum 
$$
\overline{p}(\lambda,\mu)=\sum_{y\in H_{\mu}}p(x,y)\qquad
\textrm{with $\lambda,\mu\in L$ and $x\in H_{\lambda}$}
$$
does not depend on the particular $x\in H_{\lambda}$, and 
$\overline{p}(\lambda+\nu,\mu+\nu)=\overline{p}(\lambda,\mu)$ for all $\lambda,\mu,\nu\in L$.
\end{prop}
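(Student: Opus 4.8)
The plan is to reduce both assertions to a single counting statement and to prove that statement with the sector retractions of Section~\ref{sect:buildings}. Since $(X_n)_{n\ge 0}$ is semi-isotropic there is a function $F$ with $p(x,y)=F\bigl(\vect{d}(x,y),\,\vect{h}(y)-\vect{h}(x)\bigr)$. For $x\in H_\lambda$ and $y\in H_\mu$ the second argument is $\mu-\lambda$, so grouping the vertices $y\in H_\mu$ by their vector distance $\nu=\vect{d}(x,y)$ gives
$$
\overline{p}(\lambda,\mu)=\sum_{\nu}M(x;\nu,\mu-\lambda)\,F(\nu,\mu-\lambda),\qquad
M(x;\nu,\eta):=\bigl|\{y\in V_L\mid \vect{d}(x,y)=\nu,\ \vect{h}(y)-\vect{h}(x)=\eta\}\bigr|,
$$
each term being finite since $\Delta$ is locally finite. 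Thus it is enough to show that $M(x;\nu,\eta)$ is independent of $x\in V_L$: independence as $x$ runs over $H_\lambda$ is exactly factorisability, and it then exhibits $\overline{p}(\lambda,\mu)$ as a function of $\mu-\lambda$ alone, which is translation invariance. I stress that this is the natural route because translation invariance cannot be obtained by moving $x$ with an automorphism -- $\Aut(\Delta)$ may be trivial.

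First I would re-express $M(x;\nu,\eta)$ through a sector retraction based at $x$. Each $x\in V_L$ is a special vertex, so let $\fs_x$ be the sector based at $x$ lying in the same parallel class as $\fs_0$, let $A_x$ be an apartment containing $\fs_x$, and let $\psi=\psi_{A_x,\fs_x}\colon A_x\to\Sigma$ be the vector-distance preserving coordinate with $\psi(x)=0$ and $\psi(\fs_x)=\fs_0$. Because $\fs_0$ and $\fs_x$ are asymptotic they share a subsector, and Proposition~\ref{prop:ft} then shows $\vect{h}_{\fs_0}$ and $\vect{h}_{\fs_x}$ differ by a constant; as $\vect{h}_{\fs_x}(x)=0$ this constant cancels and
$$
\vect{h}(y)-\vect{h}(x)=\vect{h}_{\fs_x}(y)-\vect{h}_{\fs_x}(x)=\psi\bigl(\rho_{A_x,\fs_x}(y)\bigr).
$$
Writing $w=\psi^{-1}(\eta)\in A_x$ this turns the Busemann constraint into $\rho_{A_x,\fs_x}(y)=w$, so that
$$
M(x;\nu,\eta)=\bigl|\{y\in V_L\mid \vect{d}(x,y)=\nu,\ \rho_{A_x,\fs_x}(y)=w\}\bigr|,
$$
a quantity internal to the configuration $(x,\fs_x,A_x)$.

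Then I would argue that this last count is independent of $x$. The configurations $(x,\fs_x,A_x)$ for different $x\in V_L$ are all type-preservingly modelled on the single pattern $(0,\fs_0,\Sigma)$, so the count is controlled entirely by apartment-level (Coxeter) data together with the way $\Delta$ branches off $A_x$. As $\Delta$ is regular this branching is governed uniformly by the parameters $q_i$, and the number of $y$ with prescribed $\vect{d}(x,y)=\nu$ and $\rho_{A_x,\fs_x}(y)=w$ is a universal function of $\nu$, of the position of $w$ (equivalently of $\eta$), and of $\{q_i\}$; this is precisely the type of retraction and gallery cardinality computed in \cite{P1}. Being expressed through these universal data, it is the same for every $x\in V_L$.

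The delicate point, and the main obstacle, is exactly this independence: a priori $M(x;\nu,\eta)$ could depend on the combinatorial species of the special vertex $x$, and this is what convention~(\ref{eq:convention}) controls. Outside the excluded cases every special vertex of $V_L$ is combinatorially interchangeable with every other, so the universal count is genuinely independent of $x$. In the excluded cases, however, $V_P$ contains two inequivalent kinds of special vertex whose defining boundary parameters ($q_0$ and $q_r$ for $\tilde{C}_r$, or $q_0$ and $q_1$ for $\tilde{A}_1$) are unequal and are not interchanged by any parameter-preserving symmetry; restricting to $L=Q$ keeps only the type-$0$ vertices and removes the ambiguity, after which the argument goes through unchanged.
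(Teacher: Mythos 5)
Your reduction is exactly the paper's: grouping $\sum_{y\in H_{\mu}}p(x,y)$ by the vector distance $\nu=\vect{d}(x,y)$ and pulling out the common value $F(\nu,\mu-\lambda)$ reproduces the paper's formula~(\ref{eq:formt}), and both factorisability and translation invariance do follow once the count $M(x;\nu,\eta)$ is known to be independent of $x\in V_L$. Your rewriting of $\vect{h}(y)-\vect{h}(x)$ as $\psi_{A_x,\fs_x}\bigl(\rho_{A_x,\fs_x}(y)\bigr)$ via the sector $\fs_x$ based at $x$ parallel to $\fs_0$ is also correct. The difference is that the paper does not attempt to prove the counting statement at all: it cites \cite[Lemma~3.19]{sph}, which asserts precisely that this cardinality is independent of $x\in V_L$ under convention~(\ref{eq:convention}). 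You instead try to prove it, and that is where the gap lies.

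The gap is in the step ``the configurations $(x,\fs_x,A_x)$ are all type-preservingly modelled on $(0,\fs_0,\Sigma)$, so the count is a universal function of $(\nu,\eta,\{q_i\})$ and hence independent of $x$.'' They are not type-preservingly modelled on one another: when $\tau(x)\neq 0$ (for instance a type-$r$ special vertex in type $\tilde{C}_r$, or any non-type-$0$ vertex of $V_P$), the isomorphism $\psi_{A_x,\fs_x}:A_x\to\Sigma$ carrying $\fs_x$ to $\fs_0$ must carry $x$ to $0$, so it induces a nontrivial permutation of the types, and hence of the parameters $q_i$ attached to the walls of $A_x$. Whether the branching data of $\Delta$ along $A_x$ --- and therefore the count --- is preserved under this identification is exactly the question of whether that induced diagram automorphism preserves the parameters. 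Your closing paragraph asserts this (``every special vertex of $V_L$ is combinatorially interchangeable with every other'' outside the excluded cases) but gives no argument; proving it requires identifying, type by type, which automorphisms arise and checking via \cite[Corollary~2.2]{P1} that the $q_i$ are constant on the relevant orbits --- automatic in types such as $\tilde{A}_r$, $\tilde{B}_r$, $\tilde{D}_r$, but needing the hypothesis $q_0=q_r$ (resp.\ $q_0=q_1$), or else the restriction $L=Q$, exactly in type $\tilde{C}_r$ (resp.\ $\tilde{A}_1$). Note also that the counts you appeal to in \cite{P1} are the purely Cartan (gallery) cardinalities, whereas what is needed here is the joint Cartan--Iwasawa count, which is the content of \cite[Lemma~3.19]{sph}. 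As written, the engine of your argument never uses the convention: applied verbatim to $V_P$ in a $\tilde{C}_r$ building with $q_0\neq q_r$ it would ``prove'' a false statement, which shows the interchangeability claim is not a cosmetic remark but the entire mathematical content of the proposition, and it is missing.
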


\begin{proof}
Let $L^+=L\cap E^+$. It is shown in \cite[Lemma~3.19]{sph} that if $\nu\in L^+$ and 
$\mu\in L$ then the cardinality
$$
c_{\nu,\mu}=|\{y\in V_L\mid \vect{d}(x,y)=\nu\textrm{ and }\vect{h}(y)-\vect{h}(x)=\mu\}|
$$
is independent of $x\in V_L$ (the convention~(\ref{eq:convention}) is crucial here). 
By the definition of semi-isotropic random walks we can write $p_{\nu,\mu}=p(x,y)$ when 
$\vect{d}(x,y)=\nu$ and $\vect{h}(y)-\vect{h}(x)=\mu$. Then, if $x\in H_{\lambda}$, we have
\begin{align}\label{eq:formt}
\sum_{y\in H_{\mu}}p(x,y)
&=\sum_{\nu\in L^+}\sum_{\;\,\{y\in H_{\mu}\mid \vect{d}(x,y)=\nu\}}p(x,y)
=\sum_{\nu\in L^+}p_{\nu,\mu-\lambda}\, c_{\nu,\mu-\lambda}\,.
\end{align}
Thus the value does not depend on the particular $x\in H_{\lambda}\,$, 
and the translation invariance is also immediate from this formula.
\end{proof}

\begin{thm}\label{thm:roe} Let $\Delta$ be a locally finite regular irreducible affine 
building, and let $L$ be a lattice with $Q\subseteq L\subseteq P$ 
(with the convention~(\ref{eq:convention}) in force). Let $(X_n)_{n\geq 0}$ be a 
semi-isotropic random walk on $V_L$ with transition probabilities $p(x,y)$, satisfying 
the first moment condition
\begin{align*}
\sum_{\nu\in L}\overline{p}(0,\nu)\|\nu\|<\infty.
\end{align*}
Then the sequence $(X_n)_{n\geq 0}$ is almost surely $\lambda$-regular, 
where $\lambda$ is the dominant element of $W_0\mu$, with 
$\mu=\sum_{\nu\in L}\overline{p}(0,\nu)\nu$. 
\end{thm}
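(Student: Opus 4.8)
The plan is to reduce to condition~(2) of Theorem~\ref{thm:main} applied to the sector $\fs=\fs_0$, so that $\vect{h}_{\fs}=\vect{h}$; since (2)$\Rightarrow$(1) in that theorem, it suffices to establish, almost surely, the Busemann estimate $\vect{h}(X_n)=n\mu+o(n)$ with $\mu\in W_0\lambda$ together with the step estimate $d(X_n,X_{n+1})=o(n)$.

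For the Busemann estimate I would invoke Proposition~\ref{prop:proj}: the factor process $\overline{X}_n=\vect{h}(X_n)$ is a translation-invariant random walk on the lattice $L$, so its increments $\overline{X}_{n+1}-\overline{X}_n$ are i.i.d.\ with common law $\overline{p}(0,\cdot)$, independent of the past. The hypothesis $\sum_{\nu\in L}\overline{p}(0,\nu)\|\nu\|<\infty$ is precisely finiteness of the first moment of this increment, whose mean is $\mu=\sum_{\nu}\overline{p}(0,\nu)\nu$. Kolmogorov's strong law of large numbers then yields $\tfrac1n\vect{h}(X_n)\to\mu$ almost surely, that is $\vect{h}(X_n)=n\mu+o(n)$ (recall $X_0=o$ lies in $H_0$). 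Since $\lambda$ is by definition the dominant element of $W_0\mu$ we have $\mu\in W_0\lambda$, exactly the membership required in~(2).

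For the step estimate I would first note that, by semi-isotropy together with the state-independence of the counts $c_{\nu,\mu}$ established in \cite[Lemma~3.19]{sph} (and already used in Proposition~\ref{prop:proj}), the increment pairs $\bigl(\vect{d}(X_n,X_{n+1}),\,\vect{h}(X_{n+1})-\vect{h}(X_n)\bigr)$ form an i.i.d.\ sequence whose law is independent of the current position. In particular the metric increments $Z_n=d(X_n,X_{n+1})=\|\vect{d}(X_n,X_{n+1})\|$ are i.i.d.\ and nonnegative, and \emph{provided} $\EE Z_0<\infty$ one has $\sum_n\PP(Z_n>\varepsilon n)=\sum_n\PP(Z_0>\varepsilon n)<\infty$ for every $\varepsilon>0$; Borel--Cantelli then gives $Z_n\le\varepsilon n$ eventually, hence $d(X_n,X_{n+1})=o(n)$ almost surely. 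Combining the two estimates and appealing to Theorem~\ref{thm:main}(2) would finish the argument.

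The main obstacle I anticipate is exactly this integrability $\EE Z_0=\sum_{\nu}\|\nu\|\sum_{\mu}p_{\nu,\mu}\,c_{\nu,\mu}<\infty$ of the metric step. This is a statement about the vector distance $\vect{d}$, whereas the stated first-moment hypothesis bounds the Busemann projection $\vect{h}$; since $\vect{h}(X_{n+1})-\vect{h}(X_n)$ always lies in $\conv\bigl(W_0\,\vect{d}(X_n,X_{n+1})\bigr)$, the only inequality one gets for free, namely $\|\vect{h}\text{-increment}\|\le\|\vect{d}\text{-increment}\|$, runs in the unhelpful direction. So the heart of the matter is to pass from the hypothesis to genuine integrability of the metric increment $Z_0$ --- this is where the precise reading of the first-moment assumption is decisive, and I would expect to need either a comparison of the two first moments valid for the regular buildings and convention~(\ref{eq:convention}) under consideration, or a reading of the hypothesis as integrability of $d(X_0,X_1)$ itself. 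Once finite first moment of $Z_0$ is secured, the strong law and Borel--Cantelli packaging above completes the proof.
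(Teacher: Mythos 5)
Your proposal is, in its positive part, precisely the paper's proof: Proposition~\ref{prop:proj} makes $\overline{X}_n=\vect{h}(X_n)$ a translation-invariant random walk on $L$ with i.i.d.\ increments of law $\overline{p}(0,\cdot)$, the strong law of large numbers gives $\vect{h}(X_n)=n\mu+o(n)$ almost surely, and one concludes via the implication {\rm (2)}$\Rightarrow${\rm (1)} of Theorem~\ref{thm:main} with $\fs=\fs_0$. Where you and the paper part ways is the step estimate: the paper's proof consists of the single unjustified clause ``Since $d(X_n,X_{n+1})=o(n)$\dots'', whereas you correctly isolate that this requires integrability of the metric increment $d(X_0,X_1)$, and that the displayed hypothesis only controls the smaller quantity $\|\vect{h}(X_1)-\vect{h}(X_0)\|\le d(X_0,X_1)$ (retractions are distance non-increasing).

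Your scepticism about this point is justified: the displayed condition alone does not yield the step estimate, and the theorem as literally stated fails without a genuinely metric moment assumption. Indeed, let $\Delta$ be the homogeneous tree of degree $q+1\geq 3$ (type $\tilde{A}_1$ with $q_0=q_1$, so $L=P$ is permitted by the convention~(\ref{eq:convention})), and consider the walk which from $x$ first picks $k\geq 1$ with probability $\pi_k\sim c/k^2$ and then jumps to a uniformly chosen vertex $y$ with $d(x,y)=2k$ and $\vect{h}(y)=\vect{h}(x)$. This is semi-isotropic, and the factor walk is constant, so the displayed first moment condition holds trivially with $\mu=0$; but the metric steps $Z_n=d(X_n,X_{n+1})$ are i.i.d.\ with $\EE Z_0=\infty$, so by the second Borel--Cantelli lemma $Z_n>n$ infinitely often almost surely. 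Since any $\lambda$-regular sequence satisfies $d(x_n,x_{n+1})\leq d\bigl(x_n,\fr(n)\bigr)+\|\lambda\|+d\bigl(\fr(n+1),x_{n+1}\bigr)=o(n)$, this walk is almost surely not $\lambda$-regular for any $\lambda$, contradicting the stated conclusion. The correct repair --- of your proof and of the paper's --- is exactly the second alternative you name: read (or strengthen) the first moment condition as $\EE\,d(X_0,X_1)=\sum_{y}p(o,y)\,d(o,y)<\infty$. This implies the displayed condition (again because retractions do not increase distances, \cite[Theorem~11.16]{AB}), so the law-of-large-numbers half of the argument is unaffected, and your i.i.d.-increments-plus-Borel--Cantelli packaging then supplies the step estimate. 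With that reading, your proof is complete, and is in fact more careful than the one in the paper.
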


\begin{proof}
The factor walk $\overline{X}_n=\vect{h}(X_n)$ is a translation invariant random walk on 
$L\cong \mathbb{Z}^r$ with finite first moment, and so the classical Law of Large Numbers 
applies. Thus
\begin{align}\label{eq:mudrift}
\lim_{n\to\infty}\frac{\vect{h}(X_n)}{n}=\mu
=\sum_{\nu\in L}\overline{p}(0,\nu)\nu\qquad\textrm{almost surely}.
\end{align}
Since $d(X_n,X_{n+1})=o(n)$ we have $\lambda$-regularity by Theorem~\ref{thm:main}.
\end{proof}

The following is now immediate from Theorems~\ref{thm:main} and~\ref{thm:roe} and the argument 
of Corollary~\ref{cor:det}.

\begin{cor} In the set-up of Theorem~\ref{thm:roe}, we have the Rate of Escape Theorem
$$
\lim_{n\to\infty}\frac{1}{n}\vect{d}(o,X_n)=\lambda\qquad\textrm{almost surely},
$$
where $\lambda$ is the dominant element of the $W_0$-orbit of 
$\mu=\sum_{\nu\in P}\overline{p}(0,\nu)\nu$.

If $\lambda\neq 0$ then $(X_n)_{n\geq 0}$ converges almost surely to a 
point~$X_{\infty}$ of the visibility boundary, and in the case when $\mu=\lambda\neq 0$ is dominant, the point $X_{\infty}$ is the 
equivalence class of the geodesic $\{t\lambda\mid t\geq 0\}$ in $\Sigma$ (and is 
hence deterministic).
\end{cor}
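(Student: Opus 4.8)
The plan is to read off all three assertions directly from the almost-sure $\lambda$-regularity supplied by Theorem~\ref{thm:roe}, passing it through Theorem~\ref{thm:main} and then re-running the geometry of Corollary~\ref{cor:det} with the fundamental sector $\fs_0$ in place of a general sector $\fs$. For the Rate of Escape the step is purely formal: Theorem~\ref{thm:roe} gives that almost surely $(X_n)_{n\geq0}$ is $\lambda$-regular, where $\lambda$ is the dominant element of $W_0\mu$ with $\mu=\sum_{\nu\in L}\overline{p}(0,\nu)\nu$, and the implication $(1)\Rightarrow(3)$ of Theorem~\ref{thm:main} then yields $d(X_n,X_{n+1})=o(n)$ together with $\vect{d}(o,X_n)=n\lambda+o(n)$, i.e.\ $\tfrac1n\vect{d}(o,X_n)\to\lambda$ almost surely.

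Next I would treat convergence to the boundary when $\lambda\neq0$. Being $\lambda$-regular means there is a $\lambda$-ray $\fr:[0,\infty)\to\Delta$ with $\fr(0)=o$ and $d(X_n,\fr(n))=o(n)$; since $\lambda\neq0$, $\fr$ is a genuine geodesic ray representing an ideal point $[\fr]\in\partial\Delta$. Write $\gamma(s)=\fr(s/\|\lambda\|)$ for its unit-speed reparametrisation, and let $\gamma_n$ be the unit-speed geodesic from $o$ to $X_n$. Both $\gamma$ and $\gamma_n$ issue from $o$, so by CAT(0) convexity the function $s\mapsto d(\gamma(s),\gamma_n(s))$ is convex and vanishes at $s=0$; hence for each fixed $r$ one has $d(\gamma(r),\gamma_n(r))\leq (r/T_n)\,d(\gamma(T_n),\gamma_n(T_n))$ with $T_n=\min(d(o,X_n),n\|\lambda\|)$. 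Because $d(o,X_n)=n\|\lambda\|+o(n)$ and $d(X_n,\fr(n))=o(n)$, the terminal discrepancy $d(\gamma(T_n),\gamma_n(T_n))$ is $o(n)$ while $T_n=n\|\lambda\|+o(n)$, so $d(\gamma(r),\gamma_n(r))\to0$ for every fixed $r$; by the definition of the cone topology on $\overline\Delta$ this is precisely $X_n\to[\fr]$. I regard this as the main point: in Corollary~\ref{cor:det} boundary convergence was quoted from~(\ref{eq:boundary}) (Karlsson--Margulis), but that requires a group acting, whereas a semi-isotropic walk need not be group-related, so here the same conclusion must instead be extracted from $\lambda$-regularity alone by the convexity estimate above.

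Finally, for determinism when $\mu=\lambda\neq0$ is dominant, I would use the drift already computed inside the proof of Theorem~\ref{thm:roe}: by~(\ref{eq:mudrift}) we have $\tfrac1n\vect{h}(X_n)\to\mu=\lambda$, hence $\vect{h}(X_n)=n\lambda+o(n)$. Since $\vect{h}=\rho_{\Sigma,\fs_0}$ is distance non-increasing (\cite[Theorem~11.16]{AB}), the tracking bound $d(X_n,\fr(n))=o(n)$ forces $\vect{h}(\fr(n))=n\lambda+o(n)$, which is exactly the hypothesis $\mu_{\fs}=\lambda$ of Corollary~\ref{cor:det} specialised to $\fs=\fs_0$. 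Its argument then applies verbatim: Lemma~\ref{lem:crucial} produces an apartment containing $\fr(n)$ and a subsector of $\fs_0$ based at $\fs_0(n\lambda+o(n))=n\lambda+o(n)$, giving $d(\fr(n),n\lambda)=o(n)$; as $\{\fr(t)\mid t\geq0\}$ and the geodesic $\{t\lambda\mid t\geq0\}$ in $\Sigma$ are two rays issuing from the common point $o$, uniqueness of the ray in a given class (\cite[Proposition~II.8.2]{bridson} or \cite[Lemma~11.72]{AB}) forces $\fr(t)=t\lambda$ for all $t$. Therefore $X_\infty=[\fr]$ is the equivalence class of $\{t\lambda\mid t\geq0\}$ and is deterministic. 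Apart from the boundary-convergence step, which carries the only genuine geometric content, everything here is bookkeeping and a direct transcription of Corollary~\ref{cor:det}.
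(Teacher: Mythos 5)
Your proposal is correct and takes essentially the same route as the paper, whose entire proof is the one-line observation that the corollary is immediate from Theorems~\ref{thm:main} and~\ref{thm:roe} together with the argument of Corollary~\ref{cor:det}. Your explicit $\mathrm{CAT}(0)$ convexity argument simply fills in the step the paper leaves implicit (sublinear tracking of a $\lambda$-ray with $\lambda\neq 0$ forces cone-topology convergence, with no appeal to Karlsson--Margulis, which is indeed unavailable here), and your determinism argument is the intended transcription of Corollary~\ref{cor:det} with $\fs=\fs_0$ using~(\ref{eq:mudrift}).
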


\subsection{The drift free case for semi-isotropic walks}\label{sect:driftfree}

In this section we investigate convergence properties of drift-free semi-isotropic random 
walks (that is, the subtle situation where the vector $\mu$ from Theorem~\ref{thm:roe} is $0$). 
We restrict to the case of nearest neighbour walks (see the definition below). In this case 
the random walk does not converge in the visibility boundary, however we obtain a weaker 
type of convergence, to an equivalence class of sectors in the \textit{combinatorial boundary} 
(see the definition below).

A random walk on the set $V_P$ of special vertices of an affine building~$\Delta$ is called 
\textit{nearest neighbour} if the transition probabilities of the random walk satisfy:
$$
\textrm{If $p(x,y)>0$ then $x$ and $y$ lie in a common chamber.}
$$
In type $\tilde{A}_r$ all vertices are special, and so a nearest neighbour 
random walk (as defined above) is the same as a nearest neighbour random walk 
(in the usual sense) on the $1$-skeleton of the building. 

\begin{remark}
For nearest neighbour random walks to be non-trivial we need each chamber to have at 
least two special vertices. This occurs if and only if there are at least $2$ different 
types of special vertices (since each chamber has a vertex of each type), and by the 
classification of root systems this occurs for all irreducible root systems other 
than those of type $E_8$, $F_4$, or $G_2$. Thus, the results of this section apply 
to affine buildings whose type is not one of the latter three.
For buildings of type $\tilde{E}_8$, $\tilde{F}_4$, or $\tilde{G}_2$ one could use a 
modified definition of `nearest neighbour' by requiring that if $p(x,y)>0$ then $x$ and 
$y$ lie in adjacent chambers. After slight modifications to the proof, 
the below Theorem~\ref{thm:roe0} also holds for these walks although we omit the details.
\end{remark}

The following proposition shows that nearest neighbour random walks on $V_P$ with 
full support are necessarily irreducible. 
 
\begin{prop}\label{prop:irreduciblewalk}
Let $\Delta$ be a locally finite affine building of type other than $\tilde{E}_8$, $\tilde{F}_4$ or $\tilde{G}_2$, and let $(X_n)_{n\geq 0}$ be a random walk on~$V_P$ with transition probabilities $p(x,y)$. Suppose that $p(x,y)>0$ whenever $x$ and $y$ are distinct special vertices lying in a common chamber. Then $(X_n)_{n\geq0}$ is irreducible on~$V_P$. 
\end{prop}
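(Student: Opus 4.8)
The plan is to reduce the probabilistic statement to a purely combinatorial connectivity statement, and then to an explicit lattice-generation fact in the Coxeter complex. Under the hypothesis, a single step of the walk has positive probability between any two distinct special vertices lying in a common chamber. Hence, introducing the graph $\cG$ on $V_P$ whose edges join distinct special vertices sharing a chamber, any edge-path in $\cG$ from $x$ to $y$ gives a route of positive probability; so irreducibility of $(X_n)_{n\ge0}$ follows once I show that $\cG$ is connected. Given $x,y\in V_P$, axiom (B1) provides an apartment $A$ containing both, and I may fix a type-preserving isomorphism $A\cong\Sigma$. Since chambers of $A$ are chambers of $\Delta$, an edge-path in $\cG$ inside $A$ is an edge-path in $\cG$; thus it suffices to prove that the special vertices of $\Sigma$ (that is, the coweight lattice $P$) are connected in $\cG$.

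Next I would pin down the local structure at the origin. The chambers of $\Sigma$ containing $0$ are exactly the $w\fc_0$ with $w\in W_0$, and the vertices of $\fc_0$ are $\{0\}\cup\{\omega_i/m_i\mid 1\le i\le r\}$, where $\varphi=m_1\alpha_1+\cdots+m_r\alpha_r$ is the highest root. The vertex $\omega_i/m_i$ is special precisely when $m_i=1$. Consequently the special vertices sharing a chamber with $0$ are exactly the elements of
$$
N=\bigcup_{\{i\mid m_i=1\}} W_0\,\omega_i.
$$
Because each $\lambda\in P$ acts on $\Sigma$ by the type-rotating automorphism $t_\lambda\colon x\mapsto x+\lambda$, which sends the star of $0$ to the star of $\lambda$, the special neighbours of $\lambda$ in $\cG$ are exactly $\lambda+N$. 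Therefore $\cG$ restricted to $\Sigma$ is the Cayley graph of the abelian group $(P,+)$ with connection set $N$ (note $N=-N$, since sharing a chamber is symmetric, or since $-\omega_i=w_0\omega_{i^*}$). This graph is connected if and only if $N$ generates $P$ as a group; the hypothesis that $\Delta$ is not of type $\tilde E_8$, $\tilde F_4$ or $\tilde G_2$ guarantees $N\neq\emptyset$, i.e.\ that at least one minuscule coweight exists.

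The heart of the matter, and the step I expect to be the main obstacle, is thus the lattice statement $\langle N\rangle=P$. The subgroup $\langle N\rangle$ is $W_0$-invariant, and for each minuscule $\omega_i$ and each root $\alpha$ it contains
$$
\omega_i-s_\alpha\omega_i=\langle\omega_i,\alpha\rangle\,\alpha^\vee ,
$$
which (taking $\alpha=\alpha_i$) already yields $\alpha_i^\vee\in\langle N\rangle$ whenever $m_i=1$. Since a nonzero $W_0$-orbit spans the irreducible reflection representation, $\langle N\rangle$ is a full-rank lattice, and I would show $Q\subseteq\langle N\rangle$ together with the standard fact (cf.\ \cite{bourbaki}) that the minuscule coweights represent all nonzero classes of $P/Q$; combining these gives $\langle N\rangle\supseteq Q+\sum_{m_i=1}\ZZ\omega_i=P$. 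The delicate point is the inclusion $Q\subseteq\langle N\rangle$: a single minuscule orbit need not suffice (for example in type $D_r$ the orbit of $\omega_1$ generates only $\ZZ^r$, and one needs the spin orbits as well), so one uses the full union $N$. I expect to establish $Q\subseteq\langle N\rangle$ uniformly from the $W_0$-invariance and the coroot identities above, with the residual verification reducing to a short case-check over the irreducible types admitting a minuscule coweight, completing the proof that $\cG$ is connected and hence that $(X_n)_{n\ge0}$ is irreducible on $V_P$.
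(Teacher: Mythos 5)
Your route is valid but genuinely different from the paper's. The paper never passes to an apartment: it fixes chambers $c\ni x$ and $d\ni y$ and inducts on the length of a minimal gallery $c=c_0\sim c_1\sim\cdots\sim c_k=d$, using the exclusion of $\tilde{E}_8,\tilde{F}_4,\tilde{G}_2$ only through the fact that every chamber then has at least two special vertices, so every panel --- in particular $c_{k-1}\cap d$ --- contains a special vertex $z'$; the walk then moves $x\to z\to z'\to y$ with positive probability, where $z$ is a special vertex of $c_{k-1}$ handled by induction. That argument takes a few lines, needs no root-system data, and works directly in $\Delta$. Your approach costs more machinery but buys more: it identifies the graph $\cG$ restricted to an apartment as the Cayley graph of $(P,+)$ with connection set $N=\bigcup_{\{i\mid m_i=1\}}W_0\,\omega_i$, a sharper structural statement than bare connectivity, and closely related to the adjacency structure on special vertices that the paper exploits again in the proof of Theorem~\ref{thm:roe0}. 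The intermediate claims you make (star of $0$, translation action of $P$, $N=-N$, nonemptiness of $N$ exactly when the type is not $\tilde{E}_8,\tilde{F}_4,\tilde{G}_2$) are all correct.

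The one real gap is the step you flag yourself: $Q\subseteq\langle N\rangle$, which you defer to a case-check over types. It can in fact be closed uniformly, with no classification. Since $\varphi=\sum_j m_j\alpha_j$ is the highest root, every positive root $\alpha=\sum_j c_j\alpha_j$ has $c_j\leq m_j$; so for a minuscule $\omega_i$ (i.e.\ $m_i=1$) one gets $\langle\omega_i,\alpha\rangle=c_i\in\{0,1\}$, and hence $\langle\mu,\alpha\rangle\in\{0,\pm1\}$ for \emph{all} $\mu\in W_0\omega_i$ and all roots $\alpha$, because $\langle w\omega_i,\alpha\rangle=\langle\omega_i,w^{-1}\alpha\rangle$. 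Now fix any root $\alpha$. The orbit $W_0\omega_i$ cannot lie in the hyperplane $H_\alpha$, since its span is a nonzero $W_0$-invariant subspace of the irreducible reflection representation $E$; so some $\mu\in W_0\omega_i$ has $\langle\mu,\alpha\rangle=\pm1$, and then $\mu-s_\alpha\mu=\langle\mu,\alpha\rangle\,\alpha^\vee=\pm\alpha^\vee$ lies in $\langle N\rangle$, as both $\mu$ and $s_\alpha\mu$ lie in $N$. Thus $\alpha^\vee\in\langle N\rangle$ for every root $\alpha$, i.e.\ $Q\subseteq\langle N\rangle$, and combined with the Bourbaki fact that the minuscule coweights represent the nonzero classes of $P/Q$ this yields $\langle N\rangle=P$. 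With that paragraph inserted your proof is complete; it is simply much heavier than the paper's gallery induction, which is the more economical argument here.
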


\begin{proof}
Suppose that $X_0=x$ and let $y\in V_P$ be a special vertex. We show that there exists $n>0$ 
such that $p^{(n)}(x,y)>0$. Let $c$ be a chamber of $\Delta$ containing $x$, and let $d$ 
be a chamber of $\Delta$ containing $y$. We argue by induction on the length of a minimal 
length gallery from~$c$ to $d$. If this distance is~$0$ then $c=d$, and so $p(x,y)>0$ by 
hypothesis. Let $c=c_0\sim c_1\sim\cdots\sim c_k=d$ be a minimal length gallery from $c$ to $d$, 
and let $z$ be a special vertex of $c_{k-1}$. By the induction hypothesis there is an $n>0$ 
such that $p^{(n)}(x,z)>0$. If $z$ and $y$ lie in a common chamber then 
$p^{(n+1)}(x,y)\geq p^{(n)}(x,z)p(z,y)>0$. If $z$ and $y$ do not lie in a common chamber 
then there is a special vertex $z'$ in the panel $c_{k-1}\cap d$ (since we exclude type 
$\tilde{E}_8$, $\tilde{F}_4$, and $\tilde{G}_2$). Then $z$ and $z'$ lie in $c_{k-1}$, and $z'$ and 
$y$ lie in $d$, and so $p^{(n+2)}(x,y)\geq p^{(n)}(x,z)p(z,z')p(z',y)>0$.
\end{proof}

\begin{remark}
Let $\Delta$ be an affine building of type other than $\tilde{E}_8$, $\tilde{F}_4$, or $\tilde{G}_2$. 
Define a graph structure on $V_P$ by declaring distinct vertices $x$ and $y$ to be adjacent if 
and only if they lie in a common chamber. The proof of Proposition~\ref{prop:irreduciblewalk} 
shows that the resulting graph is connected, and `nearest neighbour' walks according to our 
definition are simply nearest neighbour walks on this graph.
In the case that $\Delta$ has type $\tilde{E}_8$, $\tilde{F}_4$, or $\tilde{G}_2$ we can define 
the graph structure on $V_P$ by declaring distinct vertices $x$ and $y$ to be adjacent if and 
only if they lie in adjacent chambers. A similar proof shows that this graph is connected.
\end{remark}

\begin{defn}\label{def:ends}
Two sectors $\fs$ and $\fs'$ of $\Delta$ are \textit{equivalent} if their intersection 
$\fs\cap\fs'$ contains a sector. The set of equivalence classes of sectors is the 
\textit{combinatorial boundary} of $\Delta$, denoted~$\Omega$. We call the elements of 
$\Omega$ the \textit{ends} of $\Delta$, in analogy with the rank~$1$ case where 
$\Delta$ is a tree and $\Omega$ is the set of ends in the usual sense. 

The \textit{dominant end} $\varpi$ is the equivalence class of~$\fs_0$.
\end{defn}

\begin{defn}
For $x$ a special vertex and $\omega\in\Omega$, let $\fs_x^{\omega}$ denote the unique 
sector of $\Delta$ based at $x$ and in the class of the end~$\omega$ (see \cite{AB}). 
We say that a sequence $(x_n)_{n\geq 0}$ of points in $\Delta$ \textit{converges to the end} 
$\omega\in \Omega$ if the distance from the set $\fs_o^{\omega}\cap\fs_{x_n}^{\omega}$ 
to the boundary of the sector $\fs_o^{\omega}$ tends to infinity as $n$ tends to infinity. 
\end{defn}

In other words, the sequence $(x_n)_{n\geq 0}$ converges to the end $\omega$ if the set $\fs_o^{\omega}\cap \fs_{x_n}^{\omega}$ moves deeper and deeper 
into the sector $\fs_o^{\omega}$, and away from all the walls of this sector. It is 
immediate that this definition does not depend on the choice of base vertex~$o$.  
It can also be shown that the limit of a sequence, if it exists, is unique. That is, 
if $x_n\to\omega$ and $x_n\to\omega'$ in $\Omega$, then $\omega=\omega'$.

In the rank~$1$ case (where $\Delta$ is a tree), the combinatorial boundary and the 
visibility boundary coincide, and the respective notions of convergence agree. 
However, in higher rank the two notions of convergence are rather different. 
Suppose that $(x_n)_{n\geq 0}$ converges to the ideal point~$\xi$ in the visibility 
boundary~$\partial\Delta$. Let $\fr:[0,\infty)\to \Delta$ be the unique ray based 
at $o$ in the class~$\xi$. If the direction $\vect{d}\bigl(o,\fr(1)\bigr)$ of $\fr$ is a 
\textit{regular} vector (a vector $\lambda\in E$ is regular if 
$\langle\lambda,\alpha\rangle\neq 0$ for all $\alpha\in R$) then one can show that 
$(x_n)_{n\geq 0}$ also converges in the combinatorial boundary, and the limit point 
is the equivalence class of the unique sector based at $o$ containing the image 
of~$\fr$. On the other hand, if $\vect{d}\bigl(o,\fr(1)\bigr)$ is not regular then the 
sequence $(x_n)_{n\geq 0}$ may or may not converge in the combinatorial boundary. This 
occurs even in a thin building (consisting of a single apartment). 

\begin{lemma}\label{lem:recurrence}
Let $(X_n)_{n\geq 0}$ be an irreducible, drift-free, translation invariant random walk 
on the coweight lattice~$P$ with finite first moment. Then, with probability~$1$, 
the random walk $(X_n)_{n\geq 0}$ visits every hyperplane $H_{\alpha,m}$ 
(with $\alpha\in R$ and $m\in\mathbb{Z}$) infinitely often.
\end{lemma}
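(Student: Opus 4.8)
The plan is to fix a single pair $(\alpha,m)$ with $\alpha\in R$ and $m\in\ZZ$, to reduce the event that $X_n\in H_{\alpha,m}$ infinitely often to a one-dimensional recurrence statement, and finally to intersect over the countably many such pairs. First I would project the walk onto the $\alpha$-direction by setting $Y_n=\langle X_n,\alpha\rangle$. Since $(X_n)_{n\geq0}$ lives on the coweight lattice $P$ we have $\langle X_n,\alpha\rangle\in\ZZ$ for every $n$, so $Y_n$ is integer valued and $X_n\in H_{\alpha,m}$ exactly when $Y_n=m$. Because the walk is translation invariant its increments $X_{n+1}-X_n$ are independent and identically distributed, and therefore so are the increments $Y_{n+1}-Y_n=\langle X_{n+1}-X_n,\alpha\rangle$; thus $Y_n$ is itself a genuine random walk on $\ZZ$. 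The drift-free hypothesis gives that the common increment of $(X_n)$ has mean $0$, whence the increment of $Y_n$ has mean $0$ as well, and the finite first moment is inherited through $\mathbb{E}\,|\langle X_1-X_0,\alpha\rangle|\leq\|\alpha\|\,\mathbb{E}\,d(X_0,X_1)<\infty$.

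The crux of the argument is to verify that $Y_n$ genuinely ranges over all of $\ZZ$, rather than being confined to a proper subgroup or coset. Writing $\alpha=c_1\alpha_1+\cdots+c_r\alpha_r$ one has $\langle\omega_i,\alpha\rangle=c_i$, so $\langle P,\alpha\rangle=\gcd(c_1,\ldots,c_r)\,\ZZ$. Since every root is $W_0$-conjugate to a simple root, and $W_0$ acts on the root lattice by lattice automorphisms which preserve primitivity, every root is a primitive vector of the root lattice; hence $\gcd(c_1,\ldots,c_r)=1$ and the homomorphism $\langle\,\cdot\,,\alpha\rangle\colon P\to\ZZ$ is surjective. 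In particular $H_{\alpha,m}$ really does meet $P$ for every $m\in\ZZ$. Now irreducibility of $(X_n)_{n\geq0}$ on $P$ means that every element of $P$ is a finite sum of increments taken from the support of the increment distribution; applying the surjective homomorphism $\langle\,\cdot\,,\alpha\rangle$ shows that every integer is likewise a finite sum of increments of $Y_n$. Consequently $Y_n$ communicates between all states and is an irreducible Markov chain on the whole of $\ZZ$. This step is the main obstacle, as it is where both the primitivity of roots and the passage of irreducibility from $P$ to its one-dimensional projection are needed.

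It then remains to invoke recurrence. By the classical Chung--Fuchs theorem a one-dimensional random walk whose increment has mean $0$ is recurrent (the hypothesis $Y_n/n\to0$ in probability holds here by the weak law of large numbers). An irreducible recurrent Markov chain on a countable state space visits every state infinitely often almost surely, so with probability $1$ we have $Y_n=m$ infinitely often, that is, $X_n\in H_{\alpha,m}$ infinitely often. Finally, since there are only countably many pairs $(\alpha,m)$ with $\alpha\in R$ and $m\in\ZZ$, the intersection of these countably many probability-one events again has probability $1$, and on this event the walk $(X_n)_{n\geq0}$ visits every hyperplane $H_{\alpha,m}$ infinitely often, as required.
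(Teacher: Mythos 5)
Your proposal is correct and follows essentially the same route as the paper: project the walk onto $\ZZ$ via $\lambda\mapsto\langle\lambda,\alpha\rangle$, observe that the projection is a mean-zero, integer-valued random walk with finite first moment, and invoke classical one-dimensional recurrence. Your careful verification that the projected walk is irreducible on all of $\ZZ$ (via primitivity of roots, so that $\langle\cdot,\alpha\rangle\colon P\to\ZZ$ is surjective, combined with irreducibility of $(X_n)_{n\geq 0}$ on $P$) fills in a step that the paper leaves implicit when it concludes that recurrence of $(Y_n)_{n\geq 0}$ forces every level $m$ to be hit infinitely often.
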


\begin{proof}
Let $\sigma(\lambda)=p(0,\lambda)$.
Let $\alpha\in R$ be fixed, and let $Y_n=\langle X_n,\alpha\rangle$. Thus $(Y_n)_{n\geq 0}$ 
is a random walk on $\mathbb{Z}$, because $\langle \lambda,\alpha\rangle\in\mathbb{Z}$ for all 
$\lambda\in P$. The law of $(Y_n)_{n\geq 0}$ is $\tilde{\sigma}(m)=\sigma(H_{\alpha,m})$. 
Note that $Y_n=m$ if and only if $X_n\in H_{\alpha,m}$. The drift of the random 
walk~$(Y_n)_{n\geq 0}$ is
\begin{align*}
\sum_{m\in\mathbb{Z}}\tilde{\sigma}(m)\,m
&=\sum_{m\in\mathbb{Z}}\sum_{\;\,\lambda\in H_{\alpha,m}\cap P}\sigma(\lambda)\,m
=\sum_{\lambda\in P}\sigma(\lambda)\, \langle\lambda,\alpha\rangle
=\left\langle\sum_{\lambda\in P}\sigma(\lambda)\,\lambda\,,\,\alpha\right\rangle=0
\end{align*}
(since $X_n$ is drift-free), and this calculation also shows that $\tilde{\sigma}$ has finite first moment. 
Thus, by the classical theory $(Y_n)_{n\geq 0}$ is a recurrent random walk. 
Hence $(X_n)_{n\geq 0}$ visits each hyperplane $H_{\alpha,m}$ infinitely often with probability~$1$.
\end{proof}

\begin{thm}\label{thm:roe0} Let $\Delta$ be a locally finite thick regular affine building 
of type other than $\tilde{E}_8$, $\tilde{F}_4$, or $\tilde{G}_2$. Let $(X_n)_{n\geq 0}$ 
be a nearest neighbour semi-isotropic random walk on $\Delta$. Suppose that $\mu=0$ 
(with $\mu$ from Theorem~\ref{thm:roe}), and that the random walk 
$(\overline{X}_n)_{n\geq 0}$ is irreducible on~$P$ (this occurs, for example, if $(X_n)_{n\geq 0}$ 
is irreducible on $V_P$). 

Then $X_n$ converges almost surely in the sense of Definition \ref{def:ends} 
to the dominant end $\varpi$ of~$\Delta$.  
\end{thm}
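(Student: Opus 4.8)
The plan is to unwind Definition~\ref{def:ends} into a statement about a single ``confluent'' vertex, and then to prove that statement one wall-direction at a time, reducing each direction to the rank-one (tree) situation where the drift-free phenomenon is already understood. For each $n$ let $b_n$ be the base vertex of the intersection sector $\fs_o^\varpi\cap\fs_{X_n}^\varpi$. Since $\fs_o^\varpi=\fs_0\subseteq\Sigma$ we have $b_n\in\fs_0$, and the boundary of $\fs_0$ is the union of its facets, which lie on the walls $H_{\alpha_i,0}$. Hence the distance from $\fs_o^\varpi\cap\fs_{X_n}^\varpi$ to $\partial\fs_0$ equals $\min_{1\le i\le r}\langle b_n,\alpha_i\rangle/\|\alpha_i\|$, so Definition~\ref{def:ends} says that $X_n\to\varpi$ is \emph{equivalent} to
$$
\langle b_n,\alpha_i\rangle\longrightarrow\infty\qquad\textrm{almost surely, for each simple root }\alpha_i.
$$
The whole theorem reduces to this assertion. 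The first step is thus to establish this reformulation carefully, using Lemma~\ref{lem:config} to realise $\fs_o^\varpi$, $\fs_{X_n}^\varpi$ and their common deep subsector inside a single apartment when computing $b_n$.

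The essential difficulty is that $\langle b_n,\alpha_i\rangle$ is a \emph{confluent} (Gromov-product-type) quantity and is \emph{not} governed by $\vect h(X_n)$: indeed $\overline X_n=\vect h(X_n)$ is drift-free, so by Lemma~\ref{lem:recurrence} each coordinate $\langle\vect h(X_n),\alpha_i\rangle$ is a recurrent walk on $\ZZ$ and certainly does not tend to infinity. This is exactly the subtlety already present for trees: there the walk escapes to a boundary end even though the horocycle index performs a recurrent one-dimensional walk, and $\langle b_n,\alpha_i\rangle$ plays the role of the height of the confluent $X_n\wedge_\varpi o$ of $X_n$ and $o$ towards $\varpi$, which tends to infinity because the \emph{branch point} with the reference ray recedes. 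To exploit this I would, for each simple root $\alpha_i$, pass to the tree $T_i$ transverse to the parallel class of walls of direction $\alpha_i$ (two points identified when no wall of this class separates them, edges recording such walls; thickness guarantees $T_i$ branches). The building structure should give a nearest-neighbour projection $\pi_i\colon\Delta\to T_i$ carrying $(X_n)$ to a nearest-neighbour walk on $T_i$, carrying $\varpi$ to the dominant end $\varpi_i$ of $T_i$, and carrying $\langle b_n,\alpha_i\rangle$, up to a fixed positive factor, to the height of the confluent of $\pi_i(o)$ and $\pi_i(X_n)$.

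The hypothesis $\mu=0$ makes the height of $\pi_i(X_n)$ a drift-free walk on $\ZZ$, and irreducibility of $(\overline X_n)$ together with Lemma~\ref{lem:recurrence} make it recurrent; the rank-one analysis of the drift-free case in \cite{CKW} then shows that $\pi_i(X_n)$ converges almost surely to $\varpi_i$, that is, $\langle b_n,\alpha_i\rangle\to\infty$ almost surely. As there are only finitely many simple roots, intersecting these almost-sure events over $i=1,\dots,r$ yields $X_n\to\varpi$. Equivalently, one may argue that the transverse branching forces the walk to be transient, hence to converge to some end, and then use the recurrence of every coordinate $\langle\vect h(X_n),\alpha_i\rangle$ to exclude every non-dominant end, since approach to any $\omega\ne\varpi$ would force some $\langle\vect h(X_n),\alpha_i\rangle$ to be bounded above, contradicting $\limsup_n\langle\vect h(X_n),\alpha_i\rangle=\infty$.

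I expect the main obstacle to be the rank-one reduction itself: constructing the transverse tree $T_i$ and the projection $\pi_i$ from the building axioms, and—most delicately—proving that convergence of $\pi_i(X_n)$ to the dominant end of $T_i$ is genuinely equivalent to $\langle b_n,\alpha_i\rangle\to\infty$. This is where the drift-free subtlety lives: one must show that the base vertex $b_n$ recedes even though the retracted walk $\vect h(X_n)$ returns to every horocycle infinitely often, which forces one to combine the recurrence of Lemma~\ref{lem:recurrence} with thickness (to produce the transient transverse branching) rather than with any escape of $\vect h(X_n)$ itself.
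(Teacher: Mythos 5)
Your opening reduction is sound and in fact coincides with the paper's first step: convergence to $\varpi$ is reduced to showing that, for each simple root $\alpha_i$, a sector-base coordinate tends to infinity almost surely (the paper's quantity is $\pi(X_n)^i=\min\{k\in\ZZ\mid H_{\alpha_i,k}\cap\fs_{X_n}\neq\emptyset\}$; note that your ``base vertex $b_n$'' is not quite well defined, since $\fs_0\cap\fs_{X_n}$ need not be a translated sector --- constraints from non-simple positive roots can be essential, cf.\ Figure~\ref{fig:intersection} --- but this is a repairable slip). You have also correctly identified the probabilistic mechanism: recurrence of each coordinate $\langle\vect{h}(X_n),\alpha_i\rangle$ via Lemma~\ref{lem:recurrence}, combined with thickness-induced branching. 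The genuine gap is that the entire weight of your proof rests on the transverse-tree reduction, which you do not construct, and which cannot be cited away. Three things are missing: (i) the existence of the tree $T_i$ and a projection $\pi_i:\Delta\to T_i$ is substantial structure theory (Tits' trees associated to a wall at infinity, equivalently cross-sections of parallel sets of walls), not a one-line consequence of the building axioms, and it must be checked in the exotic low-rank buildings that are the very reason this theorem cannot be deduced from group-theoretic results; (ii) even granted $T_i$, the projected process $\pi_i(X_n)$ is not shown to be a Markov chain --- projections of Markov chains are generally not Markov, and the paper's factorization result (Proposition~\ref{prop:proj}) covers only the lattice projection $\vect{h}(X_n)$, not a tree projection; (iii) \cite{CKW} concerns random walks on the affine group of a tree, i.e.\ i.i.d.\ group increments, whereas here there may be no group at all, so the drift-free tree result you invoke does not apply to $\pi_i(X_n)$ even if it were Markov; proving it for semi-isotropic tree walks is essentially the rank-one case of the theorem itself. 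Your fallback argument has a further flaw: in rank $>1$, transience does not imply convergence to an end of the combinatorial boundary, and in the drift-free case the walk does not converge in the visibility boundary either, so ``converges to some end, then exclude the wrong ones'' has no starting point.

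For comparison, the paper does exactly the work your reduction defers, but directly in the building. It proves evolution rules for $Y_n^i=\pi(X_n)^i$ along a step $x\approx y$: if $\vect{h}(x)^i<\pi(x)^i$ then $\pi(y)^i=\pi(x)^i$; if $\vect{h}(x)^i=\pi(x)^i$ then $\pi(y)^i$ follows an up or level step of $\vect{h}^i$ deterministically, and follows a down step only with probability $q_i^{-1}$ --- this last point is where thickness and semi-isotropy enter, via a chamber-counting argument with the projection $\mathrm{proj}_c(x)$. It then introduces the stopping times $\bt(k)$ at which $\overline{X}_n^i$ hits the level $Y_n^i$, observes that the increments $Z_{\bt(k)}^i$ are i.i.d.\ with mean $(1-q_i^{-1})\,\mathbb{P}[\overline{X}_{n+1}^i-\overline{X}_n^i=1]>0$ (using that $\overline{X}^i$ is drift-free with $\pm1,0$ increments and $q_i\geq 2$), and concludes $Y_n^i\to\infty$ by the law of large numbers. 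Your i.i.d.-at-regeneration-times intuition is the right one, but to complete your proposal you would have to establish items (i)--(iii) above, and doing so amounts to proving precisely these geometric claims and this factorization --- at which point the detour through transverse trees buys nothing.
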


\begin{proof} In the proof we write points $\lambda\in\Sigma$ as vectors in terms of the basis 
$\omega_1,\ldots,\omega_r$ of fundamental coweights as $\lambda=(\lambda^1,\ldots,\lambda^r)$. 
That is, $\lambda=\lambda^1\omega_1+\cdots+\lambda^r\omega_r$, and 
$\lambda^i=\langle \lambda,\alpha_i\rangle$. 

We first set up some of the geometry needed for the proof. For special vertices $x\in V_P$, 
let $\fs_x$ be the unique sector of $\Delta$ based at $x$ in the class $\varpi=[\fs_0]$, and 
consider the intersection $\fs_x\cap \Sigma$. This set can be written as the intersection 
of half apartments, and thus is of the form
\begin{align}\label{eq:ks}
\Sigma\cap\fs_x=\bigcap_{\alpha\in R^+}H_{\alpha\,,\,k_{\alpha}}^+\qquad
\textrm{for some integers $k_{\alpha}$}.
\end{align}
Let $\pi(x)\in\Sigma$ be the point $\pi(x)=\bigl(\pi(x)^1,\ldots,\pi(x)^r\bigr)$, where
$$
\pi(x)^i=\min\{k\in\mathbb{Z}\mid  H_{\alpha_i\,,\,k}\cap\fs_x\neq\emptyset\}\qquad
\textrm{for each $1\leq i\leq r$}.
$$
Thus $\pi(x)^i=k_{\alpha_i}$, with $k_{\alpha}$ as in~(\ref{eq:ks}). These definitions 
are illustrated for type~$\tilde{A}_2$ in Figure~\ref{fig:intersection}. 

\begin{figure}[h]
\begin{center}
\begin{tikzpicture} [scale=1.2]
\path [fill=lightgray] (-1.3,3.464) -- (0,1.15) -- (1,1.15) -- (2.35,3.464) -- (-1.3,3.464);
\path [fill=lightgray] (-0.58,-1.732) -- (0.51,0.263) -- (1.65,-1.732);
\draw [thin, dashed] (-1.3,3.464) -- (0,1.15) -- (1,1.15) -- (2.35,3.464);
\draw [thin, dashed] (0,1.15) -- (1.65,-1.732);
\draw [thin, dashed] (1,1.15) -- (-0.58,-1.732);
\draw (-2,3.464) -- (1,-1.732);
\draw (-1,-1.732) -- (2,3.464);
\draw (-2.8,0) -- (3.2,0);
\node at (0.55,2.5) {$\Sigma\cap\fs_x$};
\node at (0.2,-1.5) {$\bullet$};
\node at (0.2,-1.15) {$\vect{h}(x)$};
\node at (1.1,0.4) {$\pi(x)$};
\node at (0.51,0.28) {$\bullet$};
\end{tikzpicture}
\caption{The intersection $\Sigma\cap\fs_x$ and the point $\pi(x)$}\label{fig:intersection}
\end{center}
\end{figure}

If $A$ is an apartment containing $\fs_x$ then $\vect{h}|_A:A\to \Sigma$ is the isomorphism 
fixing the intersection $A\cap \Sigma$, and since $x$ is the base point of $\fs_x\,$, we 
necessarily have that $\pi(x)-\vect{h}(x)$ is dominant. Thus $\vect{h}(x)^i\leq\pi(x)^i$ 
for all $1\leq i\leq r$ (and so $\vect{h}(x)$ lies in the lower shaded region in 
Figure~\ref{fig:intersection}).

If $x$ and $y$ are distinct special vertices lying in a common chamber we write $x\approx y$. 
We need to analyse  the connection between $\pi(x)$ and $\pi(y)$ for vertices $x\approx y$. 
It is easier to consider each component of these vectors separately, so let $1\leq i\leq r$ 
be fixed. We claim that for $x\approx y$,
\begin{enumerate}
\item If $\vect{h}(x)^i<\pi(x)^i$ then $\pi(y)^i=\pi(x)^i$.
\item If $\vect{h}(x)^i=\pi(x)^i$ and 
\begin{enumerate}
\item if $\vect{h}(y)^i=\vect{h}(x)^i+1$ then $\pi(y)^i=\pi(x)^i+1$.
\item if $\vect{h}(y)^i=\vect{h}(x)^i$ then $\pi(y)^i=\pi(x)^i$.
\item if $\vect{h}(y)^i=\vect{h}(x)^i-1$ then $\pi(y)^i\in\{\pi(x)^i-1,\pi(x)^i\}$.
\end{enumerate}
\end{enumerate}
In the last case, the proportion of the $y'\approx x$ with $\vect{h}(y')=\vect{h}(y)$ having $\pi(y')^i=\pi(x)^i-1$ is~$q_i^{-1}$.

Let us assume the above claims for now.  Let $\overline{X}_n=\vect{h}(X_n)$ and 
$Y_n=\pi(X_n)$, and write $Z_n=Y_{n+1}-Y_n$ for the increments of~$Y_n$. We are interested 
in the process $(Y_n)_{n\geq 0}$, and we show that each $Y_n^i\to\infty$ almost surely 
as~$n\to\infty$. This clearly implies that $X_n$ converges in the combinatorial boundary 
to the equivalence class of~$\fs_0$.

The process $(\overline{X}_n)_{n\geq 0}$ is a translation invariant drift free random walk 
on~$P$, and by Lemma~\ref{lem:recurrence} this random walk hits each fixed hyperplane 
$H_{\alpha,k}$ infinitely often almost surely. Thus for each $k\in\mathbb{Z}$ and each 
$1\leq i\leq r$ we have $\overline{X}_n^i=k$ for infinitely many $n\geq 0$ almost surely.

Let $1\leq i\leq r$ be fixed. We have $\overline{X}_n^i\leq Y_n^i$ for all $n\geq 0$. 
The claims above imply the following. If $\overline{X}_n^i<Y_n^i$ then $Z_n^i=0$,
while if $\overline{X}_n^i=Y_n^i$ then
\begin{equation} \label{eq:condi}
\begin{aligned}
\mathbb{P}[Z_n^i=1\mid \overline{X}_{n+1}^i=\overline{X}_n^i+1]&=1,&
\mathbb{P}[Z_n^i=0\mid \overline{X}_{n+1}^i=\overline{X}_n^i]&=1,\\
\mathbb{P}[Z_n^i=0\mid\overline{X}_{n+1}^i=\overline{X}_n^i-1]&=1-q_i^{-1},&
\mathbb{P}[Z_n^i=-1\mid\overline{X}_{n+1}^i=\overline{X}_n^i-1]&=q_i^{-1}.
\end{aligned}
\end{equation}
Thus $Y_n^i$ remains constant (in $n$) until $\overline{X}_n^i=Y_n^i$.
We can therefore define a sequence of stoping times $\bt(k)$ ($k \ge 0$) depending on $i$ by 
$$
\bt(0)=0 \quad \text{and} \quad \bt(k+1) = \inf\{ n > \bt(k) : \overline{X}_n^i=Y_n^i\}.
$$
Then $Y_n^i = Y_{\bt(k)+1}^i$ for all $k \ge 0$ and $\bt(k) < n \le \bt(k+1)$, so that
we can argue inductively: by recurrence, $\overline{X}_n^i$ must reach $Y_{\bt(k)+1}^i$
after time $\bt(k)$ with probability~1, whence $\bt(k+1)$ is almost surely finite. 
For $k \ge 1$, the random variables
$$
Z_{\bt(k)}^i = Y_{\bt(k)+1}^i - Y_{\bt(k)}^i = Y_{\bt(k+1)}^i - Y_{\bt(k)}^i 
= \overline{X}_{\bt(k+1)}^i - \overline{X}_{\bt(k)}^i 
$$ 
are independent. Using (\ref{eq:condi}), we easily compute the distribution of 
$Z_{\bt(k)}^i \in \{ -1,0,1\}$ via the law of total probability:
$$
\mathbb{P}[Z_{\bt(k)}^i = 1] = \mathbb{P}[\overline{X}_{\bt(k)+1}^i-\overline{X}_{\bt(k)}^i=1] 
\quad \text{and} \quad \mathbb{P}[Z_{\bt(k)}^i = -1] 
= q_i^{-1}\,\mathbb{P}[\overline{X}_{\bt(k)+1}^i-\overline{X}_{\bt(k)}^i=-1]\,,
$$ 
while of course $\mathbb{P}[Z_{\bt(k)}^i = 0]$ is the complementary probability.
Thus, the $Z_{\bt(k)}^i$ are i.i.d. 
Now, since $(\overline{X}_{n})_{n\geq0}$ is drift free, the projection $(\overline{X}_n^i)_{n\geq 0}$ is 
also drift free, and so $\mathbb{E}[\overline{X}_{n+1}^i-\overline{X}_n^i]=0$. These increments
are $\pm 1$ or $0$. Thus 
$\mathbb{P}[\overline{X}_{n+1}^i-\overline{X}_n^i=-1]=
\mathbb{P}[\overline{X}_{n+1}^i-\overline{X}_n^i=1]$, 
and by irreducibility this value is strictly positive. Since $q_i\geq 2$ (by thickness),
\begin{equation*}\
\mathbb{E}[Z_{\bt(k)}^i]= (1-q_i^{-1})\,\mathbb{P}[\overline{X}_{n+1}^i-\overline{X}_n^i=1] = e_i>0\,,
\end{equation*}
where $e_i>0$ is a constant not depending on~$n$. Therefore
$$
Y_n^i=\sum_{\{k:\bt(k) <n\}} Z_{\bt(k)}^i \to \infty \qquad \text{almost surely} 
$$ 
by the classical Law of Large Numbers, proving the theorem.

\smallskip

It remains to prove the geometric claims 1 and 2 made above. We will briefly sketch the 
proof of the most delicate part, case 2(c). The remaining cases are easier. 
Suppose that $x\approx y$ with $\vect{h}(x)^i=\pi(x)^i$ and $\vect{h}(y)^i=\vect{h}(x)^i-1$. 
The assumption that $\vect{h}(x)^i=\pi(x)^i$ says that the set $\Sigma\cap \fs_x$ is adjacent 
to a wall of the sector $\fs_x$, and the assumption that $\vect{h}(y)^i=\vect{h}(x)^i-1$ says 
that $y$ is on the `negative side' of this wall (in any apartment containing $\fs_x\cup\{y\}$, 
see Lemma~\ref{lem:config}). This is illustrated in Figure~\ref{fig:intersection1}.

\begin{figure}[h]
\begin{center}
\begin{tikzpicture} [scale=1.2]
\path [fill=lightgray] (-1.3,3.464) -- (0,1.15) -- (1,1.15) -- (2.35,3.464) -- (-1.3,3.464);
\path [fill=lightgray] (1.5,2) -- (2.3,2) -- (1.9,2.7);
\draw (-1.3,3.464) -- (0,1.15) -- (1,1.15) -- (2.35,3.464);
\draw (1,1.15) -- (-0.16,-1);
\draw (-2.65,3.464) -- (0.25,-1.732);
\draw (-0.16,-1) -- (0.65,-1);
\node at (0.55,2.5) {$\Sigma\cap\fs_x$};
\node at (-1.5,2.5) {$\fs_x$};
\node at (-0.16,-1) {$\bullet$};
\draw (0.65,-1) -- (0.22,-0.3);
\draw (0.65,-1) -- (0.25,-1.732);
\node at (0.25,-2) {$y$};
\draw (0.22,-0.3) -- (-0.55,-0.3);
\node at (-0.5,-1) {$x$};
\node at (0.25,-1.732) {$\bullet$};
\draw (1.5,2) -- (2.3,2) -- (1.9,2.7);
\node at (0.25,-0.75) {$c_1$};
\node at (-0.18,-0.5) {$c_0$};
\node at (0.25,-1.25) {$d$};
\node at (1.9,2.25) {$c$};
\end{tikzpicture}
\caption{Evolution of $\pi(x)$}\label{fig:intersection1}
\end{center}
\end{figure}

Let $d$ be a chamber containing $x$ and $y$ (this chamber exists by hypothesis $x\approx y$). 
Let $c_0$ be the base chamber of $\fs_x$, and let $c_0\sim c_1\sim\cdots\sim c_k=d$ be a minimal 
length gallery from $c_0$ to $d$. Let $c$ be a chamber of the base apartment~$\Sigma$ adjacent to 
the region $\Sigma\cap \fs_x$, as illustrated. Then $c\in \fs_y$ if and only if the chamber $c_1$ 
is equal to the projection $\mathrm{proj}_c(x)$ of the chamber $c$ onto the simplex~$x$ 
(see \cite[Proposition~4.95]{AB}). There are $q_i$ chambers adjacent to $c_0$ via the panel 
$c_0\cap c_1$, whence the result.
\end{proof}

\medskip

\noindent\begin{minipage}{0.5\textwidth}
 James Parkinson\newline
School of Mathematics and Statistics\newline
University of Sydney\newline
Carslaw Building, F07\newline
NSW, 2006, Australia\newline
\texttt{jamesp@maths.usyd.edu.au}
\end{minipage}
\begin{minipage}{0.5\textwidth}
\noindent Wolfgang Woess\newline
Institut f\"{u}r Mathematische Strukturthorie\newline
Technische Universit\"{a}t Graz\newline
Steyrergasse 30\newline
8010 Graz, Austria\newline
\texttt{woess@TUGraz.at}
\end{minipage}

\end{document}